\theoremstyle{plain}
\newtheorem{theorem}{Theorem}[section]
\newtheorem{proposition}[theorem]{Proposition}
\newtheorem{lemma}[theorem]{Lemma}
\newtheorem{conjecture}[theorem]{Conjecture}
\theoremstyle{definition}
\newtheorem{definition}[theorem]{Definition}
\DeclareMathOperator{\Spec}{\operatorname{Spec}}
\DeclareMathOperator{\Gr}{\operatorname{Gr}}
\DeclareMathOperator{\diag}{\operatorname{diag}}
\DeclareMathOperator{\Hom}{\operatorname{Hom}}
\DeclareMathOperator{\Tr}{\operatorname{Tr}}
\DeclareMathOperator{\Sk}{\operatorname{Sk}}
\DeclareMathOperator{\End}{\operatorname{End}}
\begin{document}

\title{Skein algebras and quantized Coulomb branches}
\author{Dylan G.L. Allegretti and Peng Shan}

\date{}

\maketitle

\begin{abstract}
To a compact oriented surface of genus at most one with boundary, we associate a quantized $K$-theoretic Coulomb branch in the sense of Braverman, Finkelberg, and Nakajima. In~the case where the surface is a three- or four-holed sphere or a one-holed torus, we describe a relationship between this quantized Coulomb branch and the Kauffman bracket skein algebra of the surface. We formulate a general conjecture relating these algebras.
\end{abstract}

\setcounter{tocdepth}{1}
\tableofcontents

\section{Introduction}

Character varieties of surfaces are fundamental objects in modern mathematics, appearing in low-dimensional topology, representation~theory, and mathematical~physics, among other areas. Given a reductive algebraic group~$G$, the $G$-character variety of a surface is an affine scheme parametrizing principal $G$-bundles with flat connection on the surface. In the case where $G=\mathrm{SL}_2$, the character variety has a well known quantization given by a noncommutative algebra called the Kauffman~bracket skein~algebra~\cite{BFK99,PS00,T91}. In this paper, we propose a new approach to studying the $\mathrm{SL}_2$-character variety of a surface and its quantization defined by the skein~algebra.

Our perspective is based on ideas coming from mathematical physics, specifically the study of four-dimensional $N=2$ field theories of class~S. These are quantum field theories, first constructed by Gaiotto~\cite{G12} and famously studied by Gaiotto, Moore, and Neitzke~\cite{GMN13a,GMN13b}, whose properties are encoded in the geometry of surfaces. In particular, one of the important insights of these authors was that the Coulomb~branch of a class~S theory compactified on a circle should be closely related to a character variety of the surface that defines the theory.

The notion of a Coulomb branch originates in physics, where it functions as a kind of parameter space for a supersymmetric quantum field theory. In a groundbreaking series of papers, Braverman, Finkelberg, and Nakajima have introduced a rigorous mathematical definition of this object for a certain class of theories~\cite{BFN18,BFN19,N16}. Their approach is to first define its coordinate ring as a homology group with a convolution product and then take the spectrum and study its geometric properties. The present paper is concerned with a variant of this construction in which the coordinate ring is constructed by taking $K$-theory rather than homology. The resulting $K$-theoretic Coulomb branch has a natural quantization, described explicitly in some cases in~\cite{FT19a,FT19b}.

Starting from a compact oriented surface of genus at most one with boundary, we show that the methods of Braverman, Finkelberg, and Nakajima can be used to define an associated quantized $K$-theoretic Coulomb~branch. In the special case where the surface is a three- or four-holed sphere or a one-holed torus, we relate this quantized Coulomb~branch to the skein algebra of the surface. Based on our observations, we conjecture a general relationship between these algebras. We expect that the relationship we describe here between skein algebras and quantized Coulomb~branches will provide powerful new geometric, categorical, and field theoretic tools for studying character varieties of surfaces.

\subsection{Two quantized algebras}

Let us describe in more detail the noncommutative algebras that we wish to relate. To any compact oriented surface~$S$, we can associate the $\mathrm{SL}_2$-character variety $\mathcal{M}_{\mathrm{flat}}(S,\mathrm{SL}_2)$. A complex point of this character variety is a principal $\mathrm{SL}_2(\mathbb{C})$-bundle with flat connection on~$S$. Given such a bundle with connection, we get a group homomorphism $\rho:\pi_1(S)\rightarrow\mathrm{SL}_2(\mathbb{C})$ sending the homotopy class of a loop~$\gamma$ on~$S$ to the monodromy of the flat connection around~$\gamma$. In this way, we identify the set of complex points with a space of homomorphisms $\rho:\pi_1(S)\rightarrow\mathrm{SL}_2(\mathbb{C})$ up to conjugation. For each homotopy class of loops~$\gamma$ on~$S$, there is a regular function~$\Tr_\gamma$ on the character variety mapping a homomorphism $\rho$ to~$\Tr\rho(\gamma)$. These functions generate the coordinate ring of~$\mathcal{M}_{\mathrm{flat}}(S,\mathrm{SL}_2)$ as an algebra.

When $S$ has nonempty boundary, it is natural to consider the subscheme of $\mathcal{M}_{\mathrm{flat}}(S,\mathrm{SL}_2)$ that we get by prescribing the conjugacy class of the monodromy around each boundary component of~$S$. The conjugacy class of a semisimple $2\times2$ matrix of determinant one with complex coefficients is determined by its trace. If $\gamma_1,\dots,\gamma_n$ are curves freely homotopic to the boundary components of~$S$, we therefore specify a vector $\boldsymbol{\lambda}=(\lambda_1,\dots,\lambda_n)\in(\mathbb{C}^*)^n$ and consider the closed subscheme $\mathcal{M}_{\mathrm{flat}}^{\boldsymbol{\lambda}}(S,\mathrm{SL}_2)\subset\mathcal{M}_{\mathrm{flat}}(S,\mathrm{SL}_2)$ cut out by the relations 
\[
\Tr_{\gamma_i}=\lambda_i+\lambda_i^{-1}, \quad i=1,\dots,n.
\]
The numbers~$\lambda_i^{\pm1}$ thus specify the eigenvalues of the monodromy around~$\gamma_i$.

The Kauffman bracket skein algebra~$\Sk_A(S)$ is a noncommutative algebra that quantizes the $\mathrm{SL}_2$-character variety. As we will explain in Section~\ref{sec:TheKauffmanBracketSkeinAlgebra}, it is a $\mathbb{C}[A^{\pm1}]$-algebra generated by isotopy classes of framed links in the three-manifold $S\times[0,1]$, subject to the same skein relations used to define the Kauffman bracket in knot theory. Specializing the variable $A$ to~$-1$, we get an isomorphism 
\[
\Sk_A(S)\otimes_{\mathbb{C}[A^{\pm1}]}\left(\mathbb{C}[A^{\pm1}]/(A+1)\right)\cong\mathcal{O}(\mathcal{M}_{\mathrm{flat}}(S,\mathrm{SL}_2))
\]
with the coordinate ring of the $\mathrm{SL}_2$-character variety. When the surface~$S$ has nonempty boundary, we will be interested in a variant of the skein algebra denoted $\Sk_{A,\boldsymbol{\lambda}}(S)$. The latter is an algebra over a Laurent polynomial ring $\mathbb{C}[A^{\pm1},\lambda_1^{\pm1},\dots,\lambda_n^{\pm1}]$. By specializing $A$ to~$-1$ and specializing the variables~$\lambda_i$ to nonzero complex numbers, we recover the coordinate ring of the subscheme $\mathcal{M}_{\mathrm{flat}}^{\boldsymbol{\lambda}}(S,\mathrm{SL}_2)\subset\mathcal{M}_{\mathrm{flat}}(S,\mathrm{SL}_2)$.

Our goal in this paper is to relate the skein algebra to an associated quantized Coulomb~branch. In~general, a Coulomb~branch is a space that physicists associate to a compact Lie~group~$G_{\mathrm{cpt}}$, called the gauge group, and a quaternionic representation~$M$ of~$G_{\mathrm{cpt}}$. There are by now several approaches to defining these spaces mathematically under various additional assumptions~\cite{BDFRT22,BFN18,T22}. In the present paper, we adopt the original approach of Braverman, Finkelberg, and Nakajima~\cite{BFN18}. Namely, we replace $G_{\mathrm{cpt}}$ by its complexification~$G$ and assume that the representation~$M$ is of cotangent type, that is, $M\cong N\oplus N^*$ for some complex representation~$N$. Under this assumption, Braverman, Finkelberg, and Nakajima give a precise mathematical definition of the Coulomb branch as an affine scheme.

Their approach involves a space $\mathcal{R}_{G,N}$ called the variety of triples. It is an ind-scheme closely related to the affine Grassmannian of~$G$. We review its definition in Section~\ref{sec:QuantizedKTheoreticCoulombBranches}. In the examples that we consider, there is an algebraic torus~$F$, called a flavor symmetry group, acting on~$N$ so that this vector space~$N$ becomes a representation of~$\widetilde{G}=G\times F$. There is a natural action of the group~$\widetilde{G}_\mathcal{O}$ of $\mathcal{O}$-valued points of~$\widetilde{G}$ on the variety of triples where $\mathcal{O}=\mathbb{C}\llbracket z\rrbracket$. Following the approach of~\cite{BFN18,VV10}, one can define the $\widetilde{G}_{\mathcal{O}}$-equivariant $K$-theory $K^{\widetilde{G}_{\mathcal{O}}}(\mathcal{R}_{G,N})$ of the variety of triples. It has a convolution product~$*$ making it into a commutative algebra, and the $K$-theoretic Coulomb~branch of~$(\widetilde{G},N)$ is defined as the spectrum 
\[
\mathcal{M}_{\mathrm{C}}(\widetilde{G},N)=\Spec\left(K^{\widetilde{G}_{\mathcal{O}}}(\mathcal{R}_{G,N}),*\right).
\]
As we will review below, there is a natural $\mathbb{C}^*$-action on the variety of triples $\mathcal{R}_{G,N}$ by loop rotation, and so we can consider the equivariant $K$-theory $K^{\widetilde{G}_{\mathcal{O}}\rtimes\mathbb{C}^*}(\mathcal{R}_{G,N})$. There is again a convolution product on this space, which makes it into a noncommutative algebra over $K^{\mathbb{C}^*}(\mathrm{pt})\cong\mathbb{C}[q^{\pm1}]$ called the quantized $K$-theoretic Coulomb branch of~$(\widetilde{G},N)$, and one recovers the algebra $K^{\widetilde{G}_{\mathcal{O}}}(\mathcal{R}_{G,N})$ by specializing $q$ to~1. We replace~$\mathbb{C}^*$ by its double cover and thus extend scalars to~$\mathbb{C}[q^{\pm\frac{1}{2}}]$.

\subsection{Summary of the main results}

Let us consider the surface~$S=S_{g,n}$ obtained from a closed oriented surface of genus~$g$ by removing $n$ open disks with embedded disjoint closures. We assume that $S$ has negative Euler~characteristic $\chi(S)=2-2g-n<0$. In Section~\ref{sec:ProofsOfTheMainResults}, we explain how to associate, to any pants decomposition of~$S$, corresponding gauge and flavor symmetry groups 
\[
G\cong\mathrm{SL}_2(\mathbb{C})^{3g-3+n}, \quad F\cong(\mathbb{C}^*)^n.
\]
There is a representation $M$ of $\widetilde{G}=G\times F$, defined as a direct sum of subspaces $M_P\cong\mathbb{C}^8$ for each pair of pants $P$ in the pants decomposition. This construction was essentially described in~\cite{G12}, though we replace the gauge group $\mathrm{SU}(2)^{3g-3+n}$ used there by its complexification. When the surface has genus $g\leq1$, we show that there exists a pants decomposition for which the corresponding representation $M$ is of cotangent type, that is, $M\cong N\oplus N^*$ for a complex representation $N$ of~$G$. Such a pants decomposition therefore determines an associated quantized Coulomb branch by the construction of Braverman, Finkelberg, and Nakajima~\cite{BFN18}.

The simplest example is when $S=S_{0,3}$ is a three-holed sphere. In this case the skein algebra of~$S$ is commutative. There is a unique pants decomposition with a single pair of pants, and we prove the following result.

\begin{theorem}
\label{thm:introS03main}
Let $(\widetilde{G},N)$ be the group and representation associated to the three-holed sphere~$S_{0,3}$. Then there is a $\mathbb{C}$-algebra isomorphism 
\[
\Sk_{A,\boldsymbol{\lambda}}(S_{0,3})\cong K^{\widetilde{G}_{\mathcal{O}}\rtimes\mathbb{C}^*}(\mathcal{R}_{G,N}).
\]
\end{theorem}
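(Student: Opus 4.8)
The plan is to prove Theorem~\ref{thm:introS03main} by computing both sides explicitly and exhibiting a matching presentation by generators and relations. For the three-holed sphere~$S_{0,3}$, the gauge group is trivial, $G\cong\mathrm{SL}_2(\mathbb{C})^{0}=\{1\}$, and the flavor group is $F\cong(\mathbb{C}^*)^3$, acting on $N\cong\mathbb{C}^4$ (so that $M=N\oplus N^*\cong\mathbb{C}^8$). With $G$ trivial, the variety of triples $\mathcal{R}_{G,N}$ degenerates: the affine Grassmannian of the trivial group is a point, and $\mathcal{R}_{G,N}$ becomes simply the vector space $N_\mathcal{O}=N\llbracket z\rrbracket$, on which $\widetilde{G}_\mathcal{O}\rtimes\mathbb{C}^*=F_\mathcal{O}\rtimes\mathbb{C}^*$ acts. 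The first step is therefore to identify $K^{\widetilde{G}_\mathcal{O}\rtimes\mathbb{C}^*}(\mathcal{R}_{G,N})$ concretely. Since $N_\mathcal{O}$ is (equivariantly) contractible to the origin, its equivariant $K$-theory is that of a point, $K^{F_\mathcal{O}\rtimes\mathbb{C}^*}(\mathrm{pt})\cong K^{F}(\mathrm{pt})\otimes\mathbb{C}[q^{\pm\frac12}]\cong\mathbb{C}[\lambda_1^{\pm1},\lambda_2^{\pm1},\lambda_3^{\pm1},q^{\pm\frac12}]$, where the $\lambda_i$ are the coordinates on the character lattice of~$F$. I would check carefully that the convolution product here reduces to the ordinary tensor product, so that the Coulomb branch algebra on the right-hand side is simply this Laurent polynomial ring.

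The second step is to recall (from Section~\ref{sec:TheKauffmanBracketSkeinAlgebra}, which the excerpt promises) the presentation of $\Sk_{A,\boldsymbol\lambda}(S_{0,3})$. The three-holed sphere carries three boundary-parallel curves $\gamma_1,\gamma_2,\gamma_3$ and no other essential simple closed curves up to isotopy, and it retracts onto a wedge of two circles; any framed link in $S_{0,3}\times[0,1]$ can be reduced by the Kauffman skein relations to a product of the three boundary loops. Thus $\Sk_A(S_{0,3})\cong\mathbb{C}[A^{\pm1}][x_1,x_2,x_3]$ is a commutative polynomial ring on the three boundary curves (this is the classical computation of Bullock--Przytycki), and passing to $\Sk_{A,\boldsymbol\lambda}$ adjoins the invertible central parameters $\lambda_i$ and imposes $x_i=\lambda_i+\lambda_i^{-1}$, so that $\Sk_{A,\boldsymbol\lambda}(S_{0,3})\cong\mathbb{C}[A^{\pm1},\lambda_1^{\pm1},\lambda_2^{\pm1},\lambda_3^{\pm1}]$. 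Comparing with Step~1, both algebras are Laurent polynomial rings in $\lambda_1,\lambda_2,\lambda_3$ over a one-variable Laurent ring, so the desired isomorphism is obtained by matching $A$ with $q^{\frac12}$ (up to the sign conventions used in the paper) and matching the $\lambda_i$ on both sides. One should verify that the $\mathbb{C}^*$-loop-rotation equivariant parameter $q$ indeed corresponds under this dictionary to $A^{2}$ (or $A^{\pm2}$), which is the expected relationship between the Coulomb branch quantum parameter and the Kauffman variable; tracking this normalization precisely is the one genuinely delicate bookkeeping point.

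The main obstacle is not the algebra but the \emph{identifications}: namely, pinning down exactly which torus $F$ and which weights of $N$ arise from the pants decomposition of $S_{0,3}$, and then matching the resulting $K$-theory variables with the boundary-monodromy eigenvalues $\lambda_i$ appearing in the skein-algebra side. This requires unwinding the construction of $(\widetilde G,N)$ from Section~\ref{sec:ProofsOfTheMainResults} in the base case, checking that $F\cong(\mathbb{C}^*)^3$ acts on $N\cong\mathbb{C}^4$ with the four weights $(\pm1,\pm1,\pm1)$ of even sign-product (the ``half-hypermultiplet'' of the trinion theory), and confirming that the natural coordinates on $\mathrm{Rep}(F)$ are precisely the $\lambda_i$, not $\lambda_i^2$ or some reparametrization. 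Once this dictionary is fixed, the isomorphism is immediate from the two presentations; the remainder of the argument is routine verification that the convolution product on $K^{F_\mathcal{O}\rtimes\mathbb{C}^*}(N_\mathcal{O})$ is commutative and agrees with multiplication of Laurent polynomials, which follows from the general properties of the Braverman--Finkelberg--Nakajima construction recalled in Section~\ref{sec:QuantizedKTheoreticCoulombBranches} in the special case of trivial gauge group.
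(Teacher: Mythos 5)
Your proposal is correct and follows essentially the same route as the paper: identify $\mathcal{R}_{G,N}\cong N_{\mathcal{O}}$ for the trivial gauge group, reduce its equivariant $K$-theory to $K^{(\mathbb{C}^*)^3\rtimes\mathbb{C}^*}(\mathrm{pt})$ (the paper does this via the Thom isomorphism applied to $\mathbb{C}^{4d}\to\mathrm{pt}$, which is the precise justification for your contractibility step), and observe that $\Sk_{A,\boldsymbol{\lambda}}(S_{0,3})\cong\mathbb{C}[A^{\pm1},\lambda_1^{\pm1},\lambda_2^{\pm1},\lambda_3^{\pm1}]$ since the boundary curves generate and are set equal to $-(\lambda_i+\lambda_i^{-1})$. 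The only inessential difference is that you worry about matching the $F$-weights of $N$ and the normalization of $q$ versus $A$, which the paper's statement (an abstract $\mathbb{C}$-algebra isomorphism of two four-variable Laurent polynomial rings) does not require.
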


A more nontrivial example is when $S=S_{0,4}$ is a four-holed sphere. In this case the skein algebra of~$S$ is known to be closely related to the spherical DAHA of type~$(C_1^\vee,C_1)$; see~\cite{BS16,BS18,H19,O04,T13}. The associated class~S quantum field theory is a fundamental example of an $N=2$ field theory, which was famously studied by Seiberg and~Witten in~\cite{SW94}.

\begin{theorem}
\label{thm:introS04main}
Let $(\widetilde{G},N)$ be the group and representation associated to a pants decomposition of the four-holed sphere~$S_{0,4}$. Then there is a $\mathbb{C}$-algebra isomorphism 
\[
\Sk_{A,\boldsymbol{\lambda}}(S_{0,4})\cong K^{\widetilde{G}_{\mathcal{O}}\rtimes\mathbb{C}^*}(\mathcal{R}_{G,N}).
\]
\end{theorem}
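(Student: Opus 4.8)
We sketch the strategy. By the construction recalled in Section~\ref{sec:ProofsOfTheMainResults}, a pants decomposition of $S_{0,4}$ consists of two pairs of pants glued along a single curve, so that $G\cong\mathrm{SL}_2(\mathbb{C})$, $F\cong(\mathbb{C}^*)^4$, and $N\cong\mathbb{C}^2\otimes\mathbb{C}^4$ with $G$ acting on the first factor and $F$ rescaling the four coordinates of the second---the data of $\mathrm{SL}_2$ gauge theory with four fundamental hypermultiplets. The plan is to compare explicit generators-and-relations presentations of the two sides. First I would compute the quantized $K$-theoretic Coulomb branch $\mathcal{A}:=K^{\widetilde{G}_{\mathcal{O}}\rtimes\mathbb{C}^*}(\mathcal{R}_{G,N})$. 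Since $G$ has rank one, the abelianization/localization technique of Braverman--Finkelberg--Nakajima~\cite{BFN18} is especially transparent: $\mathcal{A}$ embeds into a localization of the torus version of the $K$-theory at the root, and it is generated over its Cartan part $\mathbb{C}[q^{\pm\frac12},\lambda_1^{\pm1},\dots,\lambda_4^{\pm1}][\mathsf{w}+\mathsf{w}^{-1}]$ (with $W=\mathbb{Z}/2$ acting by $\mathsf{w}\mapsto\mathsf{w}^{-1}$) by finitely many bare and dressed minuscule monopole operators. One then writes down explicitly their $q$-commutation relations with $\mathsf{w}+\mathsf{w}^{-1}$ and the products of monopole operators; each of the latter is a Laurent polynomial in $\mathsf{w},q^{\pm\frac12},\lambda_i^{\pm1}$ whose essential ingredient is a ``matter factor'' $\prod_{i=1}^{4}(\cdots)$ running over the four hypermultiplets and their flavor weights. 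The dressed-monopole formalism of Finkelberg--Tsymbaliuk~\cite{FT19a,FT19b} makes these computations routine.

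Next I would invoke the known presentation of the skein algebra of the four-holed sphere (due to Bullock--Przytycki): $\Sk_A(S_{0,4})$ is generated over $\mathbb{C}[A^{\pm1}]$ by the four central boundary loops and the three interior curves $X_{12},X_{23},X_{13}$ that separate the boundary components into pairs, subject to three ``$q$-commutator'' relations cyclic in $X_{12},X_{23},X_{13}$ together with one cubic relation (the quantum Fricke relation). The algebra $\Sk_{A,\boldsymbol{\lambda}}(S_{0,4})$ is obtained by adjoining $\lambda_1^{\pm1},\dots,\lambda_4^{\pm1}$ and sending the $i$-th boundary loop to $-(\lambda_i+\lambda_i^{-1})$, so that it is generated over $\mathbb{C}[A^{\pm1},\lambda_i^{\pm1}]$ by $X_{12},X_{23},X_{13}$ alone.

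I would then define the isomorphism $\Phi\colon\Sk_{A,\boldsymbol{\lambda}}(S_{0,4})\to\mathcal{A}$ on generators: identify $A$ with $\pm q^{\pm\frac12}$ and $\lambda_i$ with the matching flavor parameters, and send, up to explicit normalizations, $X_{12}$ to $\mathsf{w}+\mathsf{w}^{-1}$, $X_{13}$ to a symmetrized minuscule monopole operator, and $X_{23}$ to a suitable dressed combination. The three cyclic $q$-commutator relations then reduce to the Braverman--Finkelberg--Nakajima commutation relations between the Cartan part and the monopole operators, and the cubic quantum Fricke relation must match the relation extracted from the monopole products and the Cartan relations. Surjectivity of $\Phi$ is then immediate from the rank-one presentation of $\mathcal{A}$. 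For injectivity one compares sizes: both algebras are free $\mathbb{C}[q^{\pm\frac12},\lambda_i^{\pm1}]$-modules admitting PBW-type bases (monomials $X_{13}^{a}X_{12}^{b}X_{23}^{c}$ on one side; monomials in the monopole operators and $\mathsf{w}+\mathsf{w}^{-1}$ on the other) which $\Phi$ matches up to a unitriangular change. Alternatively one specializes $q\to1$ (equivalently $A\to-1$), where the claim becomes the classical isomorphism between $\mathcal{O}(\mathcal{M}_{\mathrm{flat}}^{\boldsymbol{\lambda}}(S_{0,4},\mathrm{SL}_2))$---the Fricke cubic surface---and $\mathcal{O}(\mathcal{M}_{\mathrm{C}}(\widetilde{G},N))$, and lifts the conclusion using flatness over $\mathbb{C}[q^{\pm\frac12}]$.

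The main obstacle is the explicit $K$-theoretic monopole computation and the verification that, after the substitution, the matter factor $\prod_{i=1}^{4}(\cdots)$ expands into exactly the quantum Fricke cubic with the correct coefficients---in particular the right dependence on $\lambda_i+\lambda_i^{-1}$ and on $A^{\pm2}=q^{\pm1}$---which amounts to carefully reconciling the normalization conventions of~\cite{BFN18} (symmetrizers, the sign and orientation of monopole operators, the passage to $q^{\pm\frac12}$) with the skein-theoretic ones. A secondary point is to confirm that the pants-decomposition data of Section~\ref{sec:ProofsOfTheMainResults} yields the gauge group $\mathrm{SL}_2$ rather than $\mathrm{PGL}_2$, i.e.\ the monopole lattice matching the skein generators; with the other choice one would obtain an algebra built from an index-two sublattice. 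As a consistency check, both sides should be recognizable as the spherical DAHA of type $(C_1^\vee,C_1)$ with suitably matched parameters, as in~\cite{BS16,BS18,H19,O04,T13}.
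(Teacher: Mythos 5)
Your setup is correct ($G=\mathrm{SL}_2$, $F\cong(\mathbb{C}^*)^4$, $N$ the four-fundamental representation), and your overall strategy---present both algebras and match generators and relations---is viable, but it is genuinely different from the paper's route. The paper never verifies the quantum Fricke cubic inside the Coulomb branch directly. Instead it embeds the skein algebra into the spherical DAHA of type $(C_1^\vee,C_1)$ (Proposition~\ref{prop:skein04DAHACCiso}, which is where the Bullock--Przytycki presentation is used) and then realizes \emph{both} algebras faithfully inside $\End\mathbb{C}_{q,\mathbf{t}}[X^{\pm1}]^{\mathbb{Z}_2}$: the skein side via the Askey--Wilson polynomial representation (Proposition~\ref{prop:polyrepS04}), and the Coulomb side via abelianization/localization composed with an explicit difference-operator realization of $\mathcal{D}_{q,\mathbf{z}}^{\mathbb{C}^*}/(z_{k,1}z_{k,2}-1,w_1w_2-1)$ (Lemma~\ref{lem:relateambientringsS04}). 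The isomorphism is then the identity on images, the passage from $\mathrm{GL}_2$ to $\mathrm{SL}_2$ being handled separately by Lemma~\ref{lem:KSL2GL2}. This buys two things your outline must pay for by hand: injectivity is automatic (no PBW comparison or flatness/specialization argument is needed --- note that your $q\to1$ lift is delicate since $\mathbb{C}[q^{\pm\frac12}]$ is not local and an isomorphism at one fibre does not propagate without extra input), and the relation-checking reduces to comparing two explicit $q$-difference operators rather than expanding monopole products against the cubic relation.

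The concrete gap is your claim that ``surjectivity of $\Phi$ is then immediate from the rank-one presentation of $\mathcal{A}$.'' No such finite presentation is off the shelf. What Proposition~\ref{prop:generate} (via Finkelberg--Tsymbaliuk and Weekes) gives is that the Coulomb branch is generated by the dressed minuscule monopole operators $E_1[f]$, $F_1[f]$ for \emph{all} symmetric Laurent polynomial dressings $f$, together with all symmetric Laurent polynomials in the $w_r$ --- infinitely many generators. Reducing these to the image of three skein generators is the real content of Lemma~\ref{lem:S04surjective}: one needs the commutators $[E_1[x^m],w_1^{\pm1}+w_2^{\pm1}]$, the Finkelberg--Tsymbaliuk relation for $[E_1[x^m],F_1[x^n]]$, and the dressing-shift identities to rewrite everything in terms of $F_1[x^n]E_1[1]$ and $F_1[x^n]E_1[x^{-1}]$, and then one must identify these with the images of the infinite family of curve operators $\gamma_n$ (Lemma~\ref{lem:imageS04curves}) --- including a division by $A^4-1$ that has to be justified inside the unlocalized algebra. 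Your proposal does not address how higher dressings are absorbed, and without that step the map you construct lands only in a subalgebra of $K^{\widetilde{G}_{\mathcal{O}}\rtimes\mathbb{C}^*}(\mathcal{R}_{G,N})$. Your secondary points (the $\mathrm{SL}_2$ versus $\mathrm{PGL}_2$ lattice issue and the DAHA consistency check) are well taken and are indeed reflected in the paper's Lemma~\ref{lem:KSL2GL2} and Section~2.
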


A third important example is when $S=S_{1,1}$ is a one-holed torus. In this case the skein algebra is closely related to the spherical DAHA of type~$A_1$; see~\cite{H19,S19}. The associated class~S field theory is another a fundamental example of an $N=2$ quantum field theory. It is the so called 4D~$N=2^*$ theory studied by Donagi and Witten in~\cite{DW94}. We refer to~\cite{GKNPS23} for a very detailed modern treatment of this theory and its relation to the DAHA.

\begin{theorem}
\label{thm:introS11main}
Let $(\widetilde{G},N)$ be the group and representation associated to a pants decomposition of the one-holed torus~$S_{1,1}$. Then there is a $\mathbb{Z}_2$-action on $\Sk_{A,\lambda}(S_{1,1})$ and a $\mathbb{C}$-algebra isomorphism 
\[
\Sk_{A,\lambda}(S_{1,1})^{\mathbb{Z}_2}\cong K^{\widetilde{G}_{\mathcal{O}}\rtimes\mathbb{C}^*}(\mathcal{R}_{G,N}).
\]
\end{theorem}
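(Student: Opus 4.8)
The plan is to make both algebras explicit and match them directly; since everything here has rank one, this is feasible.

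\emph{Identifying the data.} Unwinding the construction of Section~\ref{sec:ProofsOfTheMainResults}, the one-holed torus $S_{1,1}$ has, up to the mapping class group, a single pants decomposition: cut along one non-separating simple closed curve $\gamma$ to obtain a pair of pants $P$, two of whose cuffs are then glued together. The glued cuff carries the gauge group $G\cong\mathrm{SL}_2(\mathbb{C})$, the free cuff carries the flavor torus $F\cong\mathbb{C}^*$, and the gluing identifies two of the three $\mathrm{SL}_2$-factors acting on $M_P\cong\mathbb{C}^2\otimes\mathbb{C}^2\otimes\mathbb{C}^2$ with the gauge $\mathrm{SL}_2$ while restricting the third to $F$. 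Writing $U=\mathbb{C}^2$ for the standard representation of $G$ and $\mathbb{C}_\lambda$ for the weight-one character of $F$, one finds, using $U\otimes U\cong\mathrm{Sym}^2 U\oplus\mathbb{C}=\mathfrak{sl}_2\oplus\mathbb{C}$,
\[
M\ \cong\ (\mathfrak{sl}_2\oplus\mathbb{C})\otimes\mathbb{C}_\lambda\ \oplus\ (\mathfrak{sl}_2\oplus\mathbb{C})\otimes\mathbb{C}_{\lambda^{-1}}
\]
as a $\widetilde{G}$-representation, so $N\cong(\mathfrak{sl}_2\oplus\mathbb{C})\otimes\mathbb{C}_\lambda$. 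A summand of $N$ on which $G$ acts trivially only contributes an equivariantly contractible factor to $\mathcal{R}_{G,N}$ and hence leaves $K^{\widetilde{G}_{\mathcal{O}}\rtimes\mathbb{C}^*}(\mathcal{R}_{G,N})$ unchanged; thus the quantized Coulomb branch in question is that of the rank-one $N=2^*$ pair $(\mathrm{SL}_2,\ \mathfrak{sl}_2\otimes\mathbb{C}_\lambda)$.

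\emph{The Coulomb branch.} Next I would present $\mathcal{A}:=K^{\widetilde{G}_{\mathcal{O}}\rtimes\mathbb{C}^*}(\mathcal{R}_{G,N})$ by the rank-one abelianization procedure of Braverman--Finkelberg--Nakajima in its $K$-theoretic form~\cite{FT19a,FT19b} (or by a direct computation): $\mathcal{A}$ embeds into the localized abelianized algebra $K^{\widetilde{T}_{\mathcal{O}}\rtimes\mathbb{C}^*}(\mathcal{R}_{T,N})_{\mathrm{loc}}$, a localization of the $q$-difference ring $\mathbb{C}[q^{\pm\frac12},\lambda^{\pm1}]\langle a^{\pm1},u^{\pm1}\rangle/(ua-qau)$, and its image is the subalgebra generated by the Weyl-invariant class $\mathsf{K}:=a+a^{-1}=[U]\in R(\mathrm{SL}_2)$ together with the dressed monopole operators $\mathsf{E},\mathsf{F}$ attached to $\pm\alpha^\vee$. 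Computing the convolution product then yields explicit $q$-commutation relations among $\mathsf{K},\mathsf{E},\mathsf{F}$ and factorizations $\mathsf{E}\mathsf{F}=p(\mathsf{K})$, $\mathsf{F}\mathsf{E}=\bar p(\mathsf{K})$ into Laurent polynomials in $\mathsf{K}$ over $\mathbb{C}[q^{\pm\frac12},\lambda^{\pm1}]$ whose zeros are read off from the $T$-weights $\lambda^{\pm1}a^{\pm2},\lambda^{\pm1}$ of $M$ and the root $a^{\pm2}$ of $G$. One recognizes the resulting algebra as a standard presentation, essentially that of the spherical double affine Hecke algebra of type $A_1$.

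\emph{Matching with the skein algebra.} On the skein side, by the Bullock--Przytycki presentation $\Sk_A(S_{1,1})$ is generated over $\mathbb{C}[A^{\pm1}]$ by the three simple closed curves $x,y,z$ subject to $Axy-A^{-1}yx=(A^2-A^{-2})z$ and its cyclic rotations together with the ``cubic'' relation writing the peripheral curve as a degree-$\le 3$ polynomial in $x,y,z$; and $\Sk_{A,\lambda}(S_{1,1})$ adjoins $\lambda^{\pm1}$ and sets the peripheral curve equal to a prescribed Laurent polynomial in $\lambda$. Since $z$ is recovered from the commutation relation, $\Sk_{A,\lambda}(S_{1,1})$ is generated by $x,y$, where we take $x$ to be the gluing curve $\gamma$. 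The $\mathbb{Z}_2$-action is the involution $\sigma$ reflecting the $Z(\mathrm{SL}_2)=\{\pm1\}$-ambiguity in self-gluing a pair of pants: concretely $\sigma$ fixes $A$ and $\lambda$, fixes $x=\gamma$, and negates the two curves $y,z$ meeting $\gamma$ --- equivalently $\sigma$ is the operation of twisting local systems by the class in $H^1(S_{1,1};\mathbb{Z}_2)$ Poincar\'e dual to $[\gamma]$ (which vanishes on $\gamma$ and on the boundary and is nonzero on curves meeting $\gamma$ oddly). One checks that $\sigma$ preserves all defining relations, and that $\Sk_{A,\lambda}(S_{1,1})^{\mathbb{Z}_2}$ is generated over $\mathbb{C}[A^{\pm1},\lambda^{\pm1}]$ by $x$ together with the $\sigma$-invariant quadratic expressions in $y,z$ (such as $y^2$, $yz$, $zy$), the relations being obtained from those of $\Sk_{A,\lambda}(S_{1,1})$ by standard invariant-theoretic reduction. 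The isomorphism with $\mathcal{A}$ is exhibited by explicit assignments of generators in both directions --- $x\mapsto$ a normalization of $\mathsf{K}$, the quadratic invariants $\mapsto$ suitable $\mathbb{C}[A^{\pm1},\lambda^{\pm1}]$-combinations of $\mathsf{K},\mathsf{E},\mathsf{F},\mathsf{E}\mathsf{F},\mathsf{F}\mathsf{E}$, and conversely $\mathsf{K}\mapsto$ affine in $x$, $\mathsf{E},\mathsf{F}\mapsto$ quadratic expressions in $y,z$ --- under a fixed identification of the coefficient rings $\mathbb{C}[A^{\pm1},\lambda^{\pm1}]\cong\mathbb{C}[q^{\pm\frac12},\lambda^{\pm1}]$, after which one checks on the finitely many defining relations that the two assignments are mutually inverse ring homomorphisms. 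Alternatively one matches the classical limits $A,q\to-1$, where the claim becomes $\mathcal{O}\bigl(\mathcal{M}_{\mathrm{flat}}^{\lambda}(S_{1,1},\mathrm{SL}_2)\bigr)^{\mathbb{Z}_2}\cong\mathcal{O}\bigl(\mathcal{M}_{\mathrm{C}}(\widetilde{G},N)\bigr)$, i.e. an identification of the coordinate ring of the cubic character surface (modulo $\mathbb{Z}_2$) with that of the classical Coulomb branch, and upgrades it using the compatible filtrations by monopole charge on $\mathcal{A}$ and by degree in the transverse curves $y,z$ on $\Sk_{A,\lambda}(S_{1,1})^{\mathbb{Z}_2}$.

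\emph{Main obstacle.} The principal difficulty is the second step: determining $p,\bar p$ and the $q$-commutators on the nose requires careful evaluation of the Euler-class (self-intersection) contributions on the $\mathrm{Gr}_{\mathrm{SL}_2}$-strata in equivariant $K$-theory, followed by fixing the precise dictionary among $A$, $\lambda$, and $q^{\frac12}$. A secondary subtlety is confirming that it is exactly this $\mathbb{Z}_2$ --- rather than one of the other sign involutions in $H^1(S_{1,1};\mathbb{Z}_2)\cong\mathbb{Z}_2\times\mathbb{Z}_2$ --- whose invariants match $\mathcal{A}$ on the nose; here the quotient-free cases $S_{0,3}$ and $S_{0,4}$, where the two glued cuffs lie in distinct pairs of pants, serve as a useful consistency check.
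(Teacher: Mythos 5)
Your identification of the data $(\widetilde{G},N)$, of the $\mathbb{Z}_2$-action (fix $\alpha$, negate $\beta$ and $\gamma$), of the generators $\alpha,\beta^2,\gamma\beta,\gamma^2$ of the invariant subalgebra, and of the spherical DAHA of type $A_1$ as the mediating object all agree with the paper. But the two steps you yourself flag as the "main obstacle" are where the proof actually lives, and as written they contain a genuine gap. Your plan is to present $K^{\widetilde{G}_{\mathcal{O}}\rtimes\mathbb{C}^*}(\mathcal{R}_{G,N})$ by generators \emph{and relations} ($\mathsf{K},\mathsf{E},\mathsf{F}$ with $q$-commutators and $\mathsf{E}\mathsf{F}=p(\mathsf{K})$, $\mathsf{F}\mathsf{E}=\bar p(\mathsf{K})$) and then "check the finitely many defining relations in both directions." No such presentation of the quantized Coulomb branch is available from the cited sources: the Finkelberg--Tsymbaliuk results give \emph{generation} by dressed minuscule monopole operators for $\mathrm{GL}_n$-type quivers, not a complete set of relations, and certainly not for $\mathrm{SL}_2$ gauge group. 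Moreover, for $G=\mathrm{SL}_2$ the coweights $\pm\alpha^\vee$ are Weyl-conjugate and the orbit $\Gr^{\alpha^\vee}$ is quasi-minuscule (its closure contains the base point), so there are no separate operators "$\mathsf{E},\mathsf{F}$ attached to $\pm\alpha^\vee$" and the algebra is not of generalized-Weyl type over $\mathbb{C}[\mathsf{K}]$; one needs a family of dressed operators supported on a single orbit closure, which is exactly why the invariant subalgebra $\Sk_{A,\lambda}(S_{1,1})^{\mathbb{Z}_2}$ (with its several quadratic generators $\beta^2,\gamma\beta,\gamma^2$) appears rather than something with two generators beyond $\alpha$.

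The paper circumvents both difficulties. For the structure of the Coulomb branch it works with the Jordan quiver, i.e.\ $\mathrm{GL}_2$ acting on $\Hom(\mathbb{C}^2,\mathbb{C}^2)$, where the generation theorem (Proposition~\ref{prop:generate}, after~\cite{FT19b}) applies to honest minuscule operators $E_1[f]$, $F_1[f]$, and then descends to $\mathrm{SL}_2$ via Lemma~\ref{lem:KSL2GL2} by passing to the degree-zero part and imposing the determinant relations. For the isomorphism it never needs relations for the Coulomb branch: both algebras are embedded into a common ring of $q$-difference operators $\End\mathbb{C}_{q,t}(X)$ --- the skein algebra via the \emph{faithful} polynomial representation of $\mathcal{SH}_{q,t}(A_1)$ (Propositions~\ref{prop:skeinS11DAHAAiso} and~\ref{prop:polyrepA}), the Coulomb branch via abelianization (Lemma~\ref{lem:relateambientringsS11}) --- and the generators are matched inside that common ring (Lemma~\ref{lem:explicitembeddingS11}). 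Injectivity is then inherited from faithfulness of the polynomial representation, and surjectivity is a separate direct argument (Lemma~\ref{lem:S11surjective}) showing every word in dressed monopole operators reduces to the images of $\alpha,\beta^2,\gamma\beta,\gamma^2$ and the curves $\gamma_n$. If you want to salvage your route, you must either prove that your list of relations is complete (essentially as hard as the theorem), or replace the "mutually inverse assignments" step by an injectivity-plus-surjectivity argument of the above kind; the classical-limit-and-filtration alternative you mention could also work but needs the flatness/filtration comparison to be carried out, not just invoked.
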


The appearance of the $\mathbb{Z}_2$-invariant subalgebra in Theorem~\ref{thm:introS11main} is consistent with predictions coming from the physics literature; see Section~8.4 of~\cite{GMN13b} and Section~4 of~\cite{GKNPS23}. We note that in each of our three theorems, the isomorphism identifies the quantization parameter $q$ in the quantized Coulomb branch with the element $A^{-2}$ in the skein algebra. We therefore get a corresponding isomorphism between classical limits of the algebras. We also note that the proofs of our Theorems~\ref{thm:introS04main} and~\ref{thm:introS11main} are based on the representation theory of spherical~DAHAs. Thus our work makes contact with a number of earlier papers relating DAHA to Coulomb~branches~\cite{BEF20,KN18}, equivariant $K$-theory~\cite{BFM05,VV10,V05}, and the skein algebras of various surfaces~\cite{AS19,CS21,GJV23,H19}.

\subsection{Conjecture and generalizations}

Based on our observations above, we propose the following conjecture on the relationship between skein~algebras and quantized Coulomb~branches.

\begin{conjecture}
\label{conj}
Let $S=S_{g,n}$ be a surface of genus $g\leq1$ with $n>0$ boundary components, and let $(\widetilde{G},N)$ be the group and representation associated to a pants decomposition of~$S$ as above. Then 
\begin{enumerate}
\item If $g=0$ there is a $\mathbb{C}$-algebra isomorphism 
\[
\Sk_{A,\boldsymbol{\lambda}}(S)\cong K^{\widetilde{G}_{\mathcal{O}}\rtimes\mathbb{C}^*}(\mathcal{R}_{G,N}).
\]
\item\label{conj:genus1} If $g=1$ there is a $\mathbb{Z}_2$-action on~$\Sk_{A,\boldsymbol{\lambda}}(S)$ and a $\mathbb{C}$-algebra isomorphism 
\[
\Sk_{A,\boldsymbol{\lambda}}(S)^{\mathbb{Z}_2}\cong K^{\widetilde{G}_{\mathcal{O}}\rtimes\mathbb{C}^*}(\mathcal{R}_{G,N}).
\]
\end{enumerate}
\end{conjecture}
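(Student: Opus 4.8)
The plan is to verify the conjecture directly in the three cases where both sides have been computed explicitly in the body of the paper, namely $S_{0,3}$, $S_{0,4}$, and $S_{1,1}$, since Theorems~\ref{thm:introS03main}, \ref{thm:introS04main}, and \ref{thm:introS11main} are precisely the statements of the conjecture in those cases. For a general surface $S_{g,n}$ with $g\le 1$ and $n>0$, the strategy would be to build up both algebras from the ``pair of pants'' building blocks and show that the gluing operations match. On the skein side one uses the fact that cutting $S$ along the curves of a pants decomposition expresses $\Sk_{A,\boldsymbol{\lambda}}(S)$ in terms of the skein algebras $\Sk_{A,\boldsymbol{\mu}}(S_{0,3})$ of the individual pairs of pants, glued along the boundary curves (with the boundary-parameter of a glued pair of curves identified), together with the extra generators coming from loops that cross the cutting curves. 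On the Coulomb side, the representation $N$ for $S_{g,n}$ is by construction $\bigoplus_P N_P$ over the pairs of pants $P$, and the gauge group $G\cong \mathrm{SL}_2^{3g-3+n}$ has one $\mathrm{SL}_2$ factor per internal curve; one would like a corresponding ``gluing'' statement for quantized $K$-theoretic Coulomb branches expressing $K^{\widetilde G_{\mathcal O}\rtimes\mathbb C^*}(\mathcal R_{G,N})$ as an amalgamated construction over the Coulomb branches of the individual $(\mathrm{SL}_2,N_P)$ pieces.

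The key steps, in order, would be: (1) establish the three base cases, which is already done in the main theorems of the paper; (2) prove a factorization/gluing lemma for $\Sk_{A,\boldsymbol{\lambda}}$ under the operation of gluing two boundary circles of a surface (or two surfaces along boundary circles), identifying the relevant boundary parameter $\lambda$ with the $\mathrm{SL}_2$ eigenvalue variable on the new internal curve; (3) prove the analogous gluing statement for quantized Coulomb branches --- concretely, that gluing along a curve corresponds to passing from a flavor $\mathbb{C}^*$ acting on the relevant summand of $N$ to a gauge $\mathrm{SL}_2$ (``gauging a flavor symmetry''), and that this operation on $(\widetilde G, N)$ induces on $K$-theory the same amalgamation appearing on the skein side; (4) combine (2), (3), and the base cases by induction on the number of pairs of pants $3g-3+n$ (equivalently on $-\chi(S)$), being careful that for $g=1$ one of the building blocks is effectively a one-holed torus, which is why the $\mathbb{Z}_2$-invariants appear in part~\eqref{conj:genus1}. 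The $\mathbb{Z}_2$ in the genus-one case arises from the mapping-class/elliptic involution of $S_{1,1}$ and, correspondingly, from a residual discrete symmetry of the Coulomb branch data; one would need to check that this $\mathbb{Z}_2$ is compatible with all gluings, i.e.\ that it does not interfere with attaching further pairs of pants.

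The main obstacle I expect is step~(3): there is no ``off the shelf'' gluing theorem for $K$-theoretic Coulomb branches under gauging that is fine enough to produce an \emph{algebra} isomorphism matching the skein side on the nose. Coulomb branches are known to behave well under taking products of gauge groups and under Hamiltonian reduction / gauging in the context of symplectic duality, but translating ``glue two boundary circles of a surface'' into a precise statement about $K^{\widetilde G_{\mathcal O}\rtimes\mathbb C^*}(\mathcal R_{G,N})$ --- and showing the resulting presentation agrees with the known presentation of the glued skein algebra, including the $q\leftrightarrow A^{-2}$ identification --- is exactly the content that is not currently available in the literature. This is presumably why the authors state the general case as a conjecture rather than a theorem: the building blocks and the two base-case computations are in hand, but a uniform gluing formalism reconciling the convolution algebra structure with the skein-theoretic cutting/gluing is the missing ingredient. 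A secondary difficulty is purely combinatorial: even granting a gluing lemma, one must check that different pants decompositions of the same $S$ (related by the moves of the Hatcher--Thurston complex) give canonically isomorphic Coulomb-branch algebras, matching the fact that $\Sk_{A,\boldsymbol{\lambda}}(S)$ is manifestly independent of the decomposition.
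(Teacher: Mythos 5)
The statement you were asked about is stated in the paper as a conjecture, and the paper itself offers no proof of it beyond the three special cases $S_{0,3}$, $S_{0,4}$, $S_{1,1}$ (Theorems~\ref{thm:S03main}, \ref{thm:S04main}, \ref{thm:S11main}) together with heuristic evidence. Your proposal correctly recognizes this: step~(1) is exactly what the paper establishes, and your candid admission that steps~(2)--(4) --- in particular a gluing/gauging theorem for quantized $K$-theoretic Coulomb branches compatible with skein-theoretic cutting --- are not available is accurate. So there is no ``paper proof'' to measure you against; what you have written is a research program with an acknowledged gap, not a proof, and the gap you name (step~(3)) is indeed the essential one.

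Two points of comparison with what the paper actually does. First, the evidence the authors give in Section~\ref{sec:EvidenceFortheMainConjecture} is of a different flavor from your gluing strategy: they point to a match between canonical bases, namely Thurston's positive bases of the skein algebra parametrized by Dehn--Thurston coordinates of multicurves, and the Cautis--Williams bases of the quantized Coulomb branch parametrized by $(\Lambda\times\Lambda^\vee)/W=(\mathbb{Z}^m\times(2\mathbb{Z})^m)/\{\pm1\}$, with the identification of parameter sets going back to Drukker--Morrison--Okuda. A basis-matching approach is a plausible alternative to your inductive gluing and would sidestep the need for a gauging formalism, though it has its own difficulties (one must verify the structure constants agree). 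Second, your description of the $\mathbb{Z}_2$ in part~\eqref{conj:genus1} as coming from the elliptic involution of $S_{1,1}$ is not what the paper intends: the elliptic involution acts trivially on isotopy classes of curves, hence trivially on the skein algebra. The paper's $\mathbb{Z}_2$ acts by $\beta\mapsto-\beta$ on the generator corresponding to a curve meeting the pants curve once, i.e.\ by a sign depending on the parity of an intersection number; its role is precisely to force the twist coordinate along the pants curve to be even, matching the coweight lattice $(2\mathbb{Z})^m$ of $\mathrm{SL}_2^m$. Any attempt to carry out your step~(4) would need this corrected description of the involution, and would also need to check at each stage of the induction that the intermediate pants decompositions still yield representations of cotangent type, which is the reason the conjecture is restricted to $g\leq1$ in the first place.
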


In Section~\ref{sec:EvidenceFortheMainConjecture}, we discuss the expected properties of these isomorphisms in more detail and describe further evidence supporting this conjecture.

There is an interesting potential application of these ideas to categorification of skein algebras. In~\cite{T14}, D.~Thurston showed that the classical limit of the skein algebra possesses a canonical basis with positive structure constants. In Question~1.5 of the same paper, he asked whether this positivity property could be explained by the existence of a categorification of the skein algebra. One possible approach to this question was described in~\cite{Q22}. Alternatively, one can consider the derived category of $\widetilde{G}_{\mathcal{O}}\rtimes\mathbb{C}^*$-equivariant coherent sheaves on the variety of triples~$\mathcal{R}_{G,N}$. It is a triangulated monoidal category which categorifies the quantized Coulomb branch in the sense that its Grothendieck ring is the algebra $K^{\widetilde{G}_{\mathcal{O}}\rtimes\mathbb{C}^*}(\mathcal{R}_{G,N})$. Our results imply that this category provides a categorification of the skein algebra in some cases. The work of Cautis and Williams~\cite{CW23} provides powerful categorical methods for studying canonical bases of quantized Coulomb branches and establishing their positivity properties. We will discuss this in detail in a forthcoming paper.

One can also attempt to generalize our results beyond Conjecture~\ref{conj}. For~example, one can consider character varieties associated to higher genus surfaces with possibly irregular punctures and more general groups, and one can attempt to relate these to $K$-theoretic Coulomb branches. We believe that such generalizations will most likely involve representations of noncotangent type. In a forthcoming paper, we plan to study a relationship between the $\mathrm{GL}_n$-character variety of a surface and an associated Coulomb~branch. In this setting, we also expect to obtain interesting extensions of our results involving the full DAHA, and not just the spherical subalgebra considered in the present paper.

\subsection*{Acknowledgements.}
We thank Haimiao~Chen, Renaud~Detcherry, Stephen~Doty, Sergei~Gukov, Hiraku~Nakajima, Du~Pei,  Dan~Xie, and Wenbin~Yan for helpful discussions and answers to questions. PS is supported by NSFC Grant No.~12225108.

\section{The Kauffman bracket skein algebra}
\label{sec:TheKauffmanBracketSkeinAlgebra}

In this section, we define the Kauffman~bracket skein~algebra of a surface and describe this algebra explicitly when the surface is a four-holed sphere or a one-holed torus.

\subsection{The $\mathrm{SL}_2$-character variety}

Recall that if $S$ is a connected smooth manifold and $G$ is a Lie group, then a $G$-local system on~$S$ is a principal $G$-bundle on~$S$ with flat connection. To any such local system, we can associate a monodromy representation $\pi_1(S)\rightarrow G$, and in this way, we obtain a natural bijection between the set of isomorphism classes of $G$-local systems on~$S$ and conjugacy classes of group homomorphisms $\pi_1(S)\rightarrow G$.

In this paper, we will be interested in the case where $S$ is a compact oriented smooth surface with boundary and $G=\mathrm{SL}_2(\mathbb{C})$. In this case, the fundamental group $\pi_1(S)$ is finitely presented, and the set $\Hom(\pi_1(S),\mathrm{SL}_2)$ has the structure of an algebraic variety called the \emph{$\mathrm{SL}_2$-representation variety} of~$S$. The group $\mathrm{SL}_2$ acts on this variety by conjugation.

\begin{definition}
The \emph{$\mathrm{SL}_2$-character variety} of $S$ is the affine GIT~quotient 
\[
\mathcal{M}_{\mathrm{flat}}(S,\mathrm{SL}_2)\coloneqq\Hom(\pi_1(S),\mathrm{SL}_2)\sslash\mathrm{SL}_2
\]
of the representation variety by the conjugation action of~$\mathrm{SL}_2$.
\end{definition}

If $\Bbbk$ is any field, then a $\Bbbk$-point of the representation variety is the same as a group homomorphism $\rho:\pi_1(S)\rightarrow\mathrm{SL}_2(\Bbbk)$. If $\gamma\in\pi_1(S)$ then we get a matrix $\rho(\gamma)\in\mathrm{SL}_2(\Bbbk)$, and the trace $\Tr_\gamma(\rho)\coloneqq\Tr\rho(\gamma)\in\Bbbk$ is invariant under conjugation by~$\mathrm{SL}_2(\Bbbk)$. It is also invariant when we change the orientation of~$\gamma$ or the basepoint used to define the fundamental group. Thus we can associate, to any free homotopy class~$\gamma$ of unoriented closed curves on~$S$, a regular function $\Tr_\gamma$ on the character variety. It is known that these functions generate the coordinate ring of $\mathcal{M}_{\mathrm{flat}}(S,\mathrm{SL}_2)$.

We will say that a simple closed curve $\gamma\subset S$ is \emph{peripheral} if it is freely homotopic in~$S$ to a component of~$\partial S$. Suppose $\partial_1,\dots,\partial_n$ are the boundary components of~$S$, and write~$\gamma_i$ for the homotopy class of peripheral curves corresponding to the boundary component~$\partial_i$. To formulate our results, we will need to consider the closed subscheme of the character variety obtained by prescribing the values of the functions~$\Tr_{\gamma_i}$. More precisely, for any vector $\boldsymbol{\lambda}=(\lambda_1,\dots,\lambda_n)\in\mathbb{C}^n$, we define the \emph{relative character variety} to be the subscheme $\mathcal{M}_{\mathrm{flat}}^{\boldsymbol{\lambda}}(S,\mathrm{SL}_2)\subset\mathcal{M}_{\mathrm{flat}}(S,\mathrm{SL}_2)$ cut out by the relations 
\[
\Tr_{\gamma_i}=\lambda_i+\lambda_i^{-1}, \quad i=1,\dots,n.
\]
In the special case where $n=1$ and $\boldsymbol{\lambda}=(\lambda)$, we can simply write $\mathcal{M}_{\mathrm{flat}}^{\lambda}(S,\mathrm{SL}_2)$ for the relative character variety.

\subsection{The skein algebra}

Consider any three-dimensional smooth manifold~$M$. By a \emph{link} in $M$, we will mean a compact unoriented one-dimensional submanifold $L\subset M$ without boundary. By a \emph{framing} for a link $L\subset M$, we will mean a continuous map $L\rightarrow TM$ assigning a nonzero tangent vector $v_p\in T_pM$ to each point $p\in L$ so that $v_p\not\in T_pL$. A link equipped with a framing will be called a \emph{framed link}. By an \emph{isotopy} of two framed links we will mean an ambient isotopy of the underlying links in~$M$ that preserves their framings.

Now let $S$ be an oriented smooth surface. We will be interested in the associated three-manifold defined as the product $S\times[0,1]$. A link $L\subset S\times[0,1]$ in this three-manifold is said to have the \emph{blackboard framing} if, for every point $p\in L$, the associated framing vector $v_p\in T_p(S\times[0,1])$ is tangent to the $[0,1]$ factor and points towards~1. Given any isotopy class of framed links in $S\times[0,1]$ we can find a representative framed link $L\subset S\times[0,1]$ that has the blackboard framing. We can then describe the framed link by drawing its projection to~$S$ and, if two points of~$L$ project to the same point of~$S$, indicating the ordering of their $[0,1]$-coordinates. Below we will always draw pictures of framed links in this way.

Let us write $\mathcal{L}_A(S)$ for the $\mathbb{C}[A^{\pm1}]$-module freely generated by the isotopy classes of framed links in~$S\times[0,1]$. This module can be equipped with a natural bilinear product operation defined as follows. Given framed links $L_1$,~$L_2\subset S\times[0,1]$, we rescale the $[0,1]$-coordinates so that these links lie in $S\times(0,\frac{1}{2})$ and $S\times(\frac{1}{2},1)$, respectively. Then the product $L_1L_2$ is defined as the union of the rescaled links in $S\times[0,1]$. This operation is well defined on isotopy classes and gives~$\mathcal{L}_A(S)$ the structure of a $\mathbb{C}[A^{\pm1}]$-algebra.

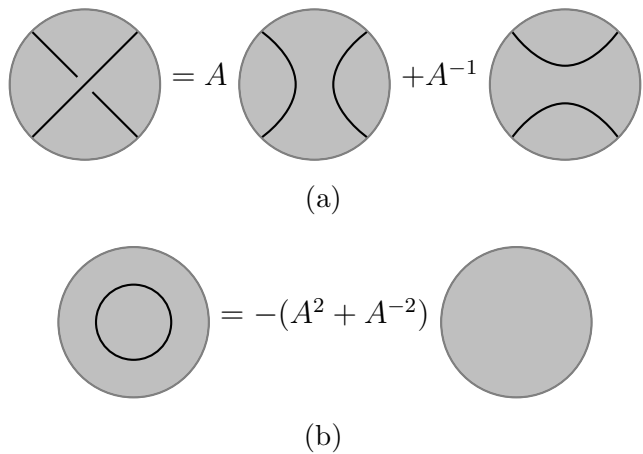
\begin{figure}[ht]
\begin{subfigure}{\textwidth}
\begin{center}
\begin{tikzpicture}
\draw[fill=lightgray,opacity=0.2] (0,0) circle (1);
\draw[black, thick] (-0.707,-0.707) -- (0.707,0.707);
\draw[black, thick] (-0.707,0.707) -- (-0.1,0.1);
\draw[black, thick] (0.1,-0.1) -- (0.707,-0.707);
\draw[gray, opacity=0.2, thick] (0,0) circle (1);
\end{tikzpicture}
\raisebox{1cm}{$= A$}
\begin{tikzpicture}
\draw[fill=lightgray,opacity=0.2] (0,0) circle (1);
\draw[black, thick] plot [smooth, tension=1] coordinates { (0.707,0.707) (0.25,0) (0.707,-0.707)};
\draw[black, thick] plot [smooth, tension=1] coordinates { (-0.707,0.707) (-0.25,0) (-0.707,-0.707)};
\draw[gray, opacity=0.2, thick] (0,0) circle (1);
\end{tikzpicture}
\raisebox{1cm}{$+ A^{-1}$}
\begin{tikzpicture}
\draw[fill=lightgray,opacity=0.2] (0,0) circle (1);
\draw[black, thick] plot [smooth, tension=1] coordinates { (-0.707,0.707) (0,0.25) (0.707,0.707)};
\draw[black, thick] plot [smooth, tension=1] coordinates { (-0.707,-0.707) (0,-0.25) (0.707,-0.707)};
\draw[gray, opacity=0.2, thick] (0,0) circle (1);
\end{tikzpicture}
\end{center}
\caption{\label{subfig:resolution}}
\bigskip
\end{subfigure}
\begin{subfigure}{\textwidth}
\begin{center}
\begin{tikzpicture}
\draw[fill=lightgray,opacity=0.2] (0,0) circle (1);
\draw[black, thick] (0,0) circle (0.5);
\draw[gray, opacity=0.2, thick] (0,0) circle (1);
\end{tikzpicture}
\raisebox{1cm}{$= -(A^2+A^{-2})$}
\begin{tikzpicture}
\draw[fill=lightgray,opacity=0.2] (0,0) circle (1);
\draw[gray, opacity=0.2, thick] (0,0) circle (1);
\end{tikzpicture}
\end{center}
\caption{\label{subfig:unknot}}
\bigskip
\end{subfigure}
\caption{The Kauffman bracket skein relations.\label{fig:skeinrelations}}
\end{figure}

We will consider a collection of relations between framed links depicted in Figure~\ref{fig:skeinrelations}. Each of the pictures appearing in these relations represents a framed link~$S\times[0,1]$. We depict only the portion of the link that projects to a neighborhood $U\subset S$, shaded in gray, and the links appearing in a given relation are assumed to be identical outside this neighborhood~$U$. The quotient of $\mathcal{L}_A(S)$ by these relations is called the \emph{(Kauffman bracket) skein algebra} of~$S$ and denoted $\Sk_A(S)$. It is known~\cite{BFK99,PS00,T91} that the skein algebra is a quantization of the $\mathrm{SL}_2$-character variety of~$S$. More~precisely, there is an isomorphism of algebras 
\[
\Sk_A(S)\otimes_{\mathbb{C}[A^{\pm1}]}\left(\mathbb{C}[A^{\pm1}]/(A+1)\right)\cong\mathcal{O}\left(\mathcal{M}_{\mathrm{flat}}(S,\mathrm{SL}_2)\right).
\]
If $L\subset S\times[0,1]$ is a link with the blackboard framing that projects to~$\gamma$, then this isomorphism sends $L$ to the regular function $-\Tr_\gamma$ on the character variety.

We will also need a relative version of the skein algebra of a compact surface $S$ with nonempty boundary. Suppose $\partial_1,\dots,\partial_n$ are the boundary components of~$S$, and define $\mathcal{L}_{A,\boldsymbol{\lambda}}(S)$ to be the $\mathbb{C}[A^{\pm1},\lambda_1^{\pm1},\dots,\lambda_n^{\pm1}]$-module freely generated by the set of isotopy classes of framed links in~$S\times[0,1]$. We define a product operation on this module exactly as we did for~$\mathcal{L}_A(S)$. For each boundary component~$\partial_i$, we consider the relation depicted in Figure~\ref{fig:puncturerelation}.

\begin{figure}[ht]
\begin{tikzpicture}
\draw[fill=lightgray,opacity=0.2,even odd rule] (0,0) circle (1) (0,0) circle (0.33);
\draw[black, thick] (0,0) circle (0.67);
\draw[gray, opacity=0.2, thick] (0,0) circle (1);
\draw[gray, opacity=0.2, thick] (0,0) circle (0.33);
\end{tikzpicture}
\raisebox{1cm}{$= -(\lambda_i+\lambda_i^{-1})$}
\begin{tikzpicture}
\draw[fill=lightgray,opacity=0.2,even odd rule] (0,0) circle (1) (0,0) circle (0.33);
\draw[gray, opacity=0.2, thick] (0,0) circle (1);
\draw[gray, opacity=0.2, thick] (0,0) circle (0.33);
\end{tikzpicture}
\caption{Additional relation associated to a boundary component.\label{fig:puncturerelation}}
\end{figure}
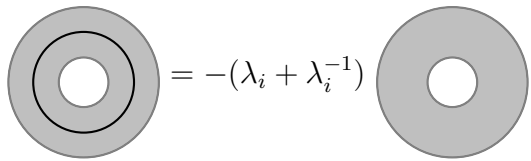

As in the previous relations, each picture in this relation represents a framed link in~$S\times[0,1]$. We depict only the portion of the link that projects to a neighborhood $U\subset S$ of the boundary component~$\partial_i$, shaded in gray, and the links on either side of the relation are assumed to be identical outside of~$U$. The quotient of $\mathcal{L}_{A,\boldsymbol{\lambda}}(S)$ by the relations depicted in Figures~\ref{fig:skeinrelations} and~\ref{fig:puncturerelation} will be denoted~$\Sk_{A,\boldsymbol{\lambda}}(S)$ and called the \emph{relative skein algebra} of~$S$. We will also need a slight variant of this construction. Namely, we consider the $\mathbb{C}$-algebra homomorphism $\mathbb{C}[A^{\pm1},\lambda_1^{\pm1},\dots,\lambda_n^{\pm1}]\rightarrow\mathbb{C}[A^{\pm1},\rho_1^{\pm1},\dots,\rho_n^{\pm1}]$ given by $A\mapsto A$ and $\lambda_i\mapsto\rho_i^2$ for all~$i$. This gives $\mathbb{C}[A^{\pm1},\rho_1^{\pm1},\dots,\rho_n^{\pm1}]$ the structure of a $\mathbb{C}[A^{\pm1},\lambda_1^{\pm1},\dots,\lambda_n^{\pm1}]$-module, and we write 
\[
\Sk_{A,\boldsymbol{\rho}}(S)=\mathbb{C}[A^{\pm1},\rho_1^{\pm1},\dots,\rho_n^{\pm1}]\otimes\Sk_{A,\boldsymbol{\lambda}}(S)
\]
for the algebra obtained from $\Sk_{A,\boldsymbol{\lambda}}(S)$ by extending scalars to~$\mathbb{C}[A^{\pm1},\rho_1^{\pm1},\dots,\rho_n^{\pm1}]$. If we have $n=1$, $\rho_1=\rho$, and $\lambda_1=\lambda$, we will denote the algebras $\Sk_{A,\boldsymbol{\lambda}}(S)$ and $\Sk_{A,\boldsymbol{\rho}}(S)$ by~$\Sk_{A,\lambda}(S)$ and~$\Sk_{A,\rho}(S)$, respectively.

\subsection{Presentations of skein algebras}

Let us write $S_{g,n}$ for the surface obtained from a closed oriented surface of genus~$g$ by removing $n$ open disks with embedded disjoint closures. In this subsection, we recall some presentations of the skein algebra of the four-holed sphere~$S_{0,4}$ and one-holed torus~$S_{1,1}$. Figure~\ref{fig:S04generators} shows a collection of closed curves on~$S=S_{0,4}$. We write~$\alpha$,~$\beta$,~$\gamma$, and~$\delta_i$ for links with the blackboard framing in $\Sk_A(S)$ that project to these curves as indicated in the figure.

\begin{figure}[ht]
\begin{subfigure}{\textwidth/5}
\begin{center}
\begin{tikzpicture}[scale=1.25]
\draw[gray, thin, rotate=45] (1,0) ellipse (0.05 and 0.25);
\draw[gray, thin, rotate=135] (1,0) ellipse (0.05 and 0.25);
\draw[gray, thin, rotate=-135] (1,0) ellipse (0.05 and 0.25);
\draw[gray, thin, rotate=-45] (1,0) ellipse (0.05 and 0.25);
\draw[gray, thin] plot [smooth, tension=1] coordinates { (0.53,0.89) (0,0.75) (-0.53,0.89)};
\draw[gray, thin] plot [smooth, tension=1] coordinates { (-0.89,0.53) (-0.75,0) (-0.89,-0.53)};
\draw[gray, thin] plot [smooth, tension=1] coordinates { (-0.53,-0.89) (0,-0.75) (0.53,-0.89)};
\draw[gray, thin] plot [smooth, tension=1] coordinates { (0.89,-0.53) (0.75,0) (0.89,0.53)};
\draw[black, thick, dotted] plot [smooth, tension=1] coordinates { (-0.75,0) (-0.5,0.1) (0.5,0.1) (0.75,0)};
\draw[black, thick] plot [smooth, tension=1] coordinates { (-0.75,0) (-0.5,-0.1) (0.5,-0.1) (0.75,0)};
\node at (0,-0.3) {$\alpha$};
\end{tikzpicture}
\end{center}
\caption{}
\end{subfigure}
\begin{subfigure}{\textwidth/5}
\begin{center}
\begin{tikzpicture}[scale=1.25]
\draw[gray, thin, rotate=45] (1,0) ellipse (0.05 and 0.25);
\draw[gray, thin, rotate=135] (1,0) ellipse (0.05 and 0.25);
\draw[gray, thin, rotate=-135] (1,0) ellipse (0.05 and 0.25);
\draw[gray, thin, rotate=-45] (1,0) ellipse (0.05 and 0.25);
\draw[gray, thin] plot [smooth, tension=1] coordinates { (0.53,0.89) (0,0.75) (-0.53,0.89)};
\draw[gray, thin] plot [smooth, tension=1] coordinates { (-0.89,0.53) (-0.75,0) (-0.89,-0.53)};
\draw[gray, thin] plot [smooth, tension=1] coordinates { (-0.53,-0.89) (0,-0.75) (0.53,-0.89)};
\draw[gray, thin] plot [smooth, tension=1] coordinates { (0.89,-0.53) (0.75,0) (0.89,0.53)};
\draw[black, thick, dotted] plot [smooth, tension=1] coordinates { (0,-0.75) (-0.1,-0.5) (-0.1,0.5) (0,0.75)};
\draw[black, thick] plot [smooth, tension=1] coordinates { (0,-0.75) (0.1,-0.5) (0.1,0.5) (0,0.75)};
\node at (0.35,0) {$\beta$};
\end{tikzpicture}
\end{center}
\caption{}
\end{subfigure}
\begin{subfigure}{\textwidth/5}
\begin{center}
\begin{tikzpicture}[scale=1.25]
\draw[gray, thin, rotate=45] (1,0) ellipse (0.05 and 0.25);
\draw[gray, thin, rotate=135] (1,0) ellipse (0.05 and 0.25);
\draw[gray, thin, rotate=-135] (1,0) ellipse (0.05 and 0.25);
\draw[gray, thin, rotate=-45] (1,0) ellipse (0.05 and 0.25);
\draw[gray, thin] plot [smooth, tension=1] coordinates { (0.53,0.89) (0,0.75) (-0.53,0.89)};
\draw[gray, thin] plot [smooth, tension=1] coordinates { (-0.89,0.53) (-0.75,0) (-0.89,-0.53)};
\draw[gray, thin] plot [smooth, tension=1] coordinates { (-0.53,-0.89) (0,-0.75) (0.53,-0.89)};
\draw[gray, thin] plot [smooth, tension=1] coordinates { (0.89,-0.53) (0.75,0) (0.89,0.53)};
\draw[black, thick] plot [smooth, tension=1] coordinates { (0,0.75) (-0.25,0.65) (-0.65,0.25) (-0.75,0)};
\draw[black, thick] plot [smooth, tension=1] coordinates { (0.75,0) (0.65,-0.25) (0.25,-0.65) (0,-0.75)};
\draw[black, thick, dotted] plot [smooth, tension=1] coordinates { (0.75,0) (0.65,0.25) (0.25,0.65) (0,0.75)};
\draw[black, thick, dotted] plot [smooth, tension=1] coordinates { (-0.75,0) (-0.65,-0.25) (-0.25,-0.65) (0,-0.75)};
\node at (0.3,-0.3) {$\gamma$};
\end{tikzpicture}
\end{center}
\caption{}
\end{subfigure}
\begin{subfigure}{\textwidth/5}
\begin{center}
\begin{tikzpicture}[scale=1.25]
\draw[gray, thin, rotate=45] (1,0) ellipse (0.05 and 0.25);
\draw[gray, thin, rotate=135] (1,0) ellipse (0.05 and 0.25);
\draw[gray, thin, rotate=-135] (1,0) ellipse (0.05 and 0.25);
\draw[gray, thin, rotate=-45] (1,0) ellipse (0.05 and 0.25);
\draw[gray, thin] plot [smooth, tension=1] coordinates { (0.53,0.89) (0,0.75) (-0.53,0.89)};
\draw[gray, thin] plot [smooth, tension=1] coordinates { (-0.89,0.53) (-0.75,0) (-0.89,-0.53)};
\draw[gray, thin] plot [smooth, tension=1] coordinates { (-0.53,-0.89) (0,-0.75) (0.53,-0.89)};
\draw[gray, thin] plot [smooth, tension=1] coordinates { (0.89,-0.53) (0.75,0) (0.89,0.53)};
\draw[black, thick, dotted] plot [smooth, tension=1] coordinates { (0.35,0.82) (0.62,0.62) (0.82,0.35)};
\draw[black, thick] plot [smooth, tension=1] coordinates { (0.35,0.82) (0.53,0.53) (0.82,0.35)};
\draw[black, thick, dotted] plot [smooth, tension=1] coordinates { (-0.82,-0.35) (-0.62,-0.62) (-0.35,-0.82)};
\draw[black, thick] plot [smooth, tension=1] coordinates { (-0.82,-0.35) (-0.53,-0.53) (-0.35,-0.82)};
\draw[black, thick, dotted] plot [smooth, tension=1] coordinates { (-0.35,0.82) (-0.62,0.62) (-0.82,0.35)};
\draw[black, thick] plot [smooth, tension=1] coordinates { (-0.35,0.82) (-0.53,0.53) (-0.82,0.35)};
\draw[black, thick, dotted] plot [smooth, tension=1] coordinates { (0.82,-0.35) (0.62,-0.62) (0.35,-0.82)};
\draw[black, thick] plot [smooth, tension=1] coordinates { (0.82,-0.35) (0.53,-0.53) (0.35,-0.82)};
\node at (-0.38,0.38) {$\delta_2$};
\node at (0.38,0.38) {$\delta_1$};
\node at (-0.38,-0.38) {$\delta_3$};
\node at (0.38,-0.38) {$\delta_4$};
\end{tikzpicture}
\end{center}
\caption{}
\end{subfigure}
\caption{Generators of $\Sk_A(S_{0,4})$.\label{fig:S04generators}}
\end{figure}
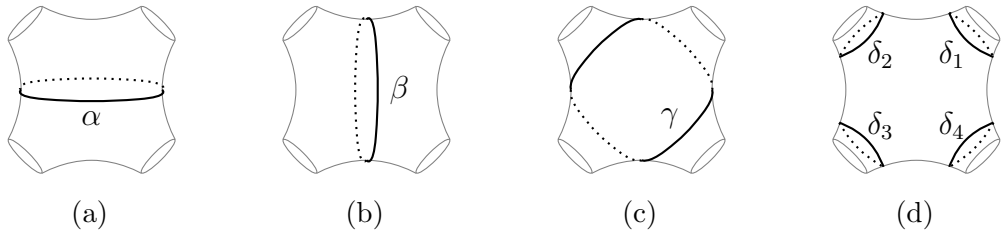

\begin{proposition}[\cite{BP00}, Theorem~3.1]
\label{prop:skeinS04}
The skein algebra $\Sk_A(S_{0,4})$ is generated by $\alpha$, $\beta$, $\gamma$, and the~$\delta_i$, subject to the relations 
\begin{align*}
A^2\alpha\beta-A^{-2}\beta\alpha &= \left(A^4-A^{-4}\right)\gamma+\left(A^2-A^{-2}\right)\left(\delta_2\delta_4+\delta_1\delta_3\right), \\
A^2\beta\gamma-A^{-2}\gamma\beta &= \left(A^4-A^{-4}\right)\alpha+\left(A^2-A^{-2}\right)\left(\delta_1\delta_2+\delta_3\delta_4\right), \\
A^2\gamma\alpha-A^{-2}\alpha\gamma &= \left(A^4-A^{-4}\right)\beta+\left(A^2-A^{-2}\right)\left(\delta_1\delta_4+\delta_2\delta_3\right),
\end{align*}
and the relation 
\begin{align*}
A^2\alpha\beta\gamma = &A^4\alpha^2+A^{-4}\beta^2+A^4\gamma^2 \\
&+A^2\left(\delta_1\delta_2+\delta_3\delta_4\right)\alpha+A^{-2}\left(\delta_1\delta_4+\delta_2\delta_3\right)\beta+A^2\left(\delta_2\delta_4+\delta_1\delta_3\right)\gamma \\
&+\delta_1^2+\delta_2^2+\delta_3^2+\delta_4^2+\delta_1\delta_2\delta_3\delta_4-\left(A^2+A^{-2}\right)^2.
\end{align*}
\end{proposition}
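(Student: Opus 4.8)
The plan is to build the obvious candidate morphism $\pi\colon\mathcal{A}\to\Sk_A(S_{0,4})$, where $\mathcal{A}$ is the $\mathbb{C}[A^{\pm1}]$-algebra presented by the generators and relations in the statement, sending the abstract generators to the framed links $\alpha,\beta,\gamma,\delta_1,\dots,\delta_4$, and then to check three things: that the four relations actually hold among these links (so $\pi$ is well defined), that these links generate $\Sk_A(S_{0,4})$ (so $\pi$ is surjective), and that $\pi$ is injective (so the relations are complete). The first two checks are diagrammatic and more or less routine; the third is a rank comparison against the multicurve basis of the skein algebra, and it is where the genuine work lies.

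For surjectivity I would invoke the standard fact that the isotopy classes of multicurves with no contractible component form a $\mathbb{C}[A^{\pm1}]$-basis of $\Sk_A(S_{0,4})$. Every simple closed curve on a four-holed sphere is either one of the peripheral curves $\delta_i$ or a separating curve splitting the four boundary components into two pairs, and the latter are classified by a slope in $\mathbb{Q}\cup\{\infty\}$, with $\alpha,\beta,\gamma$ occurring at three consecutive Farey slopes (say $0$, $\infty$, and $1$). Since interior curves of distinct slopes must intersect, every multicurve is a disjoint union of parallel copies of a single interior curve with parallel copies of the $\delta_i$, so it suffices to show that every interior simple closed curve lies in the subalgebra generated by $\alpha,\beta,\gamma,\delta_i$. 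This follows by a continued-fraction induction: applying the Kauffman resolutions of Figure~\ref{fig:skeinrelations} to the product of two curves at Farey-neighbouring slopes produces the curve at the mediant slope as the leading term, plus a $\mathbb{C}[A^{\pm1}]$-combination of curves of strictly smaller complexity, and one iterates down to $\alpha,\beta,\gamma$.

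To verify the relations I would fix representatives of $\alpha,\beta,\gamma$ meeting pairwise transversally in two points with no triple point. For the first commutation relation I would write the diagrams of the products $\alpha\beta$ and $\beta\alpha$, which differ only in the over/under data at the two points of $\alpha\cap\beta$, resolve those crossings via Figure~\ref{subfig:resolution}, discard the contractible loops that appear using Figure~\ref{subfig:unknot}, and identify the resulting multicurves: one resolution reproduces $\gamma$ with the coefficients $A^{\pm4}$, and the other produces unions of two peripheral curves, accounting for the $A^{\pm2}(\delta_i\delta_j+\delta_k\delta_l)$ terms. The other two commutation relations then follow from the order-three symmetry of $S_{0,4}$ that cyclically permutes the slopes of $\alpha,\beta,\gamma$ and the corresponding pairings of $\delta_1,\dots,\delta_4$. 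For the cubic relation I would resolve the product $\alpha\beta\gamma$ at all six crossings and reduce each resulting diagram, every one being a union of parallel copies of $\alpha,\beta,\gamma$ and peripheral curves of complexity below that of $\alpha\beta\gamma$, and collect the terms into the stated right-hand side; the $\delta_1\delta_2\delta_3\delta_4$ term and the constant $-(A^2+A^{-2})^2$ are most cleanly pinned down by specializing to $A=-1$. Alternatively, and with much less bookkeeping, the cubic relation can be deduced algebraically from the three commutation relations by expanding $\alpha\cdot(\beta\gamma)$ in two ways and comparing.

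Finally, for injectivity: using the commutation relations together with the centrality of the $\delta_i$ (they can be isotoped into $\partial S$), I would reorder an arbitrary monomial in $\alpha,\beta,\gamma$ into a normal form modulo lower-complexity terms, and then use the cubic relation, which expresses $\alpha\beta\gamma$ through $\alpha^2,\beta^2,\gamma^2$ and lower-order monomials, to eliminate every monomial involving all three of $\alpha,\beta,\gamma$. This exhibits $\mathcal{A}$ as spanned over $\mathbb{C}[A^{\pm1}]$ by the monomials $\delta_1^{a_1}\delta_2^{a_2}\delta_3^{a_3}\delta_4^{a_4}$ times a normal-form monomial in at most two of $\alpha,\beta,\gamma$, a set that maps onto the multicurve basis of $\Sk_A(S_{0,4})$ and is no larger than it; since $\pi$ is a surjection of free $\mathbb{C}[A^{\pm1}]$-modules carrying a spanning set of the correct size onto a basis, it is an isomorphism. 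One can equivalently package this as a flatness argument: $\mathcal{A}$ specializes at $A=-1$ to the classical Fricke presentation of $\mathcal{O}(\mathcal{M}_{\mathrm{flat}}(S_{0,4},\mathrm{SL}_2))$, which is identified with $\Sk_A(S_{0,4})/(A+1)$ via the isomorphism recalled after Figure~\ref{fig:skeinrelations}, and the bounded size of the spanning set lifts this to an isomorphism before specialization. I expect the completeness step to be the main obstacle: one must show the reduction just described really terminates and that no family of monomials escapes it, i.e. that these particular commutation and cubic relations suffice — essentially a PBW-type assertion about flatly deforming the Fricke cubic surface.
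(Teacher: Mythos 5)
The paper itself offers no proof of this proposition --- it is quoted from Bullock--Przytycki \cite{BP00}, Theorem~3.1 --- so the only question is whether your reconstruction would stand on its own. Your architecture (present the abstract algebra $\mathcal{A}$, verify the relations by resolving crossings, prove surjectivity via the multicurve basis and a Farey/continued-fraction induction on slopes, prove injectivity by a normal-form count) is the standard one and is essentially the strategy of the cited proof. The surjectivity step is correctly reduced: every essential non-peripheral simple closed curve on $S_{0,4}$ gives a $2{+}2$ partition of the boundary components and is classified by a slope, any two distinct slopes intersect, and the mediant recursion generates all slope curves from $\alpha,\beta,\gamma$ modulo the $\delta_i$.

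Two points are genuinely wrong as written. First, the cubic relation \emph{cannot} ``be deduced algebraically from the three commutation relations'': the algebra defined by the commutation relations alone is a form of the (universal) Askey--Wilson algebra, in which the difference of the two sides of the cubic relation is a nonzero \emph{central} element (the Casimir); the skein algebra is the further quotient setting that central element to zero. No reordering of $\alpha\beta\gamma$ will produce the constant term $\delta_1\delta_2\delta_3\delta_4-(A^2+A^{-2})^2$, so the diagrammatic verification you describe is unavoidable, not optional. Second, the injectivity argument does not close. The monomials $\delta^{\mathbf{a}}\alpha^m\beta^n$ (and their analogues) do not ``map onto the multicurve basis'': for $m,n\ge1$ the image of $\alpha^m\beta^n$ is a linear combination of many multicurves, not a basis element; and for modules of infinite rank, a surjection carrying a spanning set of the same cardinality onto a basis need not be injective (a shift operator is a counterexample), so the bare size comparison proves nothing. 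What is required is a filtration argument: filter both sides by total degree in $\alpha,\beta,\gamma$ (equivalently, by geometric intersection number with a fixed pair of pants curves), check $\pi$ is filtered, show your reduction spans each degree-$d$ piece of $\mathcal{A}$ over $\mathbb{C}[A^{\pm1}][\delta_1,\dots,\delta_4]$ by the $3d$ monomials $\alpha^a\beta^b\gamma^c$ with $a+b+c=d$ and $\min(a,b,c)=0$, and verify that the basis multicurves $\gamma_s^k$ of top degree $d$ have exactly these monomials as pairwise distinct leading terms. You correctly identify this as the crux, but as it stands it is an unfilled gap rather than a routine verification.
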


Next we consider the surface $S=S_{1,1}$. Figure~\ref{fig:skeingenerators} shows four elements of~$\Sk_A(S)$, denoted~$\alpha$,~$\beta$,~$\gamma$, and~$\delta$. (Note that we use the same symbols as in the case of the four-holed sphere; in the following, we will always make clear which surface we are referring to.)

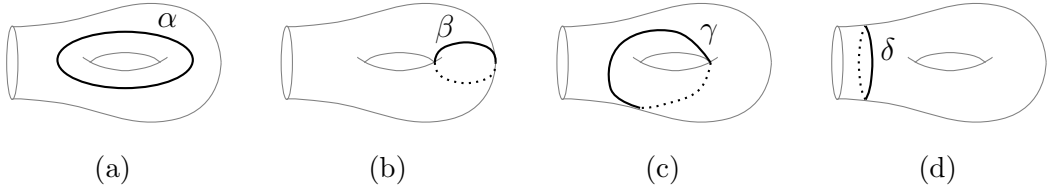
\begin{figure}[ht]
\begin{subfigure}{\textwidth/5}
\begin{center}
\begin{tikzpicture}[scale=0.75]
\draw[gray, thin] (-2,0) ellipse (0.1 and 0.65);
\draw[gray, thin] plot [smooth, tension=1] coordinates { (-2,0.65) (-1.25,0.7) (0.9,1) (1.7,0)};
\draw[gray, thin] plot [smooth, tension=1] coordinates { (-2,-0.65) (-1.25,-0.7) (0.9,-1) (1.7,0)};
\draw[gray, thin] plot [smooth, tension=1] coordinates { (-0.75,0.1) (-0.3,-0.1) (0.3,-0.1) (0.75,0.1)};
\draw[gray, thin] plot [smooth, tension=1] coordinates { (-0.625,0) (-0.3,0.15) (0.3,0.15) (0.625,0)};
\draw[black, thick] (0,0.05) ellipse (1.2 and 0.5);
\node at (0.75,0.75) {$\alpha$};
\end{tikzpicture}
\end{center}
\caption{}
\end{subfigure}
\begin{subfigure}{\textwidth/5}
\begin{center}
\begin{tikzpicture}[scale=0.75]
\draw[gray, thin] (-2,0) ellipse (0.1 and 0.65);
\draw[gray, thin] plot [smooth, tension=1] coordinates { (-2,0.65) (-1.25,0.7) (0.9,1) (1.7,0)};
\draw[gray, thin] plot [smooth, tension=1] coordinates { (-2,-0.65) (-1.25,-0.7) (0.9,-1) (1.7,0)};
\draw[gray, thin] plot [smooth, tension=1] coordinates { (-0.75,0.1) (-0.3,-0.1) (0.3,-0.1) (0.75,0.1)};
\draw[gray, thin] plot [smooth, tension=1] coordinates { (-0.625,0) (-0.3,0.15) (0.3,0.15) (0.625,0)};
\draw[black, thick] plot [smooth, tension=1] coordinates { (0.625,0) (0.85,0.3) (1.5,0.3) (1.7,0)};
\draw[black, thick, dotted] plot [smooth, tension=1] coordinates { (0.625,0) (0.85,-0.3) (1.5,-0.3) (1.7,0)};
\node at (0.8,0.6) {$\beta$};
\end{tikzpicture}
\end{center}
\caption{}
\end{subfigure}
\begin{subfigure}{\textwidth/5}
\begin{center}
\begin{tikzpicture}[scale=0.75]
\draw[gray, thin] (-2,0) ellipse (0.1 and 0.65);
\draw[gray, thin] plot [smooth, tension=1] coordinates { (-2,0.65) (-1.25,0.7) (0.9,1) (1.7,0)};
\draw[gray, thin] plot [smooth, tension=1] coordinates { (-2,-0.65) (-1.25,-0.7) (0.9,-1) (1.7,0)};
\draw[gray, thin] plot [smooth, tension=1] coordinates { (-0.75,0.1) (-0.3,-0.1) (0.3,-0.1) (0.75,0.1)};
\draw[gray, thin] plot [smooth, tension=1] coordinates { (-0.625,0) (-0.3,0.15) (0.3,0.15) (0.625,0)};
\draw[black, thick] plot [smooth, tension=0.75] coordinates { (0.625,0) (0,0.55) (-0.95,0.35) (-1.15,-0.45) (-0.625,-0.8)};
\draw[black, thick, dotted] plot [smooth, tension=0.75] coordinates { (0.625,0) (0.4,-0.5) (-0.25,-0.75) (-0.625,-0.8)};
\node at (0.6,0.5) {$\gamma$};
\end{tikzpicture}
\end{center}
\caption{}
\end{subfigure}
\begin{subfigure}{\textwidth/5}
\begin{center}
\begin{tikzpicture}[scale=0.75]
\draw[gray, thin] (-2,0) ellipse (0.1 and 0.65);
\draw[gray, thin] plot [smooth, tension=1] coordinates { (-2,0.65) (-1.25,0.7) (0.9,1) (1.7,0)};
\draw[gray, thin] plot [smooth, tension=1] coordinates { (-2,-0.65) (-1.25,-0.7) (0.9,-1) (1.7,0)};
\draw[gray, thin] plot [smooth, tension=1] coordinates { (-0.75,0.1) (-0.3,-0.1) (0.3,-0.1) (0.75,0.1)};
\draw[gray, thin] plot [smooth, tension=1] coordinates { (-0.625,0) (-0.3,0.15) (0.3,0.15) (0.625,0)};
\draw[black, thick, dotted] plot [smooth, tension=1] coordinates { (-1.5,-0.65) (-1.6,-0.3) (-1.6,0.3) (-1.5,0.65)};
\draw[black, thick] plot [smooth, tension=1] coordinates { (-1.5,-0.65) (-1.4,-0.3) (-1.4,0.3) (-1.5,0.65)};
\node at (-1.1,0.25) {$\delta$};
\end{tikzpicture}
\end{center}
\caption{}
\end{subfigure}
\caption{Generators of $\Sk_A(S_{1,1})$.\label{fig:skeingenerators}}
\end{figure}

\begin{proposition}[\cite{BP00}, Theorem~2.1]
\label{prop:skeinS11}
The skein algebra $\Sk_A(S_{1,1})$ is generated by $\alpha$, $\beta$, and~$\gamma$, subject to the relations 
\begin{align*}
A^{-1}\alpha\beta-A\beta\alpha &= (A^{-2}-A^2)\gamma, \\
A^{-1}\beta\gamma-A\gamma\beta &= (A^{-2}-A^2)\alpha, \\
A^{-1}\gamma\alpha-A\alpha\gamma &= (A^{-2}-A^2)\beta.
\end{align*}
The element $\delta$ satisfies the further relation 
\[
A^{-2}\alpha^2+A^2\beta^2+A^{-2}\gamma^2-A^{-1}\alpha\beta\gamma=A^2+A^{-2}-\delta.
\]
\end{proposition}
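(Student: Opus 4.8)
The plan is to establish the presentation in three steps: verify that the displayed relations hold in $\Sk_A(S_{1,1})$, show that $\alpha,\beta,\gamma$ generate it, and then argue that the relations are complete by a flat–deformation argument that reduces everything to the classical case $A=-1$. For the first step, fix a model of $S_{1,1}$ in which $\alpha,\beta,\gamma$ are represented by simple closed curves realizing the three slopes of a triangle in the Farey tessellation, so that they pairwise intersect once. Each commutator identity is then read off by stacking two generators, resolving the single crossing of the resulting diagram with the relation of Figure~\ref{fig:skeinrelations}(a), and identifying the two smoothings with isotopy classes of the generators. The quartic identity for $\delta$ is the quantum Fricke identity: $\delta$ is isotopic to $\partial S_{1,1}$, hence to the boundary of a regular neighbourhood of $\alpha\cup\beta$, and resolving a standard diagram for that curve with Figures~\ref{fig:skeinrelations}(a)--(b) rewrites it as the asserted polynomial in $\alpha,\beta,\gamma$; specializing $A=-1$ and using $\alpha\mapsto-\Tr_\alpha$, etc., recovers the classical trace identity
\[
\Tr([X,Y])=\Tr(X)^{2}+\Tr(Y)^{2}+\Tr(XY)^{2}-\Tr(X)\Tr(Y)\Tr(XY)-2 .
\]
As a sanity check, $\delta$ is central because it is boundary–parallel, so the right-hand side of the quartic relation is forced to be a central element. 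These are finite diagram computations, which I will not carry out here.

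For the second step, recall that $\Sk_A(S_{1,1})$ is free as a $\mathbb{C}[A^{\pm1}]$-module on the isotopy classes of multicurves with no contractible component. On $S_{1,1}$ every such multicurve has the form $C_{p/q}^{\,k}\,\delta^{\ell}$, where $C_{p/q}$ is the simple closed curve of slope $p/q\in\mathbb{Q}\cup\{\infty\}$ and $k,\ell\ge 0$, because two non-peripheral simple closed curves of different slopes must intersect while $\delta$ is disjoint from every curve. Stacked parallel copies of a single curve can be pushed apart, so $C_{p/q}^{\,k}$ is literally this multicurve, and $\delta$ is a polynomial in $\alpha,\beta,\gamma$ by Step~1; hence it suffices to show each $C_{p/q}$ is a polynomial in $\alpha,\beta,\gamma$. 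Taking $\alpha,\beta,\gamma$ to be the three curves of a Farey triangle, this follows by induction along the Farey tessellation: any other reduced slope is the mediant of a pair of Farey neighbours $p'/q'$, $p''/q''$ of smaller complexity, the curves $C_{p'/q'}$ and $C_{p''/q''}$ meet once, and resolving that crossing expresses $C_{p/q}$ through $C_{p'/q'}C_{p''/q''}$ and a strictly simpler curve — the skein avatar of the $\mathrm{SL}_2$ identity $\Tr(X)\Tr(Y)=\Tr(XY)+\Tr(XY^{-1})$. Thus $\alpha,\beta,\gamma$ generate. Finally, each of the three commutator relations rewrites one of the ``descending'' products $\beta\alpha$, $\gamma\beta$, $\gamma\alpha$ as a more ordered monomial of the same degree plus terms of strictly lower degree, so iterating this rewriting terminates and shows that the ordered monomials $\alpha^{i}\beta^{j}\gamma^{k}$ span $\Sk_A(S_{1,1})$ over $\mathbb{C}[A^{\pm1}]$.

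For the third step, let $R$ be the abstract $\mathbb{C}[A^{\pm1}]$-algebra defined by the three commutator relations; Steps~1--2 give a surjection $\phi\colon R\twoheadrightarrow\Sk_A(S_{1,1})$, and the rewriting above shows $R$ is spanned by $\{\alpha^{i}\beta^{j}\gamma^{k}\}$. To see that $\phi$ is injective, note that setting $A=-1$ collapses all three relations to commutativity, so $R/(A+1)R\cong\mathbb{C}[\alpha,\beta,\gamma]$; on the other hand, by the classical Fricke--Vogt theorem, $\mathcal{O}(\mathcal{M}_{\mathrm{flat}}(S_{1,1},\mathrm{SL}_2))$ is the polynomial ring in the three traces $\Tr_\alpha,\Tr_\beta,\Tr_\gamma$, since $\pi_1(S_{1,1})$ is free of rank two. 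The reduction of $\phi$ modulo $(A+1)$ is therefore a surjection of $\mathbb{C}$-algebras from $\mathbb{C}[\alpha,\beta,\gamma]$ onto a polynomial ring in three variables, hence an isomorphism (a surjective endomorphism of a Noetherian ring is injective). Now given $x\in\ker\phi$, write $x=\sum_w f_w(A)\,w$ over ordered monomials, divide out the largest power of $(A+1)$ common to all the $f_w$ (using that $\Sk_A(S_{1,1})$ is torsion-free, being free, over $\mathbb{C}[A^{\pm1}]$), and reduce modulo $(A+1)$: the result is a nonzero $\mathbb{C}$-linear combination of the linearly independent monomials $\alpha^{i}\beta^{j}\gamma^{k}$ lying in the kernel of an isomorphism, a contradiction. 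Hence $\phi$ is an isomorphism, which is the asserted presentation, and the statement about $\delta$ is precisely the quartic identity of Step~1.

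I expect the main obstacle to be the Farey induction in Step~2 — choosing a complexity function on slopes and checking that each crossing resolution strictly decreases it, so that the curves $C_{p/q}$ are indeed all polynomial in $\alpha,\beta,\gamma$ — together with the care needed to carry out the diagram computations of Step~1 (especially the one for $\delta$) cleanly. By contrast, Step~3 is a short argument resting entirely on freeness of the skein module and the classical description of $\mathcal{M}_{\mathrm{flat}}(S_{1,1},\mathrm{SL}_2)$.
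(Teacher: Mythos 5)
The paper does not actually prove Proposition~\ref{prop:skeinS11}: it imports the presentation wholesale from Theorem~2.1 of \cite{BP00}. So there is no in-paper argument to compare against, and your proposal should be judged as a reconstruction of the cited result; as such it is correct. Steps~1--2 are the standard arguments: the commutator relations come from resolving the single crossing of two generators that meet once, the quartic relation is the quantum Fricke identity for $\delta\simeq\partial(\text{nbhd}(\alpha\cup\beta))$, and generation follows from freeness of the skein module on multicurves plus the product-to-sum recursion along the Farey tessellation (the complexity bookkeeping you flag is routine: for Farey neighbours the second smoothing $(p'-p'')/(q'-q'')$ has strictly smaller $|p|+|q|$ than the mediant, so the induction closes). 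Your Step~3 is the part with genuine content beyond diagram checks, and it is where your route likely diverges from \cite{BP00}, who compare ordered monomials with the multicurve basis by a leading-term/filtration argument; your alternative — specialize at $A=-1$, use $R/(A+1)\cong\mathbb{C}[\alpha,\beta,\gamma]$, the Fricke--Vogt description of $\mathcal{O}(\mathcal{M}_{\mathrm{flat}}(S_{1,1},\mathrm{SL}_2))$ as a polynomial ring in the three traces, and torsion-freeness of the free $\mathbb{C}[A^{\pm1}]$-module $\Sk_A(S_{1,1})$ — is valid and arguably cleaner. The only point I would insist you make explicit is the order of operations in that step: the representation $x=\sum_w f_w(A)\,w$ is not a priori unique, so the spanning claim for ordered monomials in the abstract algebra $R$ must be established first (your rewriting argument does this), and the final contradiction then proves injectivity of $\phi$ and linear independence of the ordered monomials simultaneously, with no circularity. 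I see no gap.
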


\subsection{Spherical DAHA of type $(C_1^\vee,C_1)$}

Let us write $\mathbb{C}_{q,\mathbf{t}}=\mathbb{C}(q^{\frac{1}{2}},t_1,\dots,t_4)$ and define the \emph{double affine Hecke~algebra~(DAHA)} of type~$(C_1^\vee,C_1)$ to be the $\mathbb{C}_{q,\mathbf{t}}$-algebra $\mathcal{H}_{q,\mathbf{t}}(C_1^\vee,C_1)$ generated by variables $T_1,\dots,T_4$, and their inverses, subject to the relations 
\[
(T_i-t_i)(T_i+t_i^{-1})=0, \quad i=1,\dots,4, \quad \text{and} \quad T_4T_3T_2T_1=q^{-1}.
\]
The first relation implies that the element $e=(t_3+t_3^{-1})^{-1}(T_3+t_3^{-1})$ is an idempotent in~$\mathcal{H}_{q,\mathbf{t}}$, and the \emph{spherical~DAHA} of type $(C_1^\vee,C_1)$ is defined as the algebra $\mathcal{SH}_{q,\mathbf{t}}(C_1^\vee,C_1)=e\mathcal{H}_{q,\mathbf{t}}(C_1^\vee,C_1)e\subset\mathcal{H}_{q,\mathbf{t}}(C_1^\vee,C_1)$. We consider the elements 
\[
x=\left(T_4T_3+(T_4T_3\right)^{-1})e, \quad y=\left(T_3T_2+(T_3T_2)^{-1}\right)e, \quad z=\left(T_3T_1+(T_3T_1)^{-1}\right)e
\]
of $\mathcal{SH}_{q,\mathbf{t}}(C_1^\vee,C_1)$.

\begin{proposition}[\cite{BS18}, Theorem~2.7]
\label{prop:DAHAtypeCC}
The spherical DAHA $\mathcal{SH}_{q,\mathbf{t}}(C_1^\vee,C_1)$ is generated by the elements $x$,~$y$, and~$z$ subject to the relations 
\begin{align*}
q^{-1}xy-qyx &= \left(q^{-2}-q^2\right)z-\left(q^{-1}-q\right)L, \\
q^{-1}yz-qzy &= \left(q^{-2}-q^2\right)x-\left(q^{-1}-q\right)M, \\
q^{-1}zx-qxz &= \left(q^{-2}-q^2\right)y-\left(q^{-1}-q\right)N,
\end{align*}
and 
\begin{align*}
q^{-1}xyz = &q^{-2}x^2+q^2y^2+q^{-2}z^2-q^{-1}Mx-qNy-q^{-1}Lz \\
&-s_1^2-s_2^2-s_3^2-s_4^2+s_1s_2s_3s_4-\left(q+q^{-1}\right)^2
\end{align*}
where $s_1=t_1-t_1^{-1}$, $s_2=t_2-t_2^{-1}$, $s_3=q^{-1}t_3-qt_3^{-1}$, $s_4=t_4-t_4^{-1}$, and $L=s_2s_4+s_3s_1$, $M=s_1s_2+s_3s_4$, $N=s_1s_4+s_3s_2$.
\end{proposition}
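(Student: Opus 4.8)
The statement bundles three assertions: that $x$, $y$, $z$ actually lie in $\mathcal{SH}_{q,\mathbf{t}}(C_1^\vee,C_1)$; that they satisfy the four displayed relations; and that these relations are complete, i.e.\ the tautological surjection from the algebra $\mathcal{A}$ abstractly presented by generators $x,y,z$ and those relations onto $\mathcal{SH}_{q,\mathbf{t}}(C_1^\vee,C_1)$ is an isomorphism. The first assertion is the easy one: the quadratic relation $(T_3-t_3)(T_3+t_3^{-1})=0$ gives $T_3e=eT_3=t_3e$, and using this identity one checks directly that $x$, $y$, $z$ belong to the corner algebra $e\mathcal{H}_{q,\mathbf{t}}e=\mathcal{SH}_{q,\mathbf{t}}(C_1^\vee,C_1)$.

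To verify the relations, the plan is a direct computation inside $\mathcal{H}_{q,\mathbf{t}}(C_1^\vee,C_1)$. Setting $u=T_4T_3$ and $v=T_3T_2$, we have $x=(u+u^{-1})e$, $y=(v+v^{-1})e$, and, using $T_4T_3T_2T_1=q^{-1}$ together with $eT_3=T_3e=t_3e$, we may also write $z$ through $u$ and $v$ up to explicit scalar corrections. The three commutator relations and the cubic relation then amount to a bounded list of identities among Laurent monomials in $u$ and $v$, to be checked modulo the quadratic relations $T_i^{-1}=T_i-(t_i-t_i^{-1})$ and the idempotent identities above; collecting terms produces the combinations $s_i$, $L$, $M$, $N$ appearing on the right-hand sides. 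I expect this step to be lengthy but entirely mechanical, with no conceptual content.

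The substance of the proposition is completeness. The surjection $\pi\colon\mathcal{A}\to\mathcal{SH}_{q,\mathbf{t}}(C_1^\vee,C_1)$ is onto because $x$ and $y$ already generate the spherical subalgebra: in the faithful basic representation of $\mathcal{SH}_{q,\mathbf{t}}$ on the symmetric Laurent polynomials $\mathbb{C}_{q,\mathbf{t}}[X^{\pm1}]^{\mathbb{Z}_2}$, the element $x$ acts as multiplication by $X+X^{-1}$ and $y$ as the Askey--Wilson $q$-difference operator, and a leading-term analysis shows that these two operators exhaust the image of $\mathcal{SH}_{q,\mathbf{t}}$ (equivalently, one invokes Oblomkov's structural description of the rank-one DAHA of type $(C_1^\vee,C_1)$ and its spherical subalgebra). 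To see that $\pi$ is injective, pass to the classical limit $q^{\frac12}\to1$: there the three commutator relations collapse to $[x,y]=[y,z]=[z,x]=0$ and the cubic relation becomes the equation of an affine cubic surface in the coordinates $x,y,z$, so $\mathcal{A}|_{q=1}$ is the coordinate ring of that surface, which by the classical Fricke description is exactly the coordinate ring of the relative $\mathrm{SL}_2$-character variety of the four-holed sphere and therefore coincides with $\mathcal{SH}_{q,\mathbf{t}}|_{q=1}$. Combining this identification with flatness of $\mathcal{SH}_{q,\mathbf{t}}$ over $\mathbb{C}[q^{\pm\frac12},\mathbf{t}^{\pm1}]$ --- a PBW-type statement for the DAHA --- by way of an appropriate filtration forces $\pi$ to be an isomorphism.

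The main obstacle is precisely this flatness/PBW input: verifying the relations is bookkeeping and establishing generation is standard, but excluding additional relations requires an honest basis of $\mathcal{SH}_{q,\mathbf{t}}(C_1^\vee,C_1)$ matched against a normal form on $\mathcal{A}$. A clean way to organize this is to begin from the PBW basis $\{X^{m}T_{w}\}$ of $\mathcal{H}_{q,\mathbf{t}}(C_1^\vee,C_1)$, with $w$ running over the rank-one affine Weyl group, extract from it a basis of $e\mathcal{H}_{q,\mathbf{t}}e$, and compare it against the spanning set of $\mathcal{A}$ obtained by using the commutator relations to bring monomials into a fixed order and the cubic relation to eliminate any monomial involving all three of $x$, $y$, $z$; once the two sets are seen to have equal size in each filtration degree, injectivity of $\pi$ follows.
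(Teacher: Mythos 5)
The paper does not prove this proposition; it is quoted verbatim from Berest--Samuelson \cite{BS18}, Theorem~2.7 (which in turn rests on Oblomkov's and Terwilliger's structural results for the rank-one DAHA of type $(C_1^\vee,C_1)$ and the universal Askey--Wilson algebra). So there is no in-paper argument to compare against, and your proposal has to be judged as a free-standing proof sketch. As such, its architecture is the standard one and is essentially the route taken in the literature: check membership in $e\mathcal{H}e$ using $T_3e=eT_3=t_3e$, verify the four relations by direct computation, prove surjectivity of $\pi\colon\mathcal{A}\to\mathcal{SH}$ via generation by $x,y$, and prove injectivity by matching a normal form on $\mathcal{A}$ against a PBW-type basis of $e\mathcal{H}_{q,\mathbf{t}}(C_1^\vee,C_1)e$.

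The genuine gap is that the two load-bearing steps are invoked rather than carried out. First, the generation claim (``a leading-term analysis shows that these two operators exhaust the image'') is exactly Oblomkov's theorem and is not a one-line leading-term observation; as written it is an appeal to authority. Second, and more seriously, the entire content of the proposition is the absence of further relations, and your normal-form comparison is only described, not executed: you would need to (i) exhibit the explicit spanning set of $\mathcal{A}$ obtained after reordering and eliminating $xyz$ (namely the monomials $x^iy^jz^k$ with $\min(i,j,k)=0$), (ii) extract an actual basis of $e\mathcal{H}e$ from the basis $\{X^mT_w\}$ of $\mathcal{H}$, and (iii) match them degreewise --- none of which is done. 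Note also that your intermediate paragraph (classical limit plus flatness of $\mathcal{SH}$) does not by itself yield injectivity: one also needs flatness of $\mathcal{A}$ over $\mathbb{C}[q^{\pm\frac12},\mathbf{t}^{\pm1}]$, equivalently the upper bound on the size of $\mathcal{A}$ that you only obtain from the deferred normal-form argument, so that paragraph is logically subsumed by (and cannot replace) your final one. A smaller issue: rewriting $z=(T_3T_1+(T_3T_1)^{-1})e$ in terms of $u=T_4T_3$ and $v=T_3T_2$ puts copies of $T_3$ in the middle of words, where the identity $T_3e=t_3e$ does not apply, so the ``explicit scalar corrections'' are less innocent than the phrasing suggests --- this step is still mechanical, but it is where the $s_i$, $L$, $M$, $N$ actually come from and deserves to be exhibited.
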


Let us recall the polynomial representation of the spherical DAHA. We consider the $\mathbb{C}_{q,\mathbf{t}}$-linear operators $\sigma$,~$\tau\in\End\mathbb{C}_{q,\mathbf{t}}[X^{\pm1}]$ given by the formulas $(\sigma f)(X)=f(X^{-1})$ and $(\tau f)(X)=f(q^2X)$, and we define operators 
\begin{align*}
\widehat{T}_3 &= t_3+t_3^{-1}\frac{\left(1-t_3t_4X\right)\left(1+\frac{t_3}{t_4}X\right)}{1-X^2}(\sigma-1), \\
\widehat{T}_2 &= t_2+t_2^{-1}\frac{\left(1-qt_1t_2X^{-1}\right)\left(1+\frac{qt_2}{t_1}X^{-1}\right)}{1-q^2X^{-2}}(\sigma\tau-1),
\end{align*}
and $\widehat{T}_4=X^{-1}\widehat{T}_3^{-1}$, \ $\widehat{T}_1=q^{-1}\widehat{T}_2^{-1}X$. As explained in Section~2.22 of~\cite{NS04}, the rule $T_i\mapsto\widehat{T}_i$ gives a $\mathbb{C}_{q,\mathbf{t}}$-linear embedding of the DAHA $\mathcal{H}_{q,\mathbf{t}}(C_1^\vee,C_1)$ into $\End\mathbb{C}_{q,\mathbf{t}}[X^{\pm1}]$. There is a $\mathbb{Z}_2$-action on the ring $\mathbb{C}_{q,\mathbf{t}}[X^{\pm1}]$ where the nontrivial element acts by $X\mapsto X^{-1}$ so that the corresponding ring of invariants is $\mathbb{C}_{q,\mathbf{t}}[X^{\pm1}]^{\mathbb{Z}_2}\cong\mathbb{C}_{q,\mathbf{t}}[X+X^{-1}]$.

\begin{proposition}
\label{prop:polyrepCC}
There is a $\mathbb{C}_{q,\mathbf{t}}$-algebra embedding $\mathcal{SH}_{q,\mathbf{t}}(C_1^\vee,C_1)\rightarrow\End\mathbb{C}_{q,\mathbf{t}}[X^{\pm1}]^{\mathbb{Z}_2}$ that maps 
\begin{equation}
\label{eqn:polyrepCC}
\begin{split}
x &\mapsto X+X^{-1}, \\
y &\mapsto U(X)(\tau-1)+U(X^{-1})(\tau^{-1}-1)+f_y(X), \\
z &\mapsto qXU(X)(\tau-1)+qX^{-1}U(X^{-1})(\tau^{-1}-1)+f_z(X),
\end{split}
\end{equation}
where 
\[
U(X)=t_2^{-1}t_3^{-1}\frac{\left(1-t_3t_4X\right)\left(1+\frac{t_3}{t_4}X\right)\left(1-qt_1t_2X\right)\left(1+\frac{qt_2}{t_1}X\right)}{\left(1-X^2\right)\left(1-q^2X^2\right)}
\]
and $f_y(X)$,~$f_z(X)\in\mathbb{C}_{q,\mathbf{t}}[X^{\pm1}]^{\mathbb{Z}_2}$.
\end{proposition}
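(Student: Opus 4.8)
The plan is to deduce the polynomial representation of the spherical DAHA from the already-established polynomial representation of the full DAHA $\mathcal{H}_{q,\mathbf{t}}(C_1^\vee,C_1)$ via the embedding $T_i\mapsto\widehat{T}_i$, combined with the generator formulas $x=(T_4T_3+(T_4T_3)^{-1})e$, $y=(T_3T_2+(T_3T_2)^{-1})e$, $z=(T_3T_1+(T_3T_1)^{-1})e$ for the spherical subalgebra. First I would record that the idempotent $e=(t_3+t_3^{-1})^{-1}(\widehat{T}_3+t_3^{-1})$ acts on $\mathbb{C}_{q,\mathbf{t}}[X^{\pm1}]$ as the projection onto the $\mathbb{Z}_2$-invariants $\mathbb{C}_{q,\mathbf{t}}[X^{\pm1}]^{\mathbb{Z}_2}$; this follows from the explicit formula for $\widehat{T}_3$, since $\widehat{T}_3+t_3^{-1}$ is a multiple of $1+($ the coefficient $)(\sigma-1)$ and one checks directly that $e\cdot 1 = 1$, $e$ fixes $\mathbb{Z}_2$-invariants, and $e$ is idempotent (or simply invoke that the spherical DAHA acts on $e\cdot\mathbb{C}_{q,\mathbf{t}}[X^{\pm1}]$). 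Consequently the composite $\mathcal{SH}_{q,\mathbf{t}}\hookrightarrow\mathcal{H}_{q,\mathbf{t}}\hookrightarrow\End\mathbb{C}_{q,\mathbf{t}}[X^{\pm1}]$ lands in operators preserving $\mathbb{C}_{q,\mathbf{t}}[X^{\pm1}]^{\mathbb{Z}_2}$, giving the asserted embedding $\mathcal{SH}_{q,\mathbf{t}}\to\End\mathbb{C}_{q,\mathbf{t}}[X^{\pm1}]^{\mathbb{Z}_2}$; injectivity is inherited from the faithfulness of the polynomial representation of $\mathcal{H}_{q,\mathbf{t}}$ already asserted in the excerpt.

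Next I would compute the images of $x$, $y$, $z$. For $x$: using $\widehat{T}_4=X^{-1}\widehat{T}_3^{-1}$, the element $\widehat{T}_4\widehat{T}_3 = X^{-1}$, so $\widehat{T}_4\widehat{T}_3+(\widehat{T}_4\widehat{T}_3)^{-1}=X^{-1}+X$ as a multiplication operator, which already commutes with $\sigma$, hence $x$ acts as multiplication by $X+X^{-1}$ on the invariants — matching the claim. For $y$: $\widehat{T}_3\widehat{T}_2$ and its inverse must be computed from the given rational formulas; $\widehat{T}_2$ involves $\sigma\tau$, so $\widehat{T}_3\widehat{T}_2$ will be a combination of $1,\sigma,\tau,\sigma\tau$-type operators with rational-function coefficients, and after symmetrizing (multiplying by $e$ and restricting to invariants) the $\sigma$-parts fold into the $\tau$ and $\tau^{-1}$ parts. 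The key bookkeeping is to collect the coefficient of $(\tau-1)$ and verify it equals $U(X)$ as defined, with the coefficient of $(\tau^{-1}-1)$ being its image under $X\mapsto X^{-1}$; the leftover multiplication operator is automatically $\mathbb{Z}_2$-invariant and is named $f_y(X)$. For $z$: use $\widehat{T}_1=q^{-1}\widehat{T}_2^{-1}X$, so $\widehat{T}_3\widehat{T}_1 = q^{-1}\widehat{T}_3\widehat{T}_2^{-1}X$; the extra factor of $X$ (pushed past $\tau$, which multiplies $X$ by $q^2$) and the constant $q^{-1}$ account for the prefactor $qX$ appearing in front of $U(X)(\tau-1)$, and similarly $qX^{-1}$ in front of $U(X^{-1})(\tau^{-1}-1)$, with remaining invariant multiplication operator $f_z(X)$. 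One should remark that the formulas in Proposition~\ref{prop:DAHAtypeCC} together with the known action of $x$ pin down $f_y$ and $f_z$ up to this symmetrization, so it suffices to extract the $\tau^{\pm1}$-coefficients.

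The main obstacle I anticipate is the second and third computations: honestly multiplying out $\widehat{T}_3\widehat{T}_2$ (and its inverse, and the $X$-twisted version for $z$) as operators on $\mathbb{C}_{q,\mathbf{t}}[X^{\pm1}]$, keeping careful track of how $\sigma$ and $\tau$ move past the rational coefficients — e.g. $\sigma$ sends a coefficient $c(X)$ to $c(X^{-1})$ and $\tau$ sends it to $c(q^2 X)$ — and then verifying that after applying $e$ the four-term operator collapses to exactly the two-term $\tau^{\pm1}$ form with coefficient precisely the stated $U(X)$. This is a finite but delicate rational-function identity; the cleanest route is probably to conjugate by a suitable scalar function (a "Gaussian''-type renormalization) so that $\widehat{T}_2$ and $\widehat{T}_3$ take a symmetric Demazure–Lusztig form, making the symmetrization transparent, or else to cite Section~2.22 of~\cite{NS04} for the structure and only verify the final coefficient. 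I would present the $x$ computation in full, sketch the $y$ and $z$ computations by identifying the operator type of each term and the source of the prefactors $qX^{\pm1}$, and relegate the verification that the $(\tau-1)$-coefficient equals $U(X)$ to a direct substitution, noting that the remaining $\mathbb{Z}_2$-invariant terms are collected into $f_y(X)$ and $f_z(X)$ whose precise form is not needed for what follows.
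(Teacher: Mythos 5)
Your proposal is correct in outline and starts from the same foundation as the paper: restrict the faithful polynomial representation $T_i\mapsto\widehat{T}_i$ to $e\mathcal{H}e$ acting on the image of $e$ (which, as you note, is exactly $\mathbb{C}_{q,\mathbf{t}}[X^{\pm1}]^{\mathbb{Z}_2}$), and read off $\widehat{x}=X+X^{-1}$ from $\widehat{T}_4\widehat{T}_3=X^{-1}$. Where you diverge is in how $y$ and $z$ are handled. For $y$ the paper does not redo the computation at all but cites Section~5.8 of \cite{NS04} for the formula with coefficient $U(X)$; your direct expansion of $\widehat{T}_3\widehat{T}_2+(\widehat{T}_3\widehat{T}_2)^{-1}$ and folding of the $\sigma$-terms would work but is exactly the delicate bookkeeping you flag, and citing the reference (as you yourself suggest as a fallback) is the cleaner route. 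For $z$ the paper avoids your computation of $\widehat{T}_3\widehat{T}_1=q^{-1}\widehat{T}_3\widehat{T}_2^{-1}X$ entirely: it first observes that $\widehat{z}$ lies in the $\mathbb{C}_{q,\mathbf{t}}(X)$-span of $1,\tau,\tau^{-1}$, writes $\widehat{z}=C_1(\tau-1)+C_2(\tau^{-1}-1)+C_3$ with $C_3=\widehat{z}(1)$ (which is how one sees $f_z$ is a genuine symmetric \emph{Laurent polynomial}, not merely a $\mathbb{Z}_2$-invariant rational function --- a point your sketch should make explicit for both $f_y$ and $f_z$), and then extracts $C_1=qXU(X)$, $C_2=qX^{-1}U(X^{-1})$ by comparing $\tau^{\pm1}$-coefficients in the relation $q^{-1}xy-qyx=(q^{-2}-q^2)z-(q^{-1}-q)L$ of Proposition~\ref{prop:DAHAtypeCC}. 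This is precisely the labor-saving device you gesture at in your final remark but do not adopt as the main route; it buys you freedom from inverting $\widehat{T}_2$ and reduces the $z$-case to a short commutator calculation with the already-known $\widehat{x}$ and $\widehat{y}$. Both approaches are valid; yours trades a citation and an algebraic shortcut for two longer direct operator computations.
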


\begin{proof}
By restricting the polynomial representation defined above, one obtains an injective $\mathbb{C}_{q,\mathbf{t}}$-algebra homomorphism $\mathcal{SH}_{q,\mathbf{t}}(C_1^\vee,C_1)\rightarrow\End\mathbb{C}_{q,\mathbf{t}}[X^{\pm1}]^{\mathbb{Z}_2}$. Let $\widehat{x}$, $\widehat{y}$, $\widehat{z}$ denote the images of $x$, $y$, $z$, respectively, under this homomorphism. From the definitions of~$x$ and~$\widehat{T}_4$, one sees that $\widehat{x}=X+X^{-1}$. The formula for~$\widehat{y}$ is proved in Section~5.8 of~\cite{NS04}. The operator $\widehat{z}$ lies in the $\mathbb{C}_{q,\mathbf{t}}(X)$-vector space spanned by 1, $\tau$, and~$\tau^{-1}$ and can therefore be written 
\[
\widehat{z}=C_1(\tau-1)+C_2(\tau^{-1}-1)+C_3
\]
for unique coefficients $C_i\in\mathbb{C}_{q,\mathbf{t}}(X)$. Since $\widehat{z}$ preserves the space of symmetric Laurent polynomials, we have $f_z(X)\coloneqq C_3=\widehat{z}(1)\in\mathbb{C}_{q,\mathbf{t}}[X^{\pm1}]^{\mathbb{Z}_2}$. The first relation in~Proposition~\ref{prop:DAHAtypeCC} implies that $\widehat{z}=(q^{-2}-q^2)^{-1}(q^{-1}\widehat{x}\widehat{y}-q\widehat{y}\widehat{x})+C_4$ for some $C_4\in\mathbb{C}_{q,\mathbf{t}}(X)$. By computing the coefficients of $\tau$ and~$\tau^{-1}$ on the right hand side of this last equation, one finds that $C_1=qXU(X)$ and $C_2=qX^{-1}U(X^{-1})$. This completes the proof.
\end{proof}

\subsection{A representation of $\Sk_{A,\boldsymbol{\lambda}}(S_{0,4})$}

We will now use the spherical DAHA of type $(C_1^\vee,C_1)$ to define a representation of the relative skein algebra of a four-holed sphere.

\begin{proposition}
\label{prop:skein04DAHACCiso}
There is a $\mathbb{C}$-algebra embedding $\Sk_{A,\boldsymbol{\lambda}}(S_{0,4})\rightarrow\mathcal{SH}_{q,\mathbf{t}}(C_1^\vee,C_1)$ that maps $\alpha\mapsto x$, $\beta\mapsto y$, $\gamma\mapsto z$, $\lambda_j\mapsto \mathrm{i}t_j$ for~$j\neq3$, $\lambda_3\mapsto\mathrm{i}q^{-1}t_3$, and $A\mapsto q^{-\frac{1}{2}}$.
\end{proposition}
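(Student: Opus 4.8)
The plan is to compare the presentation of $\Sk_{A,\boldsymbol{\lambda}}(S_{0,4})$ from Proposition~\ref{prop:skeinS04} with the presentation of $\mathcal{SH}_{q,\mathbf{t}}(C_1^\vee,C_1)$ from Proposition~\ref{prop:DAHAtypeCC}, checking that the assignment $\alpha\mapsto x$, $\beta\mapsto y$, $\gamma\mapsto z$, together with the stated substitutions for the $\lambda_j$ and for $A$, carries one set of relations to the other. First I would record the effect of the substitution on the scalars: with $A=q^{-\frac12}$ we have $A^2=q^{-1}$, $A^4=q^{-2}$, $A^{-2}=q$, $A^{-4}=q^2$, so the left-hand sides $A^2\alpha\beta-A^{-2}\beta\alpha$ become $q^{-1}xy-qyx$, matching the DAHA commutators up to identifying the inhomogeneous terms. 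The heart of the computation is to check that the $\delta_i$-dependent terms on the skein side agree with the $s_i$-dependent terms $L,M,N$ and the constant in Proposition~\ref{prop:DAHAtypeCC} after the substitution $\lambda_j\mapsto \mathrm{i}t_j$ for $j\neq 3$ and $\lambda_3\mapsto \mathrm{i}q^{-1}t_3$.

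The key translation is between the boundary parameters and the $s_i$. Under Figure~\ref{fig:puncturerelation} the peripheral curve $\delta_i$ maps to $-(\lambda_i+\lambda_i^{-1})$ in $\Sk_{A,\boldsymbol{\lambda}}(S_{0,4})$; with $\lambda_j=\mathrm{i}t_j$ one gets $\lambda_j+\lambda_j^{-1}=\mathrm{i}(t_j-t_j^{-1})=\mathrm{i}s_j$ for $j\neq 3$, and with $\lambda_3=\mathrm{i}q^{-1}t_3$ one gets $\lambda_3+\lambda_3^{-1}=\mathrm{i}(q^{-1}t_3-qt_3^{-1})=\mathrm{i}s_3$, so uniformly $\delta_i\mapsto -\mathrm{i}s_i$. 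Hence $\delta_a\delta_b\mapsto -s_as_b$ and, reading off the combinations in Proposition~\ref{prop:skeinS04}, $\delta_2\delta_4+\delta_1\delta_3\mapsto -(s_2s_4+s_3s_1)=-L$, $\delta_1\delta_2+\delta_3\delta_4\mapsto -M$, $\delta_1\delta_4+\delta_2\delta_3\mapsto -N$. Plugging into the first skein relation: $q^{-1}xy-qyx = (q^{-2}-q^2)z + (q^{-1}-q)(-L) = (q^{-2}-q^2)z-(q^{-1}-q)L$, exactly the first DAHA relation; the other two commutator relations follow identically with $M$ and $N$. For the cubic relation I would similarly substitute: $A^4\to q^{-2}$ etc., the mixed terms $A^2(\delta_1\delta_2+\delta_3\delta_4)\alpha\mapsto q^{-1}(-M)x=-q^{-1}Mx$, $A^{-2}(\delta_1\delta_4+\delta_2\delta_3)\beta\mapsto q(-N)y=-qNy$, $A^2(\delta_2\delta_4+\delta_1\delta_3)\gamma\mapsto -q^{-1}Lz$, the quadratic term $\delta_1^2+\delta_2^2+\delta_3^2+\delta_4^2\mapsto -(s_1^2+s_2^2+s_3^2+s_4^2)$, the quartic term $\delta_1\delta_2\delta_3\delta_4\mapsto s_1s_2s_3s_4$, and the constant $-(A^2+A^{-2})^2\mapsto -(q^{-1}+q)^2=-(q+q^{-1})^2$. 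This reproduces the cubic relation of Proposition~\ref{prop:DAHAtypeCC} verbatim. Since both algebras are presented by generators and these relations, the map $\alpha,\beta,\gamma,\lambda_j,A\mapsto x,y,z,\ldots,q^{-\frac12}$ is a well-defined $\mathbb{C}$-algebra homomorphism.

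It remains to prove injectivity. Here I would argue that the scalar substitution is itself injective on the base ring: the images $\mathrm{i}t_j$ ($j\neq3$), $\mathrm{i}q^{-1}t_3$, $q^{-\frac12}$ of the generators $\lambda_1,\dots,\lambda_4,A$ of the Laurent polynomial ring $\mathbb{C}[A^{\pm1},\lambda_1^{\pm1},\dots,\lambda_4^{\pm1}]$ are algebraically independent over $\mathbb{C}$ inside $\mathbb{C}_{q,\mathbf{t}}=\mathbb{C}(q^{\frac12},t_1,\dots,t_4)$, so the induced map on base rings is an embedding. Over this base, $\Sk_{A,\boldsymbol{\lambda}}(S_{0,4})$ is a free module with the Poincaré--Birkhoff--Witt–type monomial basis coming from the presentation (the relations let one reorder $\alpha,\beta,\gamma$ and eliminate higher powers of one generator), and the analogous monomials form a basis of $\mathcal{SH}_{q,\mathbf{t}}(C_1^\vee,C_1)$ over $\mathbb{C}_{q,\mathbf{t}}$; the homomorphism sends basis to basis, hence is injective. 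The main obstacle I anticipate is precisely this last point: verifying that the skein-side relations are not merely mapped into, but give a faithful copy of, the DAHA relations, i.e. controlling the two sides as free modules of matching rank and checking no extra collapse occurs under the non-obvious substitution $\lambda_3\mapsto \mathrm{i}q^{-1}t_3$ (the asymmetry in the $\delta_3$ parameter is what makes the $s_3=q^{-1}t_3-qt_3^{-1}$ shift work out, and it must be tracked carefully through every relation). Everything else is the bookkeeping of the scalar dictionary $\delta_i\mapsto-\mathrm{i}s_i$, $A\mapsto q^{-1/2}$ sketched above.
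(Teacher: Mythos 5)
Your proposal is correct and matches the paper's approach exactly: the paper's proof is the one-line observation that the result "follows from the presentations of these algebras given in Propositions~\ref{prop:skeinS04} and~\ref{prop:DAHAtypeCC}," and your computation (the dictionary $\delta_i\mapsto-\mathrm{i}s_i$, $A^{2}\mapsto q^{-1}$, and the term-by-term matching of the three commutator relations and the cubic relation) is precisely the verification being left implicit there. Your injectivity argument via freeness of both algebras over their base rings on matching monomial bases, combined with the embedding of base rings into $\mathbb{C}_{q,\mathbf{t}}$, is the standard way to complete this and is consistent with what the paper takes for granted.
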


\begin{proof}
This follows from the presentations of these algebras given in Propositions~\ref{prop:skeinS04} and~\ref{prop:DAHAtypeCC}.
\end{proof}

\begin{proposition}
\label{prop:polyrepS04}
There is a $\mathbb{C}$-algebra embedding $\Sk_{A,\boldsymbol{\lambda}}(S_{0,4})\rightarrow\End\mathbb{C}_{q,\mathbf{t}}[X^{\pm1}]^{\mathbb{Z}_2}$ that maps $A\mapsto q^{-\frac{1}{2}}$, $\lambda_1\mapsto-t_1$, $\lambda_2\mapsto t_2$, $\lambda_3\mapsto t_3$, $\lambda_4\mapsto-t_4$, and 
\begin{equation}
\label{eqn:polyrepS04}
\begin{split}
\alpha &\mapsto X+X^{-1}, \\
\beta &\mapsto U(X)(\tau-1)+U(X^{-1})(\tau^{-1}-1)+f_y(X), \\
\gamma &\mapsto qXU(X)(\tau-1)+qX^{-1}U(X^{-1})(\tau^{-1}-1)+f_z(X),
\end{split}
\end{equation}
where 
\[
U(X)=t_2^{-1}t_3^{-1}\frac{\left(1-qt_3t_4X\right)\left(1-q\frac{t_3}{t_4}X\right)\left(1-qt_1t_2X\right)\left(1-q\frac{t_2}{t_1}X\right)}{\left(1-X^2\right)\left(1-q^2X^2\right)}
\]
and $f_y(X)$,~$f_z(X)\in\mathbb{C}_{q,\mathbf{t}}[X^{\pm1}]^{\mathbb{Z}_2}$.
\end{proposition}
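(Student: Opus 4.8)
The plan is to obtain the desired embedding by composing the embedding $\Sk_{A,\boldsymbol{\lambda}}(S_{0,4})\hookrightarrow\mathcal{SH}_{q,\mathbf{t}}(C_1^\vee,C_1)$ of Proposition~\ref{prop:skein04DAHACCiso} with the polynomial representation of the spherical DAHA from Proposition~\ref{prop:polyrepCC}, and then to track carefully how the substitution of parameters transforms the rational functions appearing in the formulas. Concretely: Proposition~\ref{prop:skein04DAHACCiso} sends $\alpha\mapsto x$, $\beta\mapsto y$, $\gamma\mapsto z$, $A\mapsto q^{-1/2}$, and $\lambda_j\mapsto\mathrm{i}t_j$ for $j\neq 3$, $\lambda_3\mapsto\mathrm{i}q^{-1}t_3$. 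Proposition~\ref{prop:polyrepCC} then sends $x,y,z$ into $\End\mathbb{C}_{q,\mathbf{t}}[X^{\pm1}]^{\mathbb{Z}_2}$ by the stated formulas. So the composite is an embedding, and the only work is to rewrite everything.

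The reparametrization step is the heart of it. Observe that the target claims $\lambda_1\mapsto -t_1$, $\lambda_2\mapsto t_2$, $\lambda_3\mapsto t_3$, $\lambda_4\mapsto -t_4$, whereas the composite literally gives $\lambda_1\mapsto \mathrm{i}t_1'$, $\lambda_2\mapsto\mathrm{i}t_2'$, $\lambda_3\mapsto\mathrm{i}q^{-1}t_3'$, $\lambda_4\mapsto\mathrm{i}t_4'$ in terms of the DAHA parameters $t_i'$ used in Propositions~\ref{prop:DAHAtypeCC}--\ref{prop:polyrepCC}. The idea is therefore to perform an automorphism of the base field $\mathbb{C}_{q,\mathbf{t}}$ of the form $t_1'\mapsto -\mathrm{i}(-t_1)=\mathrm{i}t_1$, $t_2'\mapsto -\mathrm{i}t_2$, $t_3'\mapsto -\mathrm{i}qt_3$, $t_4'\mapsto -\mathrm{i}(-t_4)=\mathrm{i}t_4$ (one should double-check signs so that the prescribed images of $\lambda_j$ come out right), leaving $q^{1/2}$ fixed. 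Under this automorphism one recomputes $U(X)$: the factor $(1-t_3't_4'X)$ becomes $(1-(-\mathrm{i}qt_3)(\mathrm{i}t_4)X)=(1-qt_3t_4X)$, the factor $(1+\tfrac{t_3'}{t_4'}X)$ becomes $(1+\tfrac{-\mathrm{i}qt_3}{\mathrm{i}t_4}X)=(1-q\tfrac{t_3}{t_4}X)$, the factor $(1-qt_1't_2'X)$ becomes $(1-q(\mathrm{i}t_1)(-\mathrm{i}t_2)X)=(1-qt_1t_2X)$, and $(1+\tfrac{qt_2'}{t_1'}X)$ becomes $(1-q\tfrac{t_2}{t_1}X)$, which is exactly the $U(X)$ displayed in Proposition~\ref{prop:polyrepS04}; the prefactor $t_2'^{-1}t_3'^{-1}=(-\mathrm{i}t_2)^{-1}(-\mathrm{i}qt_3)^{-1}=-t_2^{-1}t_3^{-1}q^{-1}$ should be absorbed consistently (this is the kind of bookkeeping that determines exactly which signs appear, and I would do it carefully rather than glossing over it). The functions $f_y,f_z$ remain in $\mathbb{C}_{q,\mathbf{t}}[X^{\pm1}]^{\mathbb{Z}_2}$ since the automorphism preserves that ring, so they simply get relabeled.

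I expect the main obstacle to be purely bookkeeping: getting every factor of $\mathrm{i}$, $q$, and every sign to land correctly, since the statement involves a $\mathbb{Z}_2^4$ worth of sign ambiguities in the square roots $\lambda_j=\mathrm{i}t_j$ versus $\lambda_j=\pm t_j$, together with the asymmetric role of $t_3$ (which carries an extra $q^{-1}$). The cleanest way to manage this is to verify consistency at the level of the presentation rather than the operators: one checks that under the substitutions $A\mapsto q^{-1/2}$ and $\lambda_1\mapsto -t_1$, $\lambda_2\mapsto t_2$, $\lambda_3\mapsto t_3$, $\lambda_4\mapsto -t_4$, the defining relations of $\Sk_{A,\boldsymbol{\lambda}}(S_{0,4})$ from Proposition~\ref{prop:skeinS04} become precisely the relations of $\mathcal{SH}_{q,\mathbf{t}}(C_1^\vee,C_1)$ from Proposition~\ref{prop:DAHAtypeCC} with $s_1=t_1-t_1^{-1}$, $s_2=t_2-t_2^{-1}$, $s_3=q^{-1}t_3-qt_3^{-1}$, $s_4=t_4-t_4^{-1}$ (note, e.g., that $\delta_j=-\Tr_{\gamma_j}=-(\lambda_j+\lambda_j^{-1})$ should match $\pm s_j$ after the substitution, with the sign on $\lambda_1,\lambda_4$ accounting for $s_1,s_4$ versus $s_2$). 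Once the presentation-level match is confirmed, the embedding into $\End\mathbb{C}_{q,\mathbf{t}}[X^{\pm1}]^{\mathbb{Z}_2}$ and the explicit formulas follow immediately from Proposition~\ref{prop:polyrepCC} after the reparametrization, completing the proof.
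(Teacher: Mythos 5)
Your proposal is correct and takes essentially the same route as the paper: the paper's proof likewise composes Proposition~\ref{prop:skein04DAHACCiso} with Proposition~\ref{prop:polyrepCC} and then rescales the parameters by $t_1\mapsto\mathrm{i}t_1$, $t_2\mapsto-\mathrm{i}t_2$, $t_3\mapsto-\mathrm{i}qt_3$, $t_4\mapsto\mathrm{i}t_4$, which is exactly the reparametrization you work out. The prefactor issue you flag (the substitution sends $t_2^{-1}t_3^{-1}$ to $-q^{-1}t_2^{-1}t_3^{-1}$) is real bookkeeping that the paper's one-line proof does not address, but it does not change the structure of the argument.
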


\begin{proof}
By composing the maps in Propositions~\ref{prop:polyrepCC} and~\ref{prop:skein04DAHACCiso}, we obtain an injective $\mathbb{C}$-algebra homomorphism $\Sk_{A,\boldsymbol{\lambda}}(S_{0,4})\rightarrow\End\mathbb{C}_{q,\mathbf{t}}[X^{\pm1}]^{\mathbb{Z}_2}$ mapping $A\mapsto q^{-\frac{1}{2}}$, $\lambda_1\mapsto\mathrm{i}t_1$, $\lambda_2\mapsto\mathrm{i}t_2$, $\lambda_3\mapsto\mathrm{i}q^{-1}t_3$, $\lambda_4\mapsto\mathrm{i}t_4$, and mapping the generators $\alpha$, $\beta$, $\gamma$ to the operators on the right hand side of~\eqref{eqn:polyrepCC}. Rescaling the scalars by $t_1\mapsto\mathrm{i}t_1$, $t_2\mapsto-\mathrm{i}t_2$, $t_3\mapsto-\mathrm{i}qt_3$, $t_4\mapsto\mathrm{i}t_4$, we obtain a map with the required values on generators.
\end{proof}

Using the formulas~\eqref{eqn:polyrepS04}, we can compute the operators corresponding to other curves on~$S_{0,4}$. Namely, we consider the curves $\gamma_n$~($n\in\mathbb{Z}$) illustrated in Figure~\ref{fig:S04curves}. Note that $\gamma_0$ and~$\gamma_1$ are the curves denoted~$\beta$ and~$\gamma$ above.

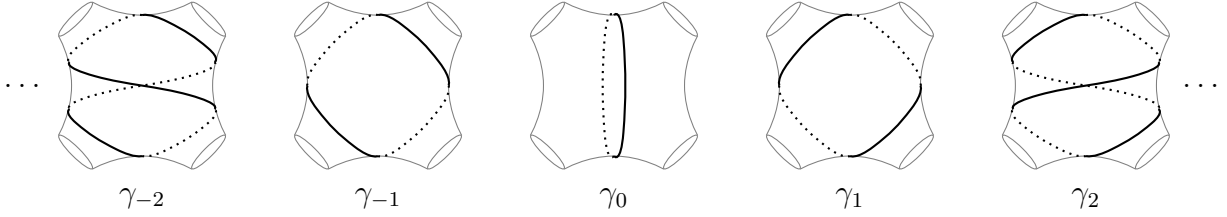
\begin{figure}[ht]
\begin{tikzpicture}[scale=1.25]
\clip(-1.45,-1.4) rectangle (1.2,1.25);
\draw[gray, thin, rotate=45] (1,0) ellipse (0.05 and 0.25);
\draw[gray, thin, rotate=135] (1,0) ellipse (0.05 and 0.25);
\draw[gray, thin, rotate=-135] (1,0) ellipse (0.05 and 0.25);
\draw[gray, thin, rotate=-45] (1,0) ellipse (0.05 and 0.25);
\draw[gray, thin] plot [smooth, tension=1] coordinates { (0.53,0.89) (0,0.75) (-0.53,0.89)};
\draw[gray, thin] plot [smooth, tension=1] coordinates { (-0.89,0.53) (-0.75,0) (-0.89,-0.53)};
\draw[gray, thin] plot [smooth, tension=1] coordinates { (-0.53,-0.89) (0,-0.75) (0.53,-0.89)};
\draw[gray, thin] plot [smooth, tension=1] coordinates { (0.89,-0.53) (0.75,0) (0.89,0.53)};
\draw[black, thick, dotted] plot [smooth, tension=1] coordinates { (0,0.75) (-0.27,0.68) (-0.7,0.4) (-0.77,0.25)};
\draw[black, thick] plot [smooth, tension=1] coordinates { (0,0.75) (0.27,0.68) (0.7,0.4) (0.77,0.25)};
\draw[black, thick, dotted] plot [smooth, tension=1] coordinates { (0,-0.75) (0.27,-0.68) (0.7,-0.4) (0.77,-0.25)};
\draw[black, thick] plot [smooth, tension=1] coordinates { (0,-0.75) (-0.27,-0.68) (-0.7,-0.4) (-0.77,-0.25)};
\draw[black, thick] plot [smooth, tension=1] coordinates { (-0.77,0.25) (-0.5,0.1) (0.5,-0.1) (0.77,-0.25)}; 
\draw[black, thick, dotted] plot [smooth, tension=1] coordinates { (0.77,0.25) (0.5,0.1) (-0.5,-0.1) (-0.77,-0.25)}; 
\node at (0,-1.2) {$\gamma_{-2}$};
\node at (-1.25,0) {$\dots$};
\end{tikzpicture}
\begin{tikzpicture}[scale=1.25]
\clip(-1.2,-1.4) rectangle (1.2,1.25);
\draw[gray, thin, rotate=45] (1,0) ellipse (0.05 and 0.25);
\draw[gray, thin, rotate=135] (1,0) ellipse (0.05 and 0.25);
\draw[gray, thin, rotate=-135] (1,0) ellipse (0.05 and 0.25);
\draw[gray, thin, rotate=-45] (1,0) ellipse (0.05 and 0.25);
\draw[gray, thin] plot [smooth, tension=1] coordinates { (0.53,0.89) (0,0.75) (-0.53,0.89)};
\draw[gray, thin] plot [smooth, tension=1] coordinates { (-0.89,0.53) (-0.75,0) (-0.89,-0.53)};
\draw[gray, thin] plot [smooth, tension=1] coordinates { (-0.53,-0.89) (0,-0.75) (0.53,-0.89)};
\draw[gray, thin] plot [smooth, tension=1] coordinates { (0.89,-0.53) (0.75,0) (0.89,0.53)};
\draw[black, thick, dotted] plot [smooth, tension=1] coordinates { (0,0.75) (-0.25,0.65) (-0.65,0.25) (-0.75,0)};
\draw[black, thick, dotted] plot [smooth, tension=1] coordinates { (0.75,0) (0.65,-0.25) (0.25,-0.65) (0,-0.75)};
\draw[black, thick] plot [smooth, tension=1] coordinates { (0.75,0) (0.65,0.25) (0.25,0.65) (0,0.75)};
\draw[black, thick] plot [smooth, tension=1] coordinates { (-0.75,0) (-0.65,-0.25) (-0.25,-0.65) (0,-0.75)};
\node at (0,-1.2) {$\gamma_{-1}$};
\end{tikzpicture}
\begin{tikzpicture}[scale=1.25]
\clip(-1.2,-1.4) rectangle (1.2,1.25);
\draw[gray, thin, rotate=45] (1,0) ellipse (0.05 and 0.25);
\draw[gray, thin, rotate=135] (1,0) ellipse (0.05 and 0.25);
\draw[gray, thin, rotate=-135] (1,0) ellipse (0.05 and 0.25);
\draw[gray, thin, rotate=-45] (1,0) ellipse (0.05 and 0.25);
\draw[gray, thin] plot [smooth, tension=1] coordinates { (0.53,0.89) (0,0.75) (-0.53,0.89)};
\draw[gray, thin] plot [smooth, tension=1] coordinates { (-0.89,0.53) (-0.75,0) (-0.89,-0.53)};
\draw[gray, thin] plot [smooth, tension=1] coordinates { (-0.53,-0.89) (0,-0.75) (0.53,-0.89)};
\draw[gray, thin] plot [smooth, tension=1] coordinates { (0.89,-0.53) (0.75,0) (0.89,0.53)};
\draw[black, thick, dotted] plot [smooth, tension=1] coordinates { (0,-0.75) (-0.1,-0.5) (-0.1,0.5) (0,0.75)};
\draw[black, thick] plot [smooth, tension=1] coordinates { (0,-0.75) (0.1,-0.5) (0.1,0.5) (0,0.75)};
\node at (0,-1.2) {$\gamma_0$};
\end{tikzpicture}
\begin{tikzpicture}[scale=1.25]
\clip(-1.2,-1.4) rectangle (1.2,1.25);
\draw[gray, thin, rotate=45] (1,0) ellipse (0.05 and 0.25);
\draw[gray, thin, rotate=135] (1,0) ellipse (0.05 and 0.25);
\draw[gray, thin, rotate=-135] (1,0) ellipse (0.05 and 0.25);
\draw[gray, thin, rotate=-45] (1,0) ellipse (0.05 and 0.25);
\draw[gray, thin] plot [smooth, tension=1] coordinates { (0.53,0.89) (0,0.75) (-0.53,0.89)};
\draw[gray, thin] plot [smooth, tension=1] coordinates { (-0.89,0.53) (-0.75,0) (-0.89,-0.53)};
\draw[gray, thin] plot [smooth, tension=1] coordinates { (-0.53,-0.89) (0,-0.75) (0.53,-0.89)};
\draw[gray, thin] plot [smooth, tension=1] coordinates { (0.89,-0.53) (0.75,0) (0.89,0.53)};
\draw[black, thick] plot [smooth, tension=1] coordinates { (0,0.75) (-0.25,0.65) (-0.65,0.25) (-0.75,0)};
\draw[black, thick] plot [smooth, tension=1] coordinates { (0.75,0) (0.65,-0.25) (0.25,-0.65) (0,-0.75)};
\draw[black, thick, dotted] plot [smooth, tension=1] coordinates { (0.75,0) (0.65,0.25) (0.25,0.65) (0,0.75)};
\draw[black, thick, dotted] plot [smooth, tension=1] coordinates { (-0.75,0) (-0.65,-0.25) (-0.25,-0.65) (0,-0.75)};
\node at (0,-1.2) {$\gamma_1$};
\end{tikzpicture}
\begin{tikzpicture}[scale=1.25]
\clip(-1.2,-1.4) rectangle (1.45,1.25);
\draw[gray, thin, rotate=45] (1,0) ellipse (0.05 and 0.25);
\draw[gray, thin, rotate=135] (1,0) ellipse (0.05 and 0.25);
\draw[gray, thin, rotate=-135] (1,0) ellipse (0.05 and 0.25);
\draw[gray, thin, rotate=-45] (1,0) ellipse (0.05 and 0.25);
\draw[gray, thin] plot [smooth, tension=1] coordinates { (0.53,0.89) (0,0.75) (-0.53,0.89)};
\draw[gray, thin] plot [smooth, tension=1] coordinates { (-0.89,0.53) (-0.75,0) (-0.89,-0.53)};
\draw[gray, thin] plot [smooth, tension=1] coordinates { (-0.53,-0.89) (0,-0.75) (0.53,-0.89)};
\draw[gray, thin] plot [smooth, tension=1] coordinates { (0.89,-0.53) (0.75,0) (0.89,0.53)};
\draw[black, thick] plot [smooth, tension=1] coordinates { (0,0.75) (-0.27,0.68) (-0.7,0.4) (-0.77,0.25)};
\draw[black, thick, dotted] plot [smooth, tension=1] coordinates { (0,0.75) (0.27,0.68) (0.7,0.4) (0.77,0.25)};
\draw[black, thick] plot [smooth, tension=1] coordinates { (0,-0.75) (0.27,-0.68) (0.7,-0.4) (0.77,-0.25)};
\draw[black, thick, dotted] plot [smooth, tension=1] coordinates { (0,-0.75) (-0.27,-0.68) (-0.7,-0.4) (-0.77,-0.25)};
\draw[black, thick, dotted] plot [smooth, tension=1] coordinates { (-0.77,0.25) (-0.5,0.1) (0.5,-0.1) (0.77,-0.25)}; 
\draw[black, thick] plot [smooth, tension=1] coordinates { (0.77,0.25) (0.5,0.1) (-0.5,-0.1) (-0.77,-0.25)}; 
\node at (0,-1.2) {$\gamma_2$};
\node at (1.25,0) {$\dots$};
\end{tikzpicture}
\caption{A family of curves on $S_{0,4}$.\label{fig:S04curves}}
\end{figure}

\begin{lemma}
\label{lem:imageS04curves}
The representation of Proposition~\ref{prop:polyrepS04} maps 
\[
\gamma_n\mapsto q^nX^nU(X)(\tau-1)+q^nX^{-n}U(X^{-1})(\tau^{-1}-1)+f_n(X)
\]
where $U$ is the operator defined there and $f_n(X)\in\mathbb{C}_{q,\mathbf{t}}[X^{\pm1}]^{\mathbb{Z}_2}$.
\end{lemma}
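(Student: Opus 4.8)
The plan is to induct on $|n|$. The base cases $n=0$ and $n=1$ are immediate, since $\gamma_0=\beta$ and $\gamma_1=\gamma$, so the asserted formula is exactly Proposition~\ref{prop:polyrepS04} with $f_0(X)=f_y(X)$ and $f_1(X)=f_z(X)$. The inductive step rests on a three-term recursion for the $\gamma_n$ inside $\Sk_{A,\boldsymbol{\lambda}}(S_{0,4})$. Consecutive curves $\gamma_n$ and $\gamma_{n+1}$ are related by a homeomorphism of $S_{0,4}$ fixing $\alpha$ (the half-twist exchanging the two boundary components enclosed by $\alpha$), so $i(\alpha,\gamma_n)=i(\alpha,\gamma_0)=i(\alpha,\beta)=2$ for every $n$. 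Resolving the two crossings of the product $\alpha\gamma_n$ by the Kauffman relation of Figure~\ref{fig:skeinrelations}: the two ``parallel'' smoothings produce $\gamma_{n+1}$ and $\gamma_{n-1}$, while the two ``mixed'' smoothings produce curves that are trivial or boundary-parallel and hence, by the relations of Figures~\ref{fig:skeinrelations} and~\ref{fig:puncturerelation}, reduce to elements of $\mathbb{C}[A^{\pm1},\lambda_1^{\pm1},\dots,\lambda_4^{\pm1}]$. I would thus establish a relation
\[
\alpha\gamma_n = A^2\gamma_{n+1}+A^{-2}\gamma_{n-1}+r_n, \qquad r_n\in\mathbb{C}[A^{\pm1},\lambda_1^{\pm1},\dots,\lambda_4^{\pm1}],
\]
whose precise constant term $r_n$ is irrelevant for what follows.

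Granting this, the inductive step runs as follows. Fix $n\geq 1$ and assume the formula for $\gamma_{n-1}$ and $\gamma_n$. Applying the embedding of Proposition~\ref{prop:polyrepS04} (under which $A\mapsto q^{-1/2}$, hence $A^{-2}\mapsto q$ and $A^{-4}\mapsto q^2$, and $\alpha$ acts by multiplication by $X+X^{-1}$), the recursion becomes $\widehat{\gamma_{n+1}}=q\,\widehat{\alpha}\,\widehat{\gamma_n}-q^2\,\widehat{\gamma_{n-1}}+c_n$ with $c_n=-q\,\widehat{r_n}\in\mathbb{C}_{q,\mathbf{t}}$ acting by multiplication. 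Left-multiplying $\widehat{\gamma_n}=q^nX^nU(X)(\tau-1)+q^nX^{-n}U(X^{-1})(\tau^{-1}-1)+f_n(X)$ by $X+X^{-1}$ sends the $\tau$-coefficient $q^nX^nU(X)$ to $q^n(X^{n+1}+X^{n-1})U(X)$ and the $\tau^{-1}$-coefficient $q^nX^{-n}U(X^{-1})$ to $q^n(X^{-(n-1)}+X^{-(n+1)})U(X^{-1})$. Since $q^nX^{n-1}U(X)=q\cdot\bigl(q^{n-1}X^{n-1}U(X)\bigr)$ is $q$ times the $\tau$-coefficient of $\widehat{\gamma_{n-1}}$, the $X^{n-1}$ contribution in $q\,\widehat{\alpha}\,\widehat{\gamma_n}$ is exactly cancelled by the corresponding term of $q^2\,\widehat{\gamma_{n-1}}$, leaving the $\tau$-coefficient equal to $q^{n+1}X^{n+1}U(X)$; the same cancellation leaves the $\tau^{-1}$-coefficient equal to $q^{n+1}X^{-(n+1)}U(X^{-1})$. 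Because $c_n$ affects only the constant term, and because $1,\tau,\tau^{-1}$ are linearly independent over $\mathbb{C}_{q,\mathbf{t}}(X)$ as operators on $\mathbb{C}_{q,\mathbf{t}}[X^{\pm1}]$, we obtain
\[
\widehat{\gamma_{n+1}}=q^{n+1}X^{n+1}U(X)(\tau-1)+q^{n+1}X^{-(n+1)}U(X^{-1})(\tau^{-1}-1)+f_{n+1}(X)
\]
for a uniquely determined $f_{n+1}(X)\in\mathbb{C}_{q,\mathbf{t}}(X)$. Finally $\widehat{\gamma_{n+1}}$, being the image of a skein element, preserves $\mathbb{C}_{q,\mathbf{t}}[X^{\pm1}]^{\mathbb{Z}_2}$, and since $(\tau-1)(1)=(\tau^{-1}-1)(1)=0$ we get $f_{n+1}(X)=\widehat{\gamma_{n+1}}(1)\in\mathbb{C}_{q,\mathbf{t}}[X^{\pm1}]^{\mathbb{Z}_2}$, completing the step for $n\geq 1$. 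For $n\leq 0$ one solves the recursion for $\gamma_{n-1}$ and repeats the identical computation with the roles of $n+1$ and $n-1$ interchanged, propagating the formula from $\gamma_0,\gamma_1$ to all $\gamma_n$.

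The one genuinely topological ingredient — and the step I expect to be the main obstacle — is verifying that the mixed smoothings of $\alpha\gamma_n$ yield only trivial or boundary-parallel curves, so that the correction term $r_n$ is a \emph{scalar} rather than something involving $\gamma_n$ or the other generators (note that the regrouping above fails unless $r_n$ is scalar, so this point is forced by consistency with the cases $n=0,1$, but it still has to be checked directly). This is exactly where the argument uses that the surface is $S_{0,4}$ and that $\alpha$ is the curve defining the twist. Once the recursion is in place, the rest is the bookkeeping with $\tau^{\pm1}$ and multiplication by $X+X^{-1}$ indicated above.
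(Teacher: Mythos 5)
Your proposal is correct and follows essentially the same route as the paper: base cases $n=0,1$ from Proposition~\ref{prop:polyrepS04}, the skein-theoretic recursion $\alpha\gamma_m=A^2\gamma_{m+1}+A^{-2}\gamma_{m-1}+C_m$ with scalar $C_m$, and then the induction via $\gamma_{m+1}=A^{-2}\alpha\gamma_m-A^{-4}\gamma_{m-1}-A^{-2}C_m$ with the same cancellation of the $X^{m-1}$ terms. The paper likewise leaves the verification of the recursion (your ``topological ingredient'') as a check of the skein relations, so your treatment matches it in both substance and level of detail.
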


\begin{proof}
For $n\in\{0,1\}$ the statement follows from Proposition~\ref{prop:polyrepS04} where we define $f_0(X)\coloneqq f_y(X)$ and $f_1(X)\coloneqq f_z(X)$. Let us fix an integer $m\geq1$ and assume inductively that the statement is true for all nonnegative integers $n\leq m$. One can check that the skein relations depicted in Figures~\ref{subfig:resolution} and~\ref{fig:puncturerelation} imply $\alpha\gamma_m=A^2\gamma_{m+1}+A^{-2}\gamma_{m-1}+C_m$ for some $C_m\in\mathbb{C}[A^{\pm1},\lambda_1^{\pm1},\dots,\lambda_4^{\pm1}]$. It follows from our assumption that 
\begin{align*}
\gamma_{m+1} &= A^{-2}\alpha\gamma_m-A^{-4}\gamma_{m-1}-A^{-2}C_m \\
&\mapsto q^{m+1}X^{m+1}U(X)(\tau-1)+q^{m+1}X^{-m-1}U(X^{-1})(\tau^{-1}-1)+f_{m+1}(X)
\end{align*}
where $f_{m+1}(X)\coloneqq q(X+X^{-1})f_m(X)-q^2f_{m-1}(X)-qC_m'$ with $C_m'$ being the image of~$C_m$ under the homomorphism. By induction, the desired statement is true for all integers $n\geq0$. A similar inductive argument proves the statement for all integers $n\leq0$.
\end{proof}

\subsection{Spherical DAHA of type $A_1$}

We will also write $\mathbb{C}_{q,t}=\mathbb{C}(q^{\frac{1}{2}},t^{\frac{1}{2}})$ and define the \emph{double affine Hecke algebra (DAHA)} of type~$A_1$ to be the $\mathbb{C}_{q,t}$-algebra $\mathcal{H}_{q,t}(A_1)$ generated by variables $T$, $X^{\pm1}$, and $Y^{\pm1}$, subject to the relations 
\[
TXT=X^{-1}, \quad TY^{-1}T=Y, \quad Y^{-1}X^{-1}YXT^2=q^{-1}, \quad (T-t^{\frac{1}{2}})(T+t^{-\frac{1}{2}})=0.
\]
The last relation implies that the element $e=(t^{\frac{1}{2}}+t^{-\frac{1}{2}})^{-1}(T+t^{-\frac{1}{2}})$ is an idempotent in~$\mathcal{H}_{q,t}(A_1)$. The \emph{spherical~DAHA} is defined as the algebra $\mathcal{SH}_{q,t}(A_1)=e\mathcal{H}_{q,t}(A_1)e\subset\mathcal{H}_{q,t}(A_1)$. We consider the elements 
\[
x=(X+X^{-1})e, \quad y=(Y+Y^{-1})e, \quad z=(q^{\frac{1}{2}}YX+q^{-\frac{1}{2}}X^{-1}Y^{-1})e
\]
of $\mathcal{SH}_{q,t}(A_1)$.

\begin{proposition}[\cite{S19}, Theorem~2.27]
\label{prop:DAHAtypeA}
The spherical DAHA $\mathcal{SH}_{q,t}(A_1)$ is generated by the elements $x$,~$y$, and~$z$ subject to the relations 
\begin{align*}
q^{\frac{1}{2}}xy-q^{-\frac{1}{2}}yx &= (q-q^{-1})z, \\
q^{\frac{1}{2}}yz-q^{-\frac{1}{2}}zy &= (q-q^{-1})x, \\
q^{\frac{1}{2}}zx-q^{-\frac{1}{2}}xz &= (q-q^{-1})y,
\end{align*}
and the relation 
\[
qx^2+q^{-1}y^2+qz^2-q^{\frac{1}{2}}xyz=tq^{-1}-qt^{-1}+q+q^{-1}.
\]
\end{proposition}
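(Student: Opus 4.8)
The plan is to proceed in three stages: verify that the four displayed relations hold among $x,y,z$ in $\mathcal{SH}_{q,t}(A_1)$; show that $x,y,z$ generate $\mathcal{SH}_{q,t}(A_1)$; and show that no further relations are needed. Throughout, the basic computational device is the identity $Te=eT=t^{\frac12}e$, which follows immediately from the quadratic relation $(T-t^{\frac12})(T+t^{-\frac12})=0$ (equivalently $T^2=(t^{\frac12}-t^{-\frac12})T+1$) and from the fact that $e$ is a polynomial in $T$; it lets one absorb the middle idempotent in a product of two spherical elements.

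For the first stage I would compute the products $xy$, $yx,\dots,xyz$ directly inside $e\mathcal{H}_{q,t}(A_1)e$, moving powers of $T$ to the ends via $Te=t^{\frac12}e$ and reducing powers of $X^{\pm1},Y^{\pm1}$ using $TXT=X^{-1}$, $TY^{-1}T=Y$ and $Y^{-1}X^{-1}YXT^2=q^{-1}$. The three $q$-commutator relations come out after a short manipulation; the cubic relation is the one genuine computation here, and it is exactly the quantum analogue of the Fricke cubic cutting out the relative $\mathrm{SL}_2$-character variety of $S_{1,1}$, so some care with the scalars produced along the way is needed. Alternatively---and in parallel with the $(C_1^\vee,C_1)$ case treated through Proposition~\ref{prop:polyrepCC}---one can verify all four identities inside the faithful polynomial representation of $\mathcal{H}_{q,t}(A_1)$ on $\mathbb{C}_{q,t}[X^{\pm1}]$ (with $X$ acting by multiplication, $T$ by the Demazure--Lusztig operator, and $Y$ by the resulting $q$-difference operator), restricted to the subspace $\mathbb{C}_{q,t}[X^{\pm1}]^{\mathbb{Z}_2}$ on which $\mathcal{SH}_{q,t}(A_1)$ acts faithfully; there $\widehat{x}=X+X^{-1}$ is multiplication and $\widehat{y},\widehat{z}$ are explicit combinations of the shift operators $\tau^{\pm1}\colon f(X)\mapsto f(q^{\pm1}X)$ with multiplication, so the relations reduce to finite rational-function identities.

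For the second stage I would use the PBW theorem for $\mathcal{H}_{q,t}(A_1)$: the monomials $X^mY^nT^{\epsilon}$ with $m,n\in\mathbb{Z}$, $\epsilon\in\{0,1\}$ form a $\mathbb{C}_{q,t}$-basis, so $e\mathcal{H}_{q,t}(A_1)e$ is spanned by the $\mathbb{Z}_2$-symmetrizations of the elements $e\,X^mY^n\,e$. One then shows by induction on $\max(|m|,|n|)$ that every such element lies in the subalgebra generated by $x,y,z$: the element $z$ contributes the lattice direction $(1,1)$, while the $q$-commutators of $x$ (direction $(\pm1,0)$) and of $y$ (direction $(0,\pm1)$) with elements already produced yield the remaining lattice points modulo strictly smaller ones. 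This gives surjectivity of the obvious homomorphism $\phi\colon B\twoheadrightarrow\mathcal{SH}_{q,t}(A_1)$, where $B$ is the $\mathbb{C}_{q,t}$-algebra presented by $x,y,z$ and the four relations. (In the difference-operator picture one argues instead that $\widehat{x}$ generates all symmetric multiplication operators and its $q$-commutators with $\widehat{y}$ produce the shifts, which together exhaust $e\mathcal{H}_{q,t}(A_1)e$.)

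For the third stage, filter $B$ by total degree in $x,y,z$. The top-degree parts of the three $q$-commutator relations give $yx=q\,xy$, $zy=q\,yz$, $xz=q\,zx$ in $\mathrm{gr}\,B$, so $\mathrm{gr}\,B$ is a quotient of the skew-polynomial algebra with these $q$-commuting generators; the top-degree part of the cubic relation gives $xyz=0$ in $\mathrm{gr}\,B$, and since $xyz$ is central in that skew-polynomial algebra it follows that $\mathrm{gr}\,B$---hence $B$ itself---is spanned by the monomials $x^ay^bz^c$ with $\min(a,b,c)=0$. It remains to check that $\phi$ sends these to a linearly independent family in $\mathcal{SH}_{q,t}(A_1)$; I would do this by attaching to $\phi(x^ay^bz^c)$ its leading term in the $\mathbb{Z}^2$-grading of $\mathcal{H}_{q,t}(A_1)$ by powers of $X$ and $Y$ and verifying, once the coincidences along the coordinate axes are taken into account, that distinct monomials give distinct leading terms. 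Then $\phi$ is an isomorphism. (A cleaner but less self-contained variant: as $q\to1$ the algebra $\mathcal{SH}_{q,t}(A_1)$ degenerates flatly to the coordinate ring of the Fricke cubic surface---the relative character variety of $S_{1,1}$---whose natural monomial basis is exactly the spanning set above, and flatness promotes the surjection $\phi$ to an isomorphism.) I expect this last step to be the main obstacle: the ``no collisions'' claim is essentially the PBW theorem for the spherical subalgebra $e\mathcal{H}_{q,t}(A_1)e$, and the bookkeeping is delicate because the monomial families $x^ay^b$, $y^bz^c$, $z^cx^a$ overlap on the pure powers, so one must track not merely the leading $\mathbb{Z}^2$-weight of $\phi(x^ay^bz^c)$ but its full leading term in the PBW basis (or, in the difference-operator picture, the leading $\tau$-order together with its rational-function coefficient). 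The cubic-relation calculation in the first stage is the other unavoidable, though purely mechanical, piece of work.
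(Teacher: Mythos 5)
The paper does not actually prove this proposition: it is quoted verbatim from the literature, with the bracketed citation to \cite{S19}, Theorem~2.27 serving as the entire justification, so there is no in-paper argument to compare yours against. Your outline reconstructs what is essentially the standard proof of this presentation (as carried out in \cite{S19} and, for the closely related universal Askey--Wilson algebra, by Terwilliger): verify the four relations using $Te=eT=t^{\frac12}e$ or the faithful polynomial representation; prove generation from the PBW basis $X^mY^nT^\epsilon$ of $\mathcal{H}_{q,t}(A_1)$; and establish injectivity of the map from the abstractly presented algebra by a filtration argument whose associated graded is the skew-polynomial ring modulo the central element $xyz$, followed by a leading-term linear-independence check. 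The strategy is sound, and your identification of the two genuinely laborious points is accurate: the cubic identity is a finite but nontrivial computation, and the ``no collisions'' step is exactly the PBW theorem for $e\mathcal{H}_{q,t}(A_1)e$, where (as you note) the overlaps of the monomial families $x^ay^b$, $y^bz^c$, $z^cx^a$ along the pure powers force you to track full leading terms rather than just $\mathbb{Z}^2$-weights. Two cautions on your alternatives: verifying the relations in the faithful representation is legitimate for stage one but of course contributes nothing to completeness of the presentation, and the flat-degeneration shortcut in stage three is close to circular, since the flatness of the family over $q$ is usually established by the very PBW/basis argument you are trying to avoid. As a blind reconstruction of the cited proof the proposal is correct in approach, with the deferred verifications being real but routine.
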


To define the polynomial representation of the spherical DAHA, we consider the $\mathbb{C}_{q,t}$-linear operators $\sigma$,~$\varpi\in\End\mathbb{C}_{q,t}[X^{\pm1}]$ given by the formulas $(\sigma f)(X)=f(X^{-1})$ and $(\varpi f)(X)=f(qX)$, and we define operators 
\[
\widehat{T}=t^{\frac{1}{2}}\sigma+\frac{t^{\frac{1}{2}}-t^{-\frac{1}{2}}}{X^2-1}, \quad \widehat{X}=X, \quad \widehat{Y}=\sigma\varpi T.
\]
By Theorem~2.5.6 of~\cite{C05}, the rules $T\mapsto\widehat{T}$, $X\mapsto\widehat{X}$, $Y\mapsto\widehat{Y}$ define a $\mathbb{C}_{q,t}$-linear embedding of the DAHA $\mathcal{H}_{q,t}(A_1)$ into $\End\mathbb{C}_{q,t}[X^{\pm1}]$. As before there is a $\mathbb{Z}_2$-action on the ring $\mathbb{C}_{q,t}[X^{\pm1}]$ where the nontrivial element acts by $X\mapsto X^{-1}$ so that the corresponding ring of invariants is $\mathbb{C}_{q,t}[X^{\pm1}]^{\mathbb{Z}_2}\cong\mathbb{C}_{q,t}[X+X^{-1}]$.

\begin{proposition}
\label{prop:polyrepA}
There is a $\mathbb{C}_{q,t}$-algebra embedding $\mathcal{SH}_{q,t}(A_1)\rightarrow\End\mathbb{C}_{q,t}[X^{\pm1}]^{\mathbb{Z}_2}$ that maps 
\begin{align*}
x &\mapsto X+X^{-1}, \\
y &\mapsto V(X)\varpi+V(X^{-1})\varpi^{-1}, \\
z &\mapsto q^{-\frac{1}{2}}X^{-1}V(X)\varpi+q^{-\frac{1}{2}}XV(X^{-1})\varpi^{-1},
\end{align*}
where 
\[
V(X)=\frac{t^{\frac{1}{2}}X-t^{-\frac{1}{2}}X^{-1}}{X-X^{-1}}.
\]
\end{proposition}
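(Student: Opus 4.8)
The plan is to imitate the proof of Proposition~\ref{prop:polyrepCC}, using the polynomial representation of $\mathcal{H}_{q,t}(A_1)$ in place of the one for type $(C_1^\vee,C_1)$. First I would recall from Theorem~2.5.6 of~\cite{C05} that $T\mapsto\widehat T$, $X\mapsto\widehat X$, $Y\mapsto\widehat Y$ defines a faithful $\mathbb{C}_{q,t}$-linear action of $\mathcal{H}_{q,t}(A_1)$ on $\mathbb{C}_{q,t}[X^{\pm1}]$. Since $\widehat e$ is the $\mathbb{Z}_2$-symmetrizer --- equivalently, $\widehat T$ acts as the scalar $t^{\frac{1}{2}}$ on $\mathbb{C}_{q,t}[X^{\pm1}]^{\mathbb{Z}_2}$ --- its image is exactly $\mathbb{C}_{q,t}[X^{\pm1}]^{\mathbb{Z}_2}$, and restricting the action of $e\mathcal{H}_{q,t}(A_1)e$ to this subspace produces an algebra homomorphism $\mathcal{SH}_{q,t}(A_1)\to\End\mathbb{C}_{q,t}[X^{\pm1}]^{\mathbb{Z}_2}$. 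I would check injectivity exactly as in the $(C_1^\vee,C_1)$ case: an element $\widehat e h\widehat e$ killing $\widehat e\cdot\mathbb{C}_{q,t}[X^{\pm1}]$ automatically kills $(1-\widehat e)\cdot\mathbb{C}_{q,t}[X^{\pm1}]$, hence acts by zero on all of $\mathbb{C}_{q,t}[X^{\pm1}]$ and therefore vanishes in $\mathcal{H}_{q,t}(A_1)$ by faithfulness. Denote by $\widehat x$, $\widehat y$, $\widehat z$ the images of $x$, $y$, $z$.

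Next I would identify the three operators. From $\widehat X=X$ and $x=(X+X^{-1})e$ it is immediate that $\widehat x$ is multiplication by $X+X^{-1}$. For $\widehat y=\widehat e(\widehat Y+\widehat Y^{-1})\widehat e$ I would compute directly on a symmetric Laurent polynomial $f$: using $\widehat Y=\sigma\varpi\widehat T$ one gets $\widehat Y f=t^{\frac{1}{2}}f(qX^{-1})=t^{\frac{1}{2}}f(q^{-1}X)$, and using $\widehat T^{-1}=\widehat T-(t^{\frac{1}{2}}-t^{-\frac{1}{2}})$ one obtains a similar expression for $\widehat Y^{-1}f$; applying the explicit formula for $\widehat e$ and simplifying yields the Macdonald $q$-difference operator of type $A_1$, namely $V(X)\varpi+V(X^{-1})\varpi^{-1}$ with $V(X)=\frac{t^{\frac{1}{2}}X-t^{-\frac{1}{2}}X^{-1}}{X-X^{-1}}$. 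The apparent poles of this operator at $X=\pm1$ cancel on $\mathbb{C}_{q,t}[X^{\pm1}]^{\mathbb{Z}_2}$ because $V(X)$ and $V(X^{-1})$ have opposite residues there while $f(q^{\pm1})$ coincide for symmetric $f$; the identity $V(X)+V(X^{-1})=t^{\frac{1}{2}}+t^{-\frac{1}{2}}$ puts the operator in its familiar form. Alternatively one may just quote this description of the polynomial representation from~\cite{C05} or~\cite{S19}, or pin it down by matching eigenvalues on the basis of symmetric Macdonald polynomials.

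To get $\widehat z$ I would use the first relation of Proposition~\ref{prop:DAHAtypeA}, which gives
\[
\widehat z=(q-q^{-1})^{-1}\left(q^{\frac{1}{2}}\widehat x\widehat y-q^{-\frac{1}{2}}\widehat y\widehat x\right).
\]
Writing $\widehat x\widehat y=(X+X^{-1})V(X)\varpi+(X+X^{-1})V(X^{-1})\varpi^{-1}$ and $\widehat y\widehat x=V(X)(qX+q^{-1}X^{-1})\varpi+V(X^{-1})(q^{-1}X+qX^{-1})\varpi^{-1}$, a one-line computation shows that the coefficients of $\varpi$ and $\varpi^{-1}$ in $q^{\frac{1}{2}}\widehat x\widehat y-q^{-\frac{1}{2}}\widehat y\widehat x$ are $(q-q^{-1})q^{-\frac{1}{2}}X^{-1}V(X)$ and $(q-q^{-1})q^{-\frac{1}{2}}XV(X^{-1})$, so that
\[
\widehat z=q^{-\frac{1}{2}}X^{-1}V(X)\varpi+q^{-\frac{1}{2}}XV(X^{-1})\varpi^{-1}.
\]
Note there is no constant term here, in contrast with the $(C_1^\vee,C_1)$ case. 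Together with the injectivity established in the first step, this proves the proposition.

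The hard part is the identification of $\widehat y$: one must be careful about the order in which $\widehat T$, $\sigma$, $\varpi$ and the idempotent $\widehat e$ are applied, about the fact that $\widehat Y^{\pm1}f$ leave the symmetric subspace so that $\widehat e$ must be reapplied nontrivially, and about verifying that the resulting difference operator really does preserve $\mathbb{C}_{q,t}[X^{\pm1}]^{\mathbb{Z}_2}$. Once $\widehat x$ and $\widehat y$ are in hand, everything else is either immediate or a short manipulation of the shift operators $\varpi^{\pm1}$.
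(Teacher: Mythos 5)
Your proposal is correct and follows essentially the same route as the paper: restrict the polynomial representation to the symmetric subspace, read off $\widehat x=X+X^{-1}$, obtain the formula for $\widehat y$ (the paper simply cites equation~(2.19) of~\cite{S19}, which you also offer as an alternative to your direct computation), and derive $\widehat z$ from the first relation of Proposition~\ref{prop:DAHAtypeA}; your coefficient computation for $\widehat z$ checks out. The only difference is that you spell out the injectivity argument and the derivation of the Macdonald operator in more detail than the paper does.
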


\begin{proof}
By restricting the polynomial representation defined above, one obtains an injective $\mathbb{C}_{q,t}$-algebra homomorphism $\mathcal{SH}_{q,t}(A_1)\rightarrow\End\mathbb{C}_{q,t}[X^{\pm1}]^{\mathbb{Z}_2}$. It follows immediately from the definition of this polynomial representation that~$x$ corresponds to the operator~$X+X^{-1}$. The formula for the operator corresponding to~$y$ is given, for example, in equation~(2.19) of~\cite{S19}. Finally, the~formula for the operator corresponding to~$z$ follows by a short calculation using the first relation in~Proposition~\ref{prop:DAHAtypeA}.
\end{proof}

\subsection{A representation of $\Sk_{A,\lambda}(S_{1,1})$}

Finally, we will use the spherical DAHA of type~$A_1$ to define a representation of the relative skein algebra of a one-holed torus.

\begin{proposition}
\label{prop:skeinS11DAHAAiso}
There is a $\mathbb{C}$-algebra embedding $\Sk_{A,\rho}(S_{1,1})\rightarrow\mathcal{SH}_{q,t}(A_1)$ that maps $\alpha\mapsto x$, $\beta\mapsto y$, $\gamma\mapsto z$, $\rho\mapsto q^{-\frac{1}{2}}t^{\frac{1}{2}}$, and $A\mapsto q^{-\frac{1}{2}}$. 
\end{proposition}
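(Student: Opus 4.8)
The plan is to argue exactly as in the proof of Proposition~\ref{prop:skein04DAHACCiso}, by comparing presentations, but now using Propositions~\ref{prop:skeinS11} and~\ref{prop:DAHAtypeA}. First I would make the presentation of $\Sk_{A,\rho}(S_{1,1})$ fully explicit. By Proposition~\ref{prop:skeinS11} the algebra $\Sk_A(S_{1,1})$ is generated over $\mathbb{C}[A^{\pm1}]$ by $\alpha$, $\beta$, $\gamma$ subject to the three commutation relations, and the element $\delta$ is expressed through $\alpha$, $\beta$, $\gamma$ by the remaining relation listed there. Passing to the relative algebra imposes the extra relation $\delta=-(\lambda+\lambda^{-1})$ coming from Figure~\ref{fig:puncturerelation}, and extending scalars along $\lambda\mapsto\rho^2$ then presents $\Sk_{A,\rho}(S_{1,1})$ over $\mathbb{C}[A^{\pm1},\rho^{\pm1}]$ by the generators $\alpha$, $\beta$, $\gamma$, the three commutation relations of Proposition~\ref{prop:skeinS11}, and the single relation
\[
A^{-2}\alpha^2+A^2\beta^2+A^{-2}\gamma^2-A^{-1}\alpha\beta\gamma=A^2+A^{-2}+\rho^2+\rho^{-2}.
\]

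Next I would define the assignment $\alpha\mapsto x$, $\beta\mapsto y$, $\gamma\mapsto z$, $A\mapsto q^{-1/2}$, $\rho\mapsto q^{-1/2}t^{1/2}$ and check that it carries each defining relation of $\Sk_{A,\rho}(S_{1,1})$ to a consequence of the defining relations of $\mathcal{SH}_{q,t}(A_1)$ in Proposition~\ref{prop:DAHAtypeA}. Under $A\mapsto q^{-1/2}$ one has $A^{-1}\mapsto q^{1/2}$ and $A^{\pm2}\mapsto q^{\mp1}$, so the three commutation relations of Proposition~\ref{prop:skeinS11} go verbatim to the first three relations of Proposition~\ref{prop:DAHAtypeA}, and the left-hand side of the last displayed relation goes to $qx^2+q^{-1}y^2+qz^2-q^{1/2}xyz$, matching the left-hand side of the last relation of Proposition~\ref{prop:DAHAtypeA}. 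The one computational point is then to verify that the constant terms agree, i.e.\ that $A^2+A^{-2}+\rho^2+\rho^{-2}$ is carried to $tq^{-1}-qt^{-1}+q+q^{-1}$; as in the proof of Proposition~\ref{prop:polyrepS04}, this may require precomposing with a rescaling of the scalar parameters, and the substitution recorded in the statement is the one for which the identity holds on the nose. Since the presentations then match term by term, the assignment extends to a well-defined $\mathbb{C}$-algebra homomorphism $\Sk_{A,\rho}(S_{1,1})\to\mathcal{SH}_{q,t}(A_1)$.

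Finally I would prove injectivity. The ring map $\mathbb{C}[A^{\pm1},\rho^{\pm1}]\to\mathbb{C}_{q,t}$ given by $A\mapsto q^{-1/2}$, $\rho\mapsto q^{-1/2}t^{1/2}$ identifies $\mathbb{C}[A^{\pm1},\rho^{\pm1}]$ with the Laurent polynomial subring $\mathbb{C}[q^{\pm1/2},t^{\pm1/2}]$, of which $\mathbb{C}_{q,t}$ is the fraction field. Extending scalars of the presentation above along this inclusion and applying $\alpha\mapsto x$, $\beta\mapsto y$, $\gamma\mapsto z$ yields exactly the presentation of $\mathcal{SH}_{q,t}(A_1)$ from Proposition~\ref{prop:DAHAtypeA}, so the homomorphism above factors as
\[
\Sk_{A,\rho}(S_{1,1})\longrightarrow\Sk_{A,\rho}(S_{1,1})\otimes_{\mathbb{C}[A^{\pm1},\rho^{\pm1}]}\mathbb{C}_{q,t}\xrightarrow{\ \sim\ }\mathcal{SH}_{q,t}(A_1).
\]
It therefore suffices to know that $\Sk_{A,\rho}(S_{1,1})$ is torsion-free over $\mathbb{C}[A^{\pm1},\rho^{\pm1}]$, hence embeds into its localization. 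This follows from the fact that $\Sk_{A,\lambda}(S_{1,1})$ is a free $\mathbb{C}[A^{\pm1},\lambda^{\pm1}]$-module, with a basis of monomials in $\alpha$, $\beta$, $\gamma$ read off from the presentation of Proposition~\ref{prop:skeinS11} as in~\cite{BP00}, together with the fact that freeness is preserved by the finite free scalar extension $\lambda\mapsto\rho^2$.

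The only real obstacle I anticipate is the bookkeeping in the constant-term check and the identification of the precise scalar substitution; once the two presentations are written side by side, everything else is formal.
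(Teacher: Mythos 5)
Your strategy is exactly the paper's: the proof given there is a one-line appeal to the presentations in Propositions~\ref{prop:skeinS11} and~\ref{prop:DAHAtypeA}, and your expansion of it (matching relations term by term, then deducing injectivity from freeness of the relative skein module over $\mathbb{C}[A^{\pm1},\rho^{\pm1}]$ and the identification of the scalar extension with $\mathcal{SH}_{q,t}(A_1)$) is the right way to fill it in; the injectivity argument in particular is sound and is a point the paper leaves implicit.

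The one place you should be careful is precisely the step you deferred. Under $A\mapsto q^{-1/2}$, $\rho\mapsto q^{-1/2}t^{1/2}$ the right-hand side $A^2+A^{-2}+\rho^2+\rho^{-2}$ goes to $q+q^{-1}+q^{-1}t+qt^{-1}$, whereas Proposition~\ref{prop:DAHAtypeA} as printed has $q+q^{-1}+tq^{-1}-qt^{-1}$; these differ by $2qt^{-1}$, and no rescaling $\rho\mapsto c\rho$ can repair this, since that replaces $\rho^2+\rho^{-2}$ by $c^2\rho^2+c^{-2}\rho^{-2}$ and you would need $c^2=1$ and $c^{-2}=-1$ simultaneously. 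So your claim that ``the substitution recorded in the statement is the one for which the identity holds on the nose'' is false against the literal text. The resolution is that the sign in the quoted DAHA relation is a typo: applying the operators of Proposition~\ref{prop:polyrepA} to the constant function $1$ (using $y(1)=t^{1/2}+t^{-1/2}$, $z(1)=q^{-1/2}t^{-1/2}(X+X^{-1})$, and $y(X+X^{-1})=(qt^{1/2}+q^{-1}t^{-1/2})(X+X^{-1})$) gives $qx^2+q^{-1}y^2+qz^2-q^{1/2}xyz=q+q^{-1}+tq^{-1}+qt^{-1}$, which matches your computation and is consistent with $\lambda\mapsto q^{-1}t$ in Proposition~\ref{prop:polyrepS11}. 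With that correction your argument goes through unchanged; without doing this check, your proof as written would have stalled at exactly the point you labelled ``the one computational point.''
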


\begin{proof}
This follows from the presentations of these algebras given in Propositions~\ref{prop:skeinS11} and~\ref{prop:DAHAtypeA}.
\end{proof}

\begin{proposition}
\label{prop:polyrepS11}
There is a $\mathbb{C}$-algebra embedding $\Sk_{A,\lambda}(S_{1,1})\rightarrow\End\mathbb{C}_{q,t}[X^{\pm1}]^{\mathbb{Z}_2}$ that maps $A\mapsto q^{-\frac{1}{2}}$, $\lambda\mapsto q^{-1}t$, and 
\begin{equation}
\label{eqn:polyrepS11}
\begin{split}
\alpha &\mapsto X+X^{-1}, \\
\beta &\mapsto V(X)\varpi+V(X^{-1})\varpi^{-1}, \\
\gamma &\mapsto q^{-\frac{1}{2}}X^{-1}V(X)\varpi+q^{-\frac{1}{2}}XV(X^{-1})\varpi^{-1},
\end{split}
\end{equation}
where $V$ is the operator defined in Proposition~\ref{prop:polyrepA}.
\end{proposition}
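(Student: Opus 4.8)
The plan is to obtain the desired embedding by composition, exactly as in the proof of Proposition~\ref{prop:polyrepS04}, together with one additional step to pass from the auxiliary variable $\rho$ to the variable $\lambda$. Proposition~\ref{prop:skeinS11DAHAAiso} supplies an embedding $\Sk_{A,\rho}(S_{1,1})\hookrightarrow\mathcal{SH}_{q,t}(A_1)$ sending $\alpha\mapsto x$, $\beta\mapsto y$, $\gamma\mapsto z$, $A\mapsto q^{-\frac{1}{2}}$, and $\rho\mapsto q^{-\frac{1}{2}}t^{\frac{1}{2}}$, while Proposition~\ref{prop:polyrepA} supplies an embedding $\mathcal{SH}_{q,t}(A_1)\hookrightarrow\End\mathbb{C}_{q,t}[X^{\pm1}]^{\mathbb{Z}_2}$ sending $x$, $y$, $z$ to the explicit operators displayed there. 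Composing the two produces an injective $\mathbb{C}$-algebra homomorphism $\Sk_{A,\rho}(S_{1,1})\hookrightarrow\End\mathbb{C}_{q,t}[X^{\pm1}]^{\mathbb{Z}_2}$ taking $\alpha$, $\beta$, $\gamma$ to those operators and the scalars by $A\mapsto q^{-\frac{1}{2}}$, $\rho\mapsto q^{-\frac{1}{2}}t^{\frac{1}{2}}$.

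Next I would restrict this composite to the subalgebra $\Sk_{A,\lambda}(S_{1,1})$. By construction (Section~\ref{sec:TheKauffmanBracketSkeinAlgebra}) one has $\Sk_{A,\rho}(S_{1,1})=\mathbb{C}[A^{\pm1},\rho^{\pm1}]\otimes_{\mathbb{C}[A^{\pm1},\lambda^{\pm1}]}\Sk_{A,\lambda}(S_{1,1})$, the ring homomorphism being $\lambda\mapsto\rho^2$. The ring $\mathbb{C}[A^{\pm1},\rho^{\pm1}]$ is a free module of rank two, with basis $\{1,\rho\}$, over $\mathbb{C}[A^{\pm1},\lambda^{\pm1}]$, hence faithfully flat over it, so for every $\mathbb{C}[A^{\pm1},\lambda^{\pm1}]$-module $M$ the canonical map $M\to M\otimes_{\mathbb{C}[A^{\pm1},\lambda^{\pm1}]}\mathbb{C}[A^{\pm1},\rho^{\pm1}]$ is injective. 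In particular $w\mapsto 1\otimes w$ realizes $\Sk_{A,\lambda}(S_{1,1})$ as a $\mathbb{C}$-subalgebra of $\Sk_{A,\rho}(S_{1,1})$, and restricting the injection of the previous paragraph to it yields an injective $\mathbb{C}$-algebra homomorphism $\Sk_{A,\lambda}(S_{1,1})\hookrightarrow\End\mathbb{C}_{q,t}[X^{\pm1}]^{\mathbb{Z}_2}$.

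Finally I would read off the images on this subalgebra: the scalars obey $A\mapsto q^{-\frac{1}{2}}$ and $\lambda=\rho^2\mapsto(q^{-\frac{1}{2}}t^{\frac{1}{2}})^2=q^{-1}t$, as claimed, while $\alpha$, $\beta$, $\gamma$ are sent to the operators $x$, $y$, $z$ of Proposition~\ref{prop:polyrepA}, which are precisely the expressions in~\eqref{eqn:polyrepS11}, with $V$ the operator defined in Proposition~\ref{prop:polyrepA}. This gives the proposition.

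Almost all of this is bookkeeping, since the two substantial embeddings are already in hand; the one step that is not entirely automatic is the injectivity of $\Sk_{A,\lambda}(S_{1,1})\to\Sk_{A,\rho}(S_{1,1})$, which is exactly what the faithful flatness observation secures. One could instead verify by a direct (if lengthier) computation that the operators on the right-hand side of~\eqref{eqn:polyrepS11} satisfy the defining relations of $\Sk_{A,\lambda}(S_{1,1})$ coming from Proposition~\ref{prop:skeinS11} together with the peripheral relation identifying $\delta$ with $-(\lambda+\lambda^{-1})$; but injectivity of the resulting homomorphism would still have to be imported from the DAHA polynomial representation, so the composition route is the most economical.
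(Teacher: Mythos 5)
Your proposal is correct and follows the same route as the paper: compose the embeddings of Propositions~\ref{prop:skeinS11DAHAAiso} and~\ref{prop:polyrepA} to embed $\Sk_{A,\rho}(S_{1,1})$ into $\End\mathbb{C}_{q,t}[X^{\pm1}]^{\mathbb{Z}_2}$, then restrict to $\Sk_{A,\lambda}(S_{1,1})$ and read off $\lambda=\rho^2\mapsto q^{-1}t$. The paper compresses the restriction step to the phrase ``restricting scalars,'' whereas you justify the injectivity of $\Sk_{A,\lambda}(S_{1,1})\to\Sk_{A,\rho}(S_{1,1})$ via the freeness of $\mathbb{C}[A^{\pm1},\rho^{\pm1}]$ over $\mathbb{C}[A^{\pm1},\lambda^{\pm1}]$, which is a correct and welcome elaboration of the same argument.
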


\begin{proof}
By composing the maps in Propositions~\ref{prop:skeinS11DAHAAiso} and~\ref{prop:polyrepA}, we obtain an injective $\mathbb{C}$-algebra homomorphism $\Sk_{A,\rho}(S_{1,1})\rightarrow\End\mathbb{C}_{q,t}[X^{\pm1}]^{\mathbb{Z}_2}$. Restricting scalars, we obtain an injective $\mathbb{C}$-algebra homomorphism $\Sk_{A,\lambda}(S_{1,1})\rightarrow\End\mathbb{C}_{q,t}[X^{\pm1}]^{\mathbb{Z}_2}$ with the required values on generators.
\end{proof}

Using the formulas~\eqref{eqn:polyrepS11}, we can compute the operators corresponding to other curves on~$S_{1,1}$. Namely, we consider the curves $\gamma_n$~($n\in\mathbb{Z}$) illustrated in Figure~\ref{fig:S04curves}. Note that $\gamma_0$ and~$\gamma_1$ are the curves denoted~$\beta$ and~$\gamma$ above.

\begin{figure}[ht]
\begin{tikzpicture}[scale=0.75]
\clip(-2.95,-2) rectangle (1.9,1.5);
\draw[gray, thin] (-2,0) ellipse (0.1 and 0.65);
\draw[gray, thin] plot [smooth, tension=1] coordinates { (-2,0.65) (-1.25,0.7) (0.9,1) (1.7,0)};
\draw[gray, thin] plot [smooth, tension=1] coordinates { (-2,-0.65) (-1.25,-0.7) (0.9,-1) (1.7,0)};
\draw[gray, thin] plot [smooth, tension=1] coordinates { (-0.75,0.1) (-0.3,-0.1) (0.3,-0.1) (0.75,0.1)};
\draw[gray, thin] plot [smooth, tension=1] coordinates { (-0.625,0) (-0.3,0.15) (0.3,0.15) (0.625,0)};
\draw[black, thick] plot [smooth, tension=0.6] coordinates { (-0.625,0) (-0.6,0.3) (0,0.45) (0.9,0.25) (1.05,-0.15) (0.65,-0.4) (-0.1,-0.43) (-0.8,-0.2) (-1,0.15) (-0.8,0.5) (-0.1,0.7) (0.5,0.7) (1,0.55) (1.35,0.15) (1.3,-0.35) (1,-0.77) (0.625,-1.05)};
\draw[black, thick, dotted] plot [smooth, tension=0.75] coordinates { (-0.625,0) (-0.4,-0.5) (0.2,-0.9) (0.625,-1.05)};
\node at (0,-1.5) {$\gamma_{-2}$};
\node at (-2.6,0) {$\dots$};
\end{tikzpicture}
\begin{tikzpicture}[scale=0.75]
\clip(-2.25,-2) rectangle (1.9,1.5);
\draw[gray, thin] (-2,0) ellipse (0.1 and 0.65);
\draw[gray, thin] plot [smooth, tension=1] coordinates { (-2,0.65) (-1.25,0.7) (0.9,1) (1.7,0)};
\draw[gray, thin] plot [smooth, tension=1] coordinates { (-2,-0.65) (-1.25,-0.7) (0.9,-1) (1.7,0)};
\draw[gray, thin] plot [smooth, tension=1] coordinates { (-0.75,0.1) (-0.3,-0.1) (0.3,-0.1) (0.75,0.1)};
\draw[gray, thin] plot [smooth, tension=1] coordinates { (-0.625,0) (-0.3,0.15) (0.3,0.15) (0.625,0)};
\draw[black, thick] plot [smooth, tension=0.75] coordinates { (-0.625,0) (0,0.55) (0.95,0.35) (1.15,-0.45) (0.625,-1.05)};
\draw[black, thick, dotted] plot [smooth, tension=0.75] coordinates { (-0.625,0) (-0.4,-0.5) (0.2,-0.9) (0.625,-1.05)};
\node at (0,-1.5) {$\gamma_{-1}$};
\end{tikzpicture}
\begin{tikzpicture}[scale=0.75]
\clip(-2.25,-2) rectangle (1.9,1.5);
\draw[gray, thin] (-2,0) ellipse (0.1 and 0.65);
\draw[gray, thin] plot [smooth, tension=1] coordinates { (-2,0.65) (-1.25,0.7) (0.9,1) (1.7,0)};
\draw[gray, thin] plot [smooth, tension=1] coordinates { (-2,-0.65) (-1.25,-0.7) (0.9,-1) (1.7,0)};
\draw[gray, thin] plot [smooth, tension=1] coordinates { (-0.75,0.1) (-0.3,-0.1) (0.3,-0.1) (0.75,0.1)};
\draw[gray, thin] plot [smooth, tension=1] coordinates { (-0.625,0) (-0.3,0.15) (0.3,0.15) (0.625,0)};
\draw[black, thick] plot [smooth, tension=1] coordinates { (0.625,0) (0.85,0.3) (1.5,0.3) (1.7,0)};
\draw[black, thick, dotted] plot [smooth, tension=1] coordinates { (0.625,0) (0.85,-0.3) (1.5,-0.3) (1.7,0)};
\node at (0,-1.5) {$\gamma_0$};
\end{tikzpicture}
\begin{tikzpicture}[scale=0.75]
\clip(-2.25,-2) rectangle (1.9,1.5);
\draw[gray, thin] (-2,0) ellipse (0.1 and 0.65);
\draw[gray, thin] plot [smooth, tension=1] coordinates { (-2,0.65) (-1.25,0.7) (0.9,1) (1.7,0)};
\draw[gray, thin] plot [smooth, tension=1] coordinates { (-2,-0.65) (-1.25,-0.7) (0.9,-1) (1.7,0)};
\draw[gray, thin] plot [smooth, tension=1] coordinates { (-0.75,0.1) (-0.3,-0.1) (0.3,-0.1) (0.75,0.1)};
\draw[gray, thin] plot [smooth, tension=1] coordinates { (-0.625,0) (-0.3,0.15) (0.3,0.15) (0.625,0)};
\draw[black, thick] plot [smooth, tension=0.75] coordinates { (0.625,0) (0,0.55) (-0.95,0.35) (-1.15,-0.45) (-0.625,-0.8)};
\draw[black, thick, dotted] plot [smooth, tension=0.75] coordinates { (0.625,0) (0.4,-0.5) (-0.25,-0.75) (-0.625,-0.8)};
\node at (0,-1.5) {$\gamma_1$};
\end{tikzpicture}
\begin{tikzpicture}[scale=0.75]
\clip(-2.25,-2) rectangle (2.55,1.5);
\draw[gray, thin] (-2,0) ellipse (0.1 and 0.65);
\draw[gray, thin] plot [smooth, tension=1] coordinates { (-2,0.65) (-1.25,0.7) (0.9,1) (1.7,0)};
\draw[gray, thin] plot [smooth, tension=1] coordinates { (-2,-0.65) (-1.25,-0.7) (0.9,-1) (1.7,0)};
\draw[gray, thin] plot [smooth, tension=1] coordinates { (-0.75,0.1) (-0.3,-0.1) (0.3,-0.1) (0.75,0.1)};
\draw[gray, thin] plot [smooth, tension=1] coordinates { (-0.625,0) (-0.3,0.15) (0.3,0.15) (0.625,0)};
\draw[black, thick] plot [smooth, tension=0.6] coordinates { (0.625,0) (0.58,0.3) (0,0.44) (-0.9,0.21) (-1.05,-0.15) (-0.65,-0.4) (0.1,-0.45) (0.8,-0.2) (1,0.2) (0.78,0.58) (0.1,0.7) (-0.5,0.62) (-1,0.42) (-1.35,0.05) (-1.25,-0.45) (-0.625,-0.8)};
\draw[black, thick, dotted] plot [smooth, tension=0.75] coordinates { (0.625,0) (0.4,-0.5) (-0.25,-0.75) (-0.625,-0.8)};
\node at (0,-1.5) {$\gamma_2$};
\node at (2.3,0) {$\dots$};
\end{tikzpicture}
\caption{A family of curves on $S_{1,1}$.\label{fig:S11curves}}
\end{figure}
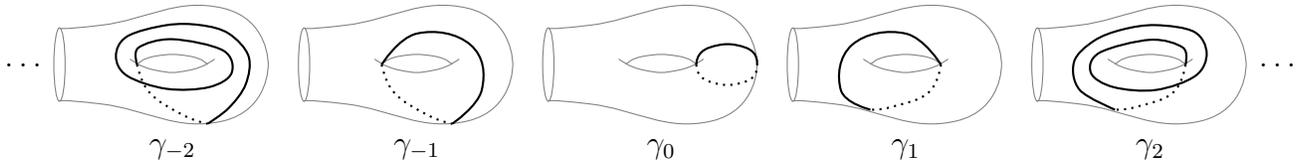

\begin{lemma}
\label{lem:imageS11curves}
The representation of Proposition~\ref{prop:polyrepS04} maps 
\[
\gamma_n\mapsto q^{-\frac{n}{2}}X^{-n}V(X)\varpi+q^{-\frac{n}{2}}X^nV(X^{-1})\varpi^{-1}
\]
where $V$ is the operator defined in Proposition~\ref{prop:polyrepA}.
\end{lemma}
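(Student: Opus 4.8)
The plan is to follow the strategy used in the proof of Lemma~\ref{lem:imageS04curves}: argue by induction on~$n$, with base cases $n\in\{0,1\}$ handled directly, and with the inductive step powered by a skein relation expressing $\alpha\gamma_m$ in terms of $\gamma_{m+1}$ and $\gamma_{m-1}$. For the base cases, $\gamma_0$ and $\gamma_1$ are by definition the curves $\beta$ and $\gamma$, so the asserted formulas are precisely the images recorded in Proposition~\ref{prop:polyrepS11} (using $q^{0}=1$ and reading off $f_0=f_1=0$ in this case, since the one-holed torus formulas have no symmetric correction term).

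The topological heart of the argument is the recursion. On $S_{1,1}$ the simple closed curves $\alpha$ and $\gamma_m$ have geometric intersection number one, so a diagram of the product $\alpha\gamma_m$ has a single crossing; resolving it by the relation of Figure~\ref{subfig:resolution} produces two connected simple closed curves, which I would identify with $\gamma_{m+1}$ and $\gamma_{m-1}$ (in slope coordinates these are $(1,0)\pm(m,1)$). This yields
\[
\alpha\gamma_m = A^{-1}\gamma_{m+1} + A\gamma_{m-1},
\]
with no additive constant, in contrast to the four-holed sphere where $\alpha$ and $\gamma_m$ meet in two points. I expect this to be the main obstacle: one must verify carefully that the two smoothings are isotopic to $\gamma_{m\pm1}$ and that the powers of $A$ come out as stated. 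A useful consistency check is to specialize to $m\in\{0,1\}$ and compare with the first and third relations of Proposition~\ref{prop:skeinS11}: combining $\alpha\gamma_m=A^{-1}\gamma_{m+1}+A\gamma_{m-1}$ with the analogous expansion of $\gamma_m\alpha$ (strands in the opposite order) reproduces $A^{-1}\alpha\beta-A\beta\alpha=(A^{-2}-A^2)\gamma$ and its cyclic companion.

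Finally, the inductive step is a routine Laurent-polynomial computation. Rewriting the recursion as $\gamma_{m+1}=A\alpha\gamma_m-A^{2}\gamma_{m-1}$ and applying the embedding of Proposition~\ref{prop:polyrepS11}, with $\alpha\mapsto X+X^{-1}$ and $A\mapsto q^{-1/2}$, one uses the identity $(X+X^{-1})X^{\mp m}V(X^{\pm1})=(X^{\mp(m-1)}+X^{\mp(m+1)})V(X^{\pm1})$ to see that both $A\alpha\gamma_m$ and $A^{2}\gamma_{m-1}$ have overall scalar $q^{-(m+1)/2}$, that their $X^{\mp(m-1)}$-contributions cancel, and that what remains is exactly $q^{-(m+1)/2}X^{-(m+1)}V(X)\varpi+q^{-(m+1)/2}X^{m+1}V(X^{-1})\varpi^{-1}$, i.e.\ the claimed image of $\gamma_{m+1}$. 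Running the same recursion in the form $\gamma_{m-1}=A^{-1}\alpha\gamma_m-A^{-2}\gamma_{m+1}$ gives the statement for all $n\le0$ by downward induction, completing the proof.
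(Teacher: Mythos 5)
Your proposal is correct and follows essentially the same route as the paper: base cases $n\in\{0,1\}$ from Proposition~\ref{prop:polyrepS11}, the single-crossing skein relation $\alpha\gamma_m=A\gamma_{m-1}+A^{-1}\gamma_{m+1}$ (with no constant term, as you correctly note in contrast to the $S_{0,4}$ case), and upward/downward induction via $\gamma_{m+1}=A\alpha\gamma_m-A^2\gamma_{m-1}$. The Laurent-polynomial verification you sketch is exactly the computation the paper performs.
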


\begin{proof}
For $n\in\{0,1\}$ the statement follows from Proposition~\ref{prop:polyrepS11}. Let us fix an integer $m\geq1$ and assume inductively that the statement is true for all nonnegative integers $n\leq m$. One can check that the skein relation depicted in Figures~\ref{subfig:resolution} implies $\alpha\gamma_m=A\gamma_{m-1}+A^{-1}\gamma_{m+1}$. It follows from our assumption that 
\begin{align*}
\gamma_{m+1} &= A\alpha\gamma_m-A^2\gamma_{m-1} \\
&\mapsto q^{-\frac{m+1}{2}}X^{-m-1}V(X)\varpi+q^{-\frac{m+1}{2}}X^{m+1}V(X^{-1})\varpi^{-1}.
\end{align*}
By induction, the desired statement is true for all integers $n\geq0$. A similar inductive argument proves the statement for all integers $n\leq0$.
\end{proof}

\section{Quantized $K$-theoretic Coulomb branches}
\label{sec:QuantizedKTheoreticCoulombBranches}

In this section, we review the construction of quantized $K$-theoretic Coulomb branches and establish some preliminary results about these objects.

\subsection{The variety of triples}

Throughout this section, we will write $\mathcal{O}=\mathbb{C}\llbracket z\rrbracket$ for the ring of formal power series and $\mathcal{K}=\mathbb{C}(\!( z)\!)$ for the field of formal Laurent series with complex coefficients. We~will suppose that we are given a complex, connected, reductive algebraic group~$G$, and we will write $G_{\mathcal{O}}$ for the $\mathcal{O}$-valued points and $G_{\mathcal{K}}$ for the $\mathcal{K}$-valued points of~$G$. The \emph{affine Grassmannian} of~$G$ is defined as 
\[
\Gr_G\coloneqq G_{\mathcal{K}}/G_{\mathcal{O}}.
\]
This can be understood as the set of complex points of an ind-scheme defined by an explicit functor of points, and in the following we will work with its reduced part.

In addition to the group~$G$, let us suppose that we are given a complex representation $N$ of~$G$, and let us write $N_\mathcal{O}=N\otimes_{\mathbb{C}}\mathcal{O}$ and $N_\mathcal{K}=N\otimes_{\mathbb{C}}\mathcal{K}$. Following Braverman, Finkelberg, and Nakajima~\cite{BFN18}, we define 
\[
\mathcal{T}_{G,N}\coloneqq G_{\mathcal{K}}\times_{G_\mathcal{O}}N_{\mathcal{O}}
\]
where the right hand side is the quotient of $G_{\mathcal{K}}\times N_{\mathcal{O}}$ by the equivalence relation given by $(g,n)\sim(gb^{-1},bn)$ for every $b\in G_{\mathcal{O}}$. We denote the equivalence class of a pair $(g,n)$ with respect to this relation by~$[g,n]$. Then there is a well defined map $\Pi:\mathcal{T}_{G,N}\rightarrow N_{\mathcal{K}}$ given by $[g,n]\mapsto gn$, and we define the \emph{variety of triples} $\mathcal{R}_{G,N}$ to be 
\[
\mathcal{R}_{G,N}\coloneqq\Pi^{-1}(N_{\mathcal{O}})=\{[g,n]\in\mathcal{T}_{G,N}:gn\in N_{\mathcal{O}}\}.
\]
Like the affine Grassmannian, the spaces $\mathcal{T}_{G,N}$ and $\mathcal{R}_{G,N}$ can be understood as sets of complex points of corresponding ind-schemes, and we take the reduced parts. These ind-schemes parametrize triples of geometric data on a formal disk; see~\cite{BFN18} for details. When there is no possibility of confusion, we will simply write $\mathcal{T}=\mathcal{T}_{G,N}$ and $\mathcal{R}=\mathcal{R}_{G,N}$.

In addition to the map~$\Pi$, there is a well defined projection $\pi:\mathcal{T}\rightarrow\Gr_G$ given by $[g,n]\mapsto gG_{\mathcal{O}}$. The maps $\Pi$ and~$\pi$ provide an injective map of sets 
\[
(\pi,\Pi):\mathcal{T}\hookrightarrow\Gr_G\times N_{\mathcal{K}}.
\]
From the definition of the variety of triples, one sees that 
\[
\mathcal{R}=\mathcal{T}\cap\left(\Gr_G\times N_{\mathcal{O}}\right).
\]
The group $G_{\mathcal{K}}$ acts on~$\mathcal{T}$ by left multiplication, and the subgroup $G_{\mathcal{O}}$ preserves~$\mathcal{R}$.

\subsection{Equivariant $K$-theory}

The $K$-theoretic Coulomb branch is defined in terms of the equivariant $K$-theory of the variety of triples $\mathcal{R}_{G,N}$. Since the space $\mathcal{R}_{G,N}$ is an ind-scheme and not a true variety, we must take special care in defining this $K$-theory. We use the fact that there is a natural $G_\mathcal{O}$-action on $\Gr_G$ and a decomposition 
\[
\Gr_G=\bigsqcup_{\lambda\in X_+}\Gr_G^\lambda
\]
into $G_{\mathcal{O}}$-orbits $\Gr_G^\lambda$ parametrized by dominant coweights $\lambda$ of~$G$. The closure $\overline{\Gr}_G^\lambda$ of such an orbit is a projective variety. Moreover, using the standard partial order on the set~$X_+$ of dominant coweights, one has $\overline{\Gr}_G^\lambda=\bigsqcup_{\mu\leq\lambda}\Gr_G^\mu$. We define $\mathcal{T}_{\leq\lambda}=\pi^{-1}\left(\overline{\Gr}_G^\lambda\right)$, where $\pi:\mathcal{T}\rightarrow\Gr_G$ is the projection, and define $\mathcal{R}_{\leq\lambda}=\mathcal{R}\cap\mathcal{T}_{\leq\lambda}$.

Next we consider further decompositions of the spaces $\mathcal{T}_{\leq\lambda}$ and $\mathcal{R}_{\leq\lambda}$. For any integer $d>0$, we~can form the quotient $\mathcal{T}^d=G_{\mathcal{K}}\times_{G_{\mathcal{O}}}(N_{\mathcal{O}}/z^dN_{\mathcal{O}})$ of~$\mathcal{T}$ by the fiberwise translation by $z^dN_{\mathcal{O}}$. For~$e>d$ there is an induced map $p_d^e:\mathcal{T}^e\rightarrow\mathcal{T}^d$, and we recover~$\mathcal{T}$ as the inverse limit of this system. Let us write $\mathcal{T}_{\leq\lambda}^d$ for the image of~$\mathcal{T}_{\leq\lambda}$ in the quotient~$\mathcal{T}^d$. The $p_d^e$ restrict to maps $p_d^e:\mathcal{T}_{\leq\lambda}^e\rightarrow\mathcal{T}_{\leq\lambda}^d$, and $\mathcal{T}_{\leq\lambda}$ is the inverse limit of this system.

If we choose $d\in\mathbb{Z}$ greater than a certain constant depending on~$\lambda$, then it follows that the fiberwise action of $z^dN_{\mathcal{O}}$ on~$\mathcal{T}$ stabilizes~$\mathcal{R}_{\leq\lambda}$. Hence for $d\gg0$ we can form the quotient $\mathcal{R}_{\leq\lambda}^d$ of~$\mathcal{R}_{\leq\lambda}$ by this action. It is a closed subset of $\mathcal{T}_{\leq\lambda}^d$. For $e>d$ the $p_d^e$ restrict to maps $p_d^e:\mathcal{R}_{\leq\lambda}^e\rightarrow\mathcal{R}_{\leq\lambda}^d$, and $\mathcal{R}_{\leq\lambda}$ is the inverse limit of this system.

We now consider group actions on the variety of triples. We suppose that there is an algebraic torus $F\cong(\mathbb{C}^*)^n$ acting on~$N$, and we consider the product group $\widetilde{G}=G\times F$. In physics terminology, $G$ is called a \emph{gauge group} while $F$ is called a \emph{flavor~symmetry group}. There is an action of  $\widetilde{G}_{\mathcal{O}}$ on~$\mathcal{R}$ extending the action of~$G_{\mathcal{O}}$. We can extend this to an action of $\widetilde{G}_{\mathcal{O}}\rtimes\mathbb{C}^*$ on~$\mathcal{R}$ where the $\mathbb{C}^*$ factor acts by rescaling the variable~$z$. Then there are induced $\widetilde{G}_{\mathcal{O}}$- and $\widetilde{G}_{\mathcal{O}}\rtimes\mathbb{C}^*$-actions on $\mathcal{R}_{\leq\lambda}^d$. They descend to actions of $\widetilde{G}_i$ and $\widetilde{G}_i\rtimes\mathbb{C}^*$ for sufficiently large~$i$ where $\widetilde{G}_i=\widetilde{G}(\mathcal{O}/z^i\mathcal{O})$. The space $\mathcal{R}_{\leq\lambda}^d$ is a variety, and so we can talk about its equivariant $K$-theory, as defined for example in~\cite{CG97}.

\begin{definition}[\cite{BFN18,VV10}]
\label{def:KtheoryR}
The $\widetilde{G}_{\mathcal{O}}$-equivariant $K$-theory of $\mathcal{R}_{G,N}$ is defined as the limit 
\[
K^{\widetilde{G}_\mathcal{O}}\left(\mathcal{R}_{G,N}\right)\coloneqq\lim_{\lambda}K^{\widetilde{G}_\mathcal{O}}\left(\mathcal{R}_{\leq\lambda}\right)
\]
where $K^{\widetilde{G}_\mathcal{O}}\left(\mathcal{R}_{\leq\lambda}\right)\coloneqq K^{\widetilde{G}_i}(\mathcal{R}_{\leq\lambda}^d)$ for some $d$,~$i\gg0$ and the limit diagram consists of all pushforward maps $K^{\widetilde{G}_\mathcal{O}}\left(\mathcal{R}_{\leq\lambda}\right)\rightarrow K^{\widetilde{G}_\mathcal{O}}\left(\mathcal{R}_{\leq\mu}\right)$ induced by embeddings $\mathcal{R}_{\leq\lambda}\hookrightarrow\mathcal{R}_{\leq\mu}$ for $\lambda\leq\mu$. The $\widetilde{G}_{\mathcal{O}}\rtimes\mathbb{C}^*$-equivariant $K$-theory $K^{\widetilde{G}_\mathcal{O}\rtimes\mathbb{C}^*}\left(\mathcal{R}_{G,N}\right)$ is defined similarly.
\end{definition}

The vector space $K^{\widetilde{G}_\mathcal{O}}\left(\mathcal{R}_{G,N}\right)$ in fact has a convolution product $*$, which makes it into a commutative $\mathbb{C}$-algebra by Remark~3.14 of~\cite{BFN18}. The space $K^{\widetilde{G}_\mathcal{O}\rtimes\mathbb{C}^*}\left(\mathcal{R}_{G,N}\right)$ likewise has a convolution product~$*$, which makes it into a noncommutative algebra over $K^{\mathbb{C}^*}(\mathrm{pt})\cong\mathbb{C}[q^{\pm1}]$. In the following, we will view it as an algebra over $\mathbb{C}[q^{\pm\frac{1}{2}}]$ by replacing the group $\mathbb{C}^*$ by its standard double cover. We refer to Remark~3.9(3) of~\cite{BFN18} for a detailed discussion of the convolution products in equivariant $K$-theory.

\begin{definition}[\cite{BFN18}]
\label{def:Coulombbranch}
The \emph{($K$-theoretic) Coulomb branch} of $(\widetilde{G},N)$ is the spectrum 
\[
\mathcal{M}_{\mathrm{C}}(\widetilde{G},N)\coloneqq\Spec\left(K^{\widetilde{G}_{\mathcal{O}}}(\mathcal{R}_{G,N}),*\right)
\]
of the algebra defined by the vector space $K^{\widetilde{G}_{\mathcal{O}}}(\mathcal{R}_{G,N})$ with its convolution product $*$. The \emph{quantized (K-theoretic) Coulomb branch} is defined as the noncommutative algebra $(K^{\widetilde{G}_\mathcal{O}\rtimes\mathbb{C}^*}\left(\mathcal{R}_{G,N}\right),*)$.
\end{definition}

Here we abuse notation slightly since we do not indicate the factorization $\widetilde{G}\cong G\times F$ explicitly in our notation for the Coulomb branch or its quantization.

\subsection{The abelian case}
\label{sec:TheAbelianCase}

Let us now consider the group $G=\mathrm{GL}_2^m$ and its subgroup $T=T_{\mathrm{GL}_2}^m$ where $T_{\mathrm{GL}_2}\subset\mathrm{GL}_2$ is the diagonal torus. We have bijections $\mathcal{R}_{T,0}\cong\Gr_T\cong T_{\mathcal{K}}/T_{\mathcal{O}}$. Note that $\mathcal{K}^*=\mathcal{K}\setminus\{0\}$ while $\mathcal{O}^*$ is the  set of formal power series with nonzero constant term. If~$f=\sum_{i\geq k}a_iz^i$ is an element of $\mathcal{K}^*$ with $a_k\neq0$, then we have $f=z^kg$ where $g=\sum_{i\geq0}a_{k+i}z^i$ is an element of~$\mathcal{O}^*$. It follows that any element of~$\mathcal{R}_{T,0}$ is represented by a matrix of the form 
\[
\begin{pmatrix}
z^{k_1} & & \\ 
& \ddots & \\
& & z^{k_{2m}}
\end{pmatrix}
\in T_{\mathcal{K}}
\]
for some $k_1,\dots,k_{2m}\in\mathbb{Z}$. Hence the space $\mathcal{R}_{T,0}$ is naturally identified with the lattice $X_*(T)$ of cocharacters of~$T$ with the discrete topology.

We would like to compute the $(T\times F)_{\mathcal{O}}\rtimes\mathbb{C}^*$-equivariant $K$-theory of this space where $F$ is an algebraic torus. In fact, we will assume $F=F_1\times F_2$ where $F_1=T_{\mathrm{GL}_2}^n$ and $F_2=(\mathbb{C}^*)^d$. Since $T_{\mathcal{O}}$ acts on $\mathcal{R}_{T,0}=T_{\mathcal{K}}/T_{\mathcal{O}}$ by left multiplication and $T_{\mathcal{K}}$ is abelian, we see that $(T\times F)_{\mathcal{O}}$ acts trivially on $\mathcal{R}_{T,0}$. The action of~$\mathbb{C}^*$ on $\mathcal{R}_{T,0}$ by rescaling~$z$ is also trivial. Therefore $(T\times F)_i\rtimes\mathbb{C}^*$ acts trivially on $\mathcal{R}_{\leq\lambda}^d$ for any $i\geq1$, and we have $K^{(T\times F)_{\mathcal{O}}\rtimes\mathbb{C}^*}(\mathcal{R}_{\leq\lambda})=K^{T\times F\rtimes\mathbb{C}^*}(\mathcal{R}_{\leq\lambda}^d)$. Then by definition 
\[
K^{(T\times F)_{\mathcal{O}}\rtimes\mathbb{C}^*}(\mathcal{R}_{T,0})\cong\bigoplus_{\mu\in X_*(T)}K^{T\times F\rtimes\mathbb{C}^*}(\mathrm{pt})
\]
as vector spaces where $\mathrm{pt}$ is the variety consisting of a single point. Note that the actions of $(T\times F)_{\mathcal{O}}$ and $\mathbb{C}^*$ on~$\mathcal{R}_{T,0}$ commute, and so the semidirect product is actually direct.

For any linear algebraic group $\Gamma$, the $\Gamma$-equivariant $K$-theory of a point is the group algebra $\mathbb{C}[X^*(\Gamma)]$ of the character lattice $X^*(\Gamma)$ (see~\cite{CG97}, Section~5.2.1). We will write 
\[
\mathbb{C}[X^*(T\times F\times\mathbb{C}^*)]\cong\mathbb{C}\left[q^{\pm\frac{1}{2}},z_{k,1}^{\pm1},z_{k,2}^{\pm1},z_a^{\pm1},w_{i,1}^{\pm1},w_{i,2}^{\pm1}\right].
\]
Here the variables $z_{k,1}$ and $z_{k,2}$ correspond to the characters of the $k$th factor of $F_1=T_{\mathrm{GL}_2}^n$ given by $\diag(t_1,t_2)\mapsto t_1$ and $\diag(t_1,t_2)\mapsto t_2$, respectively. The variable $z_a$ corresponds to the character of $F_2=(\mathbb{C}^*)^d$ that projects onto the $a$th factor. Finally, the variables $w_{i,1}$ and $w_{i,2}$ correspond to the characters of the $i$th factor~$T_{\mathrm{GL}_2}\subset T$ given by $\diag(t_1,t_2)\mapsto t_1$ and $\diag(t_1,t_2)\mapsto t_2$, respectively. We will also write 
\[
\mathbb{C}[X_*(T)]\cong\mathbb{C}\left[D_{i,1}^{\pm1},D_{i,2}^{\pm1}\right]
\]
where $D_{i,1}$ and~$D_{i,2}$ correspond to the cocharacters of the $i$th factor $T_{\mathrm{GL}_2}\subset T$ given by $t\mapsto\diag(t,1)$ and $t\mapsto\diag(1,t)$, respectively. Thus we see that $K^{(T\times F)_{\mathcal{O}}\rtimes\mathbb{C}^*}(\mathcal{R}_{T,0})$ is generated as a $\mathbb{C}$-algebra by the variables $q^{\pm\frac{1}{2}}$, $z_{k,1}^{\pm1}$, $z_{k,2}^{\pm1}$, $z_a^{\pm1}$, $w_{i,1}^{\pm1}$, $w_{i,2}^{\pm1}$, $D_{i,1}^{\pm1}$, and~$D_{i,2}^{\pm1}$. In fact, if we write $\mathcal{D}_{q,\mathbf{z}}^0$ for the $\mathbb{C}[q^{\pm\frac{1}{2}},z_{k,1}^{\pm1},z_{k,2}^{\pm1},z_a^{\pm1}]$-algebra generated by the variables $w_{i,1}^{\pm1}$, $w_{i,2}^{\pm1}$, $D_{i,1}^{\pm1}$, $D_{i,2}^{\pm1}$, subject to the relations 
\[
[D_{i,r},D_{j,s}]=[w_{i,r},w_{j,s}]=0, \quad D_{i,r}w_{j,s}=q^{2\delta_{ij}\delta_{rs}}w_{j,s}D_{i,r},
\]
then we have an isomorphism $K^{(T\times F)_{\mathcal{O}}\rtimes\mathbb{C}^*}(\mathcal{R}_{T,0})\cong\mathcal{D}_{q,\mathbf{z}}^0$ of algebras over~$\mathbb{C}$.

Now suppose we have a representation $N$ of~$G$, and let us use the same notation for the restriction of this representation to the subgroup~$T\subset G$. Then by restricting the embedding $\mathcal{T}_{T,N}\cong T_{\mathcal{K}}\times_{T_{\mathcal{O}}}N_{\mathcal{O}}\hookrightarrow G_{\mathcal{K}}\times_{G_{\mathcal{O}}}N_{\mathcal{O}}\cong\mathcal{T}_{G,N}$, we obtain a map 
\[
\iota:\mathcal{R}_{T,N}\rightarrow\mathcal{R}_{G,N}.
\]
For any $K^{T\times F\rtimes\mathbb{C}^*}(\mathrm{pt})$-module $M$, let us write $M'$ for the localization of~$M$ at the multiplicative set generated by the expressions $w_{i,r}-q^kw_{i,s}$ for $k\in\mathbb{Z}$ and $r\neq s$ and the expressions $1-q^k$ for $k\in\mathbb{Z}\setminus\{0\}$. By the localization theorem, the pushforward $\iota_*$ provides an isomorphism $\iota_*:K^{(T\times F)_{\mathcal{O}}\rtimes\mathbb{C}^*}(\mathcal{R}_{T,N})'\stackrel{\sim}{\rightarrow}K^{(T\times F)_{\mathcal{O}}\rtimes\mathbb{C}^*}(\mathcal{R}_{G,N})'$. We also have $K^{(G\times F)_{\mathcal{O}}\rtimes\mathbb{C}^*}(\mathcal{R}_{G,N})\cong K^{(T\times F)_{\mathcal{O}}\rtimes\mathbb{C}^*}(\mathcal{R}_{G,N})^W$ where $W$ is the Weyl~group, and so $(\iota_*)^{-1}$ restricts to an embedding 
\[
(\iota_*)^{-1}:K^{(G\times F)_{\mathcal{O}}\rtimes\mathbb{C}^*}(\mathcal{R}_{G,N})\hookrightarrow K^{(T\times F)_{\mathcal{O}}\rtimes\mathbb{C}^*}(\mathcal{R}_{T,N})'.
\]
On the other hand, the zero section of the projection $\pi:\mathcal{T}_{T,N}\rightarrow\Gr_T$ defines a map 
\[
\zeta:\mathcal{R}_{T,0}\cong\Gr_T\rightarrow\mathcal{R}_{T,N}
\]
whose pullback $\zeta^*$ provides a map
\[
\zeta^*:K^{(T\times F)_{\mathcal{O}}\rtimes\mathbb{C}^*}(\mathcal{R}_{T,N})'\rightarrow K^{(T\times F)_{\mathcal{O}}\rtimes\mathbb{C}^*}(\mathcal{R}_{T,0})'.
\]
Thus we have an embedding 
\begin{equation}
\label{eqn:embedding}
\zeta^*(\iota_*)^{-1}:K^{(G\times F)_{\mathcal{O}}\rtimes\mathbb{C}^*}(\mathcal{R}_{G,N})\hookrightarrow\mathcal{D}_{q,\mathbf{z}}
\end{equation}
where $\mathcal{D}_{q,\mathbf{z}}=(\mathcal{D}_{q,\mathbf{z}}^0)'$.

\subsection{Changing the gauge and flavor groups}

Let us continue with the notation of the previous subsection, writing $G=\mathrm{GL}_2^m$ and $F=F_1\times F_2$ where $F_1=T_{\mathrm{GL}_2}^n$ and $F_2=(\mathbb{C}^*)^d$. Let us also write $H=\mathrm{SL}_2^m$ and $L=L_1\times L_2$ where $L_1=T_{\mathrm{SL}_2}^n$ and $L_2=(\mathbb{C}^*)^d$.

\begin{lemma}
\label{lem:KSL2GL2}
Take notation as in the previous paragraph. If we regard $K^{(G\times F)_{\mathcal{O}}\rtimes\mathbb{C}^*}(\mathcal{R}_{G,N})$ as a subalgebra of $\mathcal{D}_{q,\mathbf{z}}$ using the embedding~\eqref{eqn:embedding}, then we have 
\[
K^{(H\times L)_{\mathcal{O}}\rtimes\mathbb{C}^*}(\mathcal{R}_{H,N})\cong K^{(G\times F)_{\mathcal{O}}\rtimes\mathbb{C}^*}(\mathcal{R}_{G,N})^{(\mathbb{C}^*)^m}/(z_{k,1}z_{k,2}-1,w_{i,1}w_{i,2}-1)
\]
where the $i$th factor of $(\mathbb{C}^*)^m$ acts by simultaneously rescaling the generators~$D_{i,1}$ and~$D_{i,2}$.
\end{lemma}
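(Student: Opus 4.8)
The plan is to deduce the identity from the abelianized descriptions of both sides, using that every ingredient of $(H\times L,N)$ is obtained from $(G\times F,N)$ by replacing $\mathrm{GL}_2$-type tori by $\mathrm{SL}_2$-type tori. Write $T_G=T_{\mathrm{GL}_2}^m$ and $T_H=T_{\mathrm{SL}_2}^m$ for the diagonal tori of the gauge groups. The inclusions $T_H\hookrightarrow T_G$ and $L_1\hookrightarrow F_1$ (with $L_2=F_2$) sit in short exact sequences with cokernels $(\mathbb{C}^*)^m$ and $(\mathbb{C}^*)^n$, given by the determinants of the $\mathrm{GL}_2$-factors. On character lattices these induce surjections whose kernels are generated precisely by $w_{i,1}w_{i,2}-1$ and $z_{k,1}z_{k,2}-1$; on cocharacters of the gauge torus (the flavor cocharacters do not enter the construction of $\mathcal{R}$) they identify $X_*(T_H)$ with the sublattice $\{\mu:\mu_{i,1}+\mu_{i,2}=0 \ \forall i\}$ of $X_*(T_G)$, which is exactly the set of components of $\Gr_{T_G}\cong X_*(T_G)$ fixed by the $(\mathbb{C}^*)^m$ that simultaneously rescales $D_{i,1}$ and $D_{i,2}$. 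Thus the change of the gauge cocharacter lattice is implemented by $(\mathbb{C}^*)^m$-invariants, and the change of the gauge and flavor character lattices by the quotient by $w_{i,1}w_{i,2}-1$ and $z_{k,1}z_{k,2}-1$.

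The first step is the torus-level geometric input. The inclusion $(T_H)_{\mathcal{K}}\hookrightarrow(T_G)_{\mathcal{K}}$ induces closed embeddings $\mathcal{R}_{T_H,N}\hookrightarrow\mathcal{R}_{T_G,N}$ and $\mathcal{R}_{T_H,0}\hookrightarrow\mathcal{R}_{T_G,0}$, and since $(T_H)_{\mathcal{K}}\cap(T_G)_{\mathcal{O}}=(T_H)_{\mathcal{O}}$ these identify $\mathcal{R}_{T_H,N}$ with the union of those components $\mathcal{R}_{T_G,N}^\mu$ with $\mu$ in the above sublattice; for such $\mu$ the space $\mathcal{R}_{T_H,N}^\mu$ \emph{equals} $\mathcal{R}_{T_G,N}^\mu$. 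Hence the $(T_H\times L)_{\mathcal{O}}\rtimes\mathbb{C}^*$-equivariant $K$-theory of $\mathcal{R}_{T_H,N}^\mu$ is obtained from the $(T_G\times F)_{\mathcal{O}}\rtimes\mathbb{C}^*$-equivariant $K$-theory of $\mathcal{R}_{T_G,N}^\mu$ by restriction of equivariance, which on coefficient rings is the surjection $\mathbb{C}[X^*(T_G\times F\times\mathbb{C}^*)]\twoheadrightarrow\mathbb{C}[X^*(T_H\times L\times\mathbb{C}^*)]$ and carries structure sheaves to structure sheaves. Summing over $\mu$ and localizing as in Section~\ref{sec:TheAbelianCase} (taking for $H$ the image of the multiplicative set used for $G$) gives
\[
K^{(T_H\times L)_{\mathcal{O}}\rtimes\mathbb{C}^*}\!\big(\mathcal{R}_{T_H,N}\big)'\cong\Big(K^{(T_G\times F)_{\mathcal{O}}\rtimes\mathbb{C}^*}\!\big(\mathcal{R}_{T_G,N}\big)'\Big)^{(\mathbb{C}^*)^m}\big/\big(z_{k,1}z_{k,2}-1,\,w_{i,1}w_{i,2}-1\big),
\]
and the same identity with $N=0$, i.e.\ for $\mathcal{D}_{q,\mathbf{z}}$ itself. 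Here one checks that the $(\mathbb{C}^*)^m$-fixed part is the sum over the traceless $\mu$ (the $i$th factor acts on the $\mu$-summand through $\mu\mapsto\mu_{i,1}+\mu_{i,2}$ and trivially on coefficients), and that $w_{i,1}w_{i,2}$, $z_{k,1}z_{k,2}$ are central in the $(\mathbb{C}^*)^m$-invariant subalgebra (the $q$-twists from $D_{i,1}$ and $D_{i,2}$ cancel once these appear with opposite exponents), so the quotient is by a two-sided ideal and commutes with the localization.

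The second step is to transport this through the abelianization. The constructions of Section~\ref{sec:TheAbelianCase} apply verbatim to $(H,N)$: one has the localization isomorphism $\iota^H_*$, the Weyl descent $K^{(H\times L)_{\mathcal{O}}\rtimes\mathbb{C}^*}(\mathcal{R}_{H,N})\cong K^{(T_H\times L)_{\mathcal{O}}\rtimes\mathbb{C}^*}(\mathcal{R}_{H,N})^W$ with $W\cong(\mathbb{Z}/2)^m$ the common Weyl group of $G$ and $H$, and the zero-section pullback $\zeta^{H,*}$, yielding an embedding of $K^{(H\times L)_{\mathcal{O}}\rtimes\mathbb{C}^*}(\mathcal{R}_{H,N})$ into the right-hand side of the lemma. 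Since the morphisms $\iota^H,\zeta^H$ for $(H,N)$ are the restrictions of $\iota,\zeta$ for $(G,N)$ to the traceless components, the squares relating $\zeta^*(\iota_*)^{-1}$ for the two pairs commute; together with the fact that $W$-invariants, $(\mathbb{C}^*)^m$-invariants, localization, and the quotient by the central ideal $(z_{k,1}z_{k,2}-1,w_{i,1}w_{i,2}-1)$ all commute with one another, this identifies the image of $K^{(H\times L)_{\mathcal{O}}\rtimes\mathbb{C}^*}(\mathcal{R}_{H,N})$ with $K^{(G\times F)_{\mathcal{O}}\rtimes\mathbb{C}^*}(\mathcal{R}_{G,N})^{(\mathbb{C}^*)^m}$ modulo that ideal, which is the claim. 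The main obstacle is the torus-level isomorphism of the second paragraph: one must verify that the normal-bundle Euler classes entering $(\iota_*)^{-1}$ (and the zero-section pullback) for $(T_G,N)$ restrict correctly under restriction of equivariance to $(T_H,N)$ — which is precisely why the localizing set for $H$ must be the image of that for $G$ — and that $\mathcal{R}_{T_H,N}^\mu=\mathcal{R}_{T_G,N}^\mu$ for traceless $\mu$ is an equality of actual spaces, not merely an abstract isomorphism, so that all the maps match; the centrality check making the quotient well-defined and localization-compatible is the remaining routine point.
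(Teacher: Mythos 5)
Your argument is correct in outline and uses the same two ingredients as the paper --- restriction of equivariance to implement the quotient by $(z_{k,1}z_{k,2}-1,\,w_{i,1}w_{i,2}-1)$, and the grading by components of the affine Grassmannian to implement the $(\mathbb{C}^*)^m$-invariants --- but it assembles them in a genuinely different order. The paper's proof never compares $\mathcal{R}_{T_H,N}$ with $\mathcal{R}_{T_G,N}$ directly; instead it introduces the hybrid object $K^{(H\times L)_{\mathcal{O}}\rtimes\mathbb{C}^*}(\mathcal{R}_{G,N})$ (small group, big space), identifies it with the quotient of $K^{(G\times F)_{\mathcal{O}}\rtimes\mathbb{C}^*}(\mathcal{R}_{G,N})$ via a commutative square whose left arrow is the base change $K^{(G\times F)_{\mathcal{O}}\rtimes\mathbb{C}^*}(\mathcal{R}_{G,N})\rightarrow K^{(G\times F)_{\mathcal{O}}\rtimes\mathbb{C}^*}(\mathcal{R}_{G,N})\otimes_{K^{(G\times F)_{\mathcal{O}}\rtimes\mathbb{C}^*}(\mathrm{pt})}K^{(H\times L)_{\mathcal{O}}\rtimes\mathbb{C}^*}(\mathrm{pt})$ (visibly surjective), and only then passes to the neutral component $\mathcal{R}_{H,N}\cong\mathcal{R}_1$ via the $\pi_1(G)\cong\mathbb{Z}^m$-grading. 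You instead prove the whole statement at the abelianized level, identifying $\mathcal{R}_{T_H,N}$ with the traceless components of $\mathcal{R}_{T_G,N}$, and then descend through $\zeta^*(\iota_*)^{-1}$ for both pairs. This works, but the price is exactly the ``main obstacle'' you flag: you must verify that the localization pushforwards, their inverses (hence the Euler classes of the normal bundles, which live in the localized rings), the zero-section pullbacks, and the Weyl-invariant descent for $(H,N)$ are all obtained from those for $(G,N)$ by restriction --- a collection of compatibility checks that the paper's choice of intermediate object avoids entirely, since there the only geometric input beyond Section~3.3 is that $\Gr_H$ is the neutral component of $\Gr_G$. Your route has the compensating advantage that surjectivity of the quotient map is manifest componentwise (each torus component contributes a free rank-one module over the representation ring), whereas the paper asserts the surjectivity of its base-change map with only a brief justification. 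Both proofs must, and do, observe that $w_{i,1}w_{i,2}-1$ and $z_{k,1}z_{k,2}-1$ generate a two-sided ideal in the $(\mathbb{C}^*)^m$-invariant subalgebra.
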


\begin{proof}
Let us write $T_{\mathrm{GL}_2}$ and $T_{\mathrm{SL}_2}$ for the diagonal subgroups of~$\mathrm{GL}_2$ and~$\mathrm{SL}_2$, respectively. Recall that $w_{i,1}$,~$w_{i,2}\in\mathbb{C}[X^*(T_{\mathrm{GL}_2}^m)]$ correspond to the characters of the $i$th factor $T_{\mathrm{GL}_2}\subset T_{\mathrm{GL}_2}^m$ given by $\diag(t_1,t_2)\mapsto t_1$ and $\diag(t_1,t_2)\mapsto t_2$. By restricting characters, we get a surjective algebra homomorphism $\mathbb{C}[X^*(T_{\mathrm{GL}_2}^m)]\twoheadrightarrow\mathbb{C}[X^*(T_{\mathrm{SL}_2}^m)]$. The kernel of this map is the ideal generated by expressions of the form $w_{i,1}w_{i,2}-1$, so we have $\mathbb{C}[X^*(T_{\mathrm{SL}_2}^m)]\cong\mathbb{C}[X^*(T_{\mathrm{GL}_2}^m)]/(w_{i,1}w_{i,2}-1)$. Similarly, $\mathbb{C}[X^*(L)]\cong\mathbb{C}[X^*(F)]/(z_{k,1}z_{k,2}-1)$. Now consider the $K$-theory $K^{(T_{\mathrm{SL}_2}^m\times L)_{\mathcal{O}}\rtimes\mathbb{C}^*}(\mathcal{R}_{T_{\mathrm{GL}_2}^m,0})$ defined as in Definition~\ref{def:KtheoryR} by decomposing the variety of triples into finite-dimensional pieces and taking a limit. As in Section~\ref{sec:TheAbelianCase}, we~compute 
\begin{align*}
K^{(T_{\mathrm{SL}_2}^m\times L)_{\mathcal{O}}\rtimes\mathbb{C}^*}(\mathcal{R}_{T_{\mathrm{GL}_2}^m,0}) &\cong \bigoplus_{\mu\in X_*(T_{\mathrm{GL}_2}^m)}K^{T_{\mathrm{SL}_2}^m\times L\rtimes\mathbb{C}^*}(\mathrm{pt}) \\
&\cong\mathcal{D}_{q,\mathbf{z}}^0/(z_{k,1}z_{k,2}-1,w_{i,1}w_{i,2}-1)
\end{align*}
where the last line is the quotient of $\mathcal{D}_{q,\mathbf{z}}^0$ by the right $\mathcal{D}_{q,\mathbf{z}}^0$-submodule generated by all expressions of the form $z_{k,1}z_{k,2}-1$ and $w_{i,1}w_{i,2}-1$.

In Section~\ref{sec:TheAbelianCase}, we described the embedding 
\[
K^{(G\times F)_{\mathcal{O}}\rtimes\mathbb{C}^*}(\mathcal{R}_{G,N})\hookrightarrow\mathcal{D}_{q,\mathbf{z}}
\]
where the right hand side denotes the localization at the multiplicative set generated by expressions of the form $w_{i,1}-q^kw_{i,2}$ and $1-q^k$. The same construction provides an embedding 
\begin{equation}
\label{eqn:modifiedembedding}
K^{(H\times L)_{\mathcal{O}}\rtimes\mathbb{C}^*}(\mathcal{R}_{G,N})\hookrightarrow\mathcal{D}_{q,\mathbf{z}}/(z_{k,1}z_{k,2}-1,w_{i,1}w_{i,2}-1)
\end{equation}
where the left hand side is defined similarly to Definition~\ref{def:KtheoryR}.

Note that if $X$ is any variety with an action of an algebraic group~$\Gamma$ and $\Pi\leq\Gamma$ is a closed subgroup, then any $\Gamma$-equivariant coherent sheaf on~$X$ can be regarded as an $\Pi$-equivariant coherent sheaf. Since the equivariant $K$-theory of~$X$ is defined as the Grothendieck group of the category of equivariant coherent sheaves on~$X$, it~follows that there is a map $K^\Gamma(X)\rightarrow K^\Pi(X)$. Therefore, by Definition~\ref{def:KtheoryR}, we obtain the maps $K^{(G\times F)_{\mathcal{O}}\rtimes\mathbb{C}^*}(\mathcal{R}_{G,N})\rightarrow K^{(H\times L)_{\mathcal{O}}\rtimes\mathbb{C}^*}(\mathcal{R}_{G,N})$ and $K^{(T_{\mathrm{GL}_2}^m\times F)_{\mathcal{O}}\rtimes\mathbb{C}^*}(\mathcal{R}_{T_{\mathrm{GL}_2}^m,0})\rightarrow K^{(T_{\mathrm{SL}_2}^m\times L)_{\mathcal{O}}\rtimes\mathbb{C}^*}(\mathcal{R}_{T_{\mathrm{GL}_2}^m,0})$. The first of these maps, together with the localization of the second, provide the vertical arrows of the commutative diagram 
\[
\xymatrix{
K^{(G\times F)_{\mathcal{O}}\rtimes\mathbb{C}^*}(\mathcal{R}_{G,N}) \ar[d] \ar@{^{(}->}[r] & \mathcal{D}_{q,\mathbf{z}} \ar[d] \\
K^{(H\times L)_{\mathcal{O}}\rtimes\mathbb{C}^*}(\mathcal{R}_{G,N}) \ar@{^{(}->}[r] & \mathcal{D}_{q,\mathbf{z}}/(z_{k,1}z_{k,2}-1,w_{i,1}w_{i,2}-1).
}
\]
Concretely, the vertical arrow on the right hand side is the quotient by expressions of the form $w_{i,1}w_{i,2}-1$ and $z_{k,1}z_{k,2}-1$, and the vertical arrow on the left hand side is the natural map $K^{(G\times F)_{\mathcal{O}}\rtimes\mathbb{C}^*}(\mathcal{R}_{G,N})\rightarrow K^{(G\times F)_{\mathcal{O}}\rtimes\mathbb{C}^*}(\mathcal{R}_{G,N})\otimes_{K^{(G\times F)_{\mathcal{O}}\rtimes\mathbb{C}^*}(\mathrm{pt})}K^{(H\times L)_{\mathcal{O}}\rtimes\mathbb{C}^*}(\mathrm{pt})$, which is easily seen to be surjective. It follows that there is an isomorphism 
\begin{equation}
\label{eqn:KSL2GL2}
K^{(H\times L)_{\mathcal{O}}\rtimes\mathbb{C}^*}(\mathcal{R}_{G,N})\cong K^{(G\times F)_{\mathcal{O}}\rtimes\mathbb{C}^*}(\mathcal{R}_{G,N})/(z_{k,1}z_{k,2}-1,w_{i,1}w_{i,2}-1)
\end{equation}
where the right hand side is the quotient of the image of $K^{(G\times F)_{\mathcal{O}}\rtimes\mathbb{C}^*}(\mathcal{R}_{G,N})$ by the right submodule generated by $z_{k,1}z_{k,2}-1$ and $w_{i,1}w_{i,2}-1$.

Now the connected components of $\Gr_G$ are parametrized by the fundamental group $\pi_1(G)$. The space $\mathcal{R}_{G,N}$ is homotopy equivalent to~$\Gr_G$, so we have a decomposition $\mathcal{R}_{G,N}=\coprod_{\gamma\in\pi_1(G)}\mathcal{R}_\gamma$ into connected components $\mathcal{R}_\gamma$. There is an associated direct sum decomposition 
\[
K^{(H\times L)_{\mathcal{O}}\rtimes\mathbb{C}^*}(\mathcal{R}_{G,N})=\bigoplus_{\gamma\in\pi_1(G)}K^{(H\times L)_{\mathcal{O}}\rtimes\mathbb{C}^*}(\mathcal{R}_\gamma)
\]
so that the $K$-theory of $\mathcal{R}_{G,N}$ is graded by~$\pi_1(G)\cong\mathbb{Z}^m$. Thus there is a natural action of~$(\mathbb{C}^*)^m$ on this $K$-theory. The total degrees in the variables~$D_{i,1}$,~$D_{i,2}$ for $i=1,\dots,m$ define a $\mathbb{Z}^m$-grading of~$\mathcal{D}_{q,\mathbf{z}}/(z_{k,1}z_{k,2}-1,w_{i,1}w_{i,2}-1)$ and an associated $(\mathbb{C}^*)^m$-action. The embedding~\eqref{eqn:modifiedembedding} preserves the gradings and therefore intertwines the $(\mathbb{C}^*)^m$-actions. The affine Grassmannian $\Gr_H$ is identified with the component of $\Gr_G$ corresponding to $1\in\pi_1(G)$, so there is an identification $\mathcal{R}_{H,N}\cong\mathcal{R}_1$. Hence, by equation~\eqref{eqn:KSL2GL2}, we have 
\begin{align*}
K^{(H\times L)_{\mathcal{O}}\rtimes\mathbb{C}^*}(\mathcal{R}_{H,N}) &\cong K^{(H\times L)_{\mathcal{O}}\rtimes\mathbb{C}^*}(\mathcal{R}_{G,N})^{(\mathbb{C}^*)^m} \\
&\cong K^{(G\times F)_{\mathcal{O}}\rtimes\mathbb{C}^*}(\mathcal{R}_{G,N})^{(\mathbb{C}^*)^m}/(z_{k,1}z_{k,2}-1,w_{i,1}w_{i,2}-1).
\end{align*}
The last line is the quotient of $K^{(G\times F)_{\mathcal{O}}\rtimes\mathbb{C}^*}(\mathcal{R}_{G,N})^{(\mathbb{C}^*)^m}$ by the right submodule generated by $z_{k,1}z_{k,2}-1$ and $w_{i,1}w_{i,2}-1$, which is in fact a two-sided ideal in this ring.
\end{proof}

\subsection{Monopole operators}
\label{sec:MonopoleOperators}

Let $Q=(Q_0,Q_1,s,t)$ be a finite quiver where $Q_0$ is the set of vertices, $Q_1$ is the set of arrows, and $s:Q_1\rightarrow Q_0$ and $t:Q_1\rightarrow Q_0$ are the source and target maps, respectively. Let us fix a decomposition $Q_0=\mathcal{G}\sqcup\mathcal{F}$ of the set of vertices into a set $\mathcal{G}$ of \emph{gauge~nodes} and a set $\mathcal{F}$ of \emph{framing~nodes} such that no arrow of~$Q$ ends at a framing node. We also fix a pair of finite-dimensional complex vector spaces 
\[
V=\bigoplus_{i\in\mathcal{G}}V_i, \quad U=\bigoplus_{i\in\mathcal{F}}U_i
\]
graded by these sets~$\mathcal{G}$ and~$\mathcal{F}$. From these we construct the vector space 
\begin{equation}
\label{eqn:quiverrep}
N=\bigoplus_{s(a)\in\mathcal{G}}\Hom_{\mathbb{C}}(V_{s(a)},V_{t(a)})\oplus\bigoplus_{s(a)\in\mathcal{F}}\Hom_{\mathbb{C}}(U_{s(a)},V_{t(a)})
\end{equation}
where the first sum is over all arrows $a\in Q_1$ such that $s(a)$ is a gauge node and the second sum is over all $a\in Q_1$ such that $s(a)$ is a framing node.

Next we define the gauge group 
\[
G=\prod_{i\in\mathcal{G}}\mathrm{GL}(V_i),
\]
which acts naturally on the vector space~$N$. Choosing a basis for each~$U_i$, we obtain a diagonal subgroup $F_1\subset\prod_i\mathrm{GL}(U_i)$, which acts naturally on~$N$. We also consider the torus $F_2=(\mathbb{C}^*)^{Q_1}$ determined by~$Q$. There is an action of $F_2$ on~$N$ where the $\mathbb{C}^*$-factor corresponding to an arrow $a\in Q_1$ acts by rescaling the corresponding hom-set in the direct sum decomposition~\eqref{eqn:quiverrep}. Then the full flavor symmetry group is the product 
\[
F=F_1\times F_2.
\]
The actions of~$G$ and~$F$ on the vector space~$N$ commute, and hence we obtain an action of the extended gauge group $\widetilde{G}=G\times F$.

We can now define generators for the quantized Coulomb branch of~$(\widetilde{G},N)$. We use the notation $c_i\coloneqq\dim U_i$ and $d_i\coloneqq\dim V_i$. To each framing node $k\in\mathcal{F}$, we associate variables~$z_{k,l}$ for $l=1,\dots,c_k$, and to each $a\in Q_1$, we associate a variable~$z_a$. We then write $\mathcal{D}_{q,\mathbf{z}}^0$ for the $\mathbb{C}[q^{\pm\frac{1}{2}},z_{k,l}^{\pm1},z_a^{\pm1}]$-algebra generated by variables~$D_{i,r}$,~$w_{i,r}$ and their inverses, for $i\in\mathcal{G}$ and $r=1,\dots,d_i$, subject to the commutation relations 
\[
[D_{i,r},D_{j,s}]=[w_{i,r},w_{j,s}]=0, \quad D_{i,r}w_{j,s}=q^{2\delta_{ij}\delta_{rs}}w_{j,s}D_{i,r}.
\]
We write $\mathcal{D}_{q,\mathbf{z}}$ for the localization of this algebra $\mathcal{D}_{q,\mathbf{z}}^0$ at the multiplicative set generated by the expressions $w_{i,r}-q^\mu w_{i,s}$ for $\mu\in\mathbb{Z}$ and $r\neq s$ and the expressions $1-q^\mu$ for $\mu\in\mathbb{Z}\setminus\{0\}$. Note that our $q$ corresponds to the variable denoted~$\boldsymbol{v}$ in~\cite{FT19a,FT19b}.

\begin{definition}[\cite{BFN19,FT19a,SS19}]
\label{def:monopole}
Fix $i\in\mathcal{G}$ and let $\mathcal{S}_i=\{a\in Q_1:s(a)\in\mathcal{F},t(a)=i\}$. For each integer $n=1,\dots,d_i$ and each symmetric Laurent polynomial~$f$ in $n$~variables, we define elements $E_{i,n}[f]$,~$F_{i,n}[f]\in\mathcal{D}_{q,\mathbf{z}}$, called \emph{dressed minuscule monopole operators}, by the formulas 
\[
E_{i,n}[f] = \sum_{\substack{\mathcal{I}\subset\{1,\dots,d_i\} \\ |\mathcal{I}|=n}}f(w_{i,\mathcal{I}})\mathcal{P}_{i,\mathcal{I}}\prod_{r\in\mathcal{I}}D_{i,r}
\]
and 
\[
F_{i,n}[f] = \sum_{\substack{\mathcal{I}\subset\{1,\dots,d_i\} \\ |\mathcal{I}|=n}}f(q^{-2}w_{i,\mathcal{I}})\prod_{\substack{a\in\mathcal{S}_i \\  r\in \mathcal{I}}}\prod_l(1-qz_{s(a),l}z_aw_{i,r}^{-1})\mathcal{Q}_{i,\mathcal{I}}\prod_{r\in\mathcal{I}}D_{i,r}^{-1}
\]
where 
\[
\mathcal{P}_{i,\mathcal{I}} =\frac{\prod_{\substack{a:i\rightarrow j \\ r\in\mathcal{I}}}\prod_{s:s\not\in\mathcal{I}\text{ if }i=j}(1-qz_aw_{i,r}w_{j,s}^{-1})}{\prod_{\substack{r\in\mathcal{I} \\ s\not\in\mathcal{I}}}(1-w_{i,s}w_{i,r}^{-1})},
\qquad
\mathcal{Q}_{i,\mathcal{I}} =\frac{\prod_{\substack{a:j\rightarrow i \\ r\in\mathcal{I}}}\prod_{s:s\not\in\mathcal{I}\text{ if }i=j}(1-qz_aw_{j,s}w_{i,r}^{-1})}{\prod_{\substack{r\in\mathcal{I} \\ s\not\in\mathcal{I}}}(1-w_{i,r}w_{i,s}^{-1})},
\]
and we write $f(w_{i,\mathcal{I}})$ (respectively, $f(q^{-2}w_{i,\mathcal{I}})$) for the element of $\mathcal{D}_{q,\mathbf{z}}$ obtained by substituting the elements $(w_{i,r})_{r\in\mathcal{I}}$ (respectively, $(q^{-2}w_{i,r})_{r\in\mathcal{I}}$) into the argument of~$f$. The function~$f$ is called the \emph{dressing} for $E_{i,n}[f]$ and $F_{i,n}[f]$.
\end{definition}

\subsection{Generating the Coulomb branch}
\label{sec:GeneratingTheCoulombBranch}

We now specialize the discussion of the last subsection to the case where $U_i=\mathbb{C}^2$ for all $i\in\mathcal{F}$ and $V_i=\mathbb{C}^2$ for all $i\in\mathcal{G}$. In this case, the algebra $\mathcal{D}_{q,\mathbf{z}}$ specializes to the one considered in Section~\ref{sec:TheAbelianCase}. We write~$\mathcal{A}_{q,\mathbf{z}}$ for the $\mathbb{C}[q^{\pm\frac{1}{2}},z_{k,l}^{\pm1},z_a^{\pm1}]$-subalgebra generated by all dressed minuscule monopole operators together with all symmetric Laurent polynomials in the variables~$w_{i,r}$ for each~$i$.

We are interested in particular in the case where $Q$ is one of the quivers illustrated in Figure~\ref{fig:twoquivers}. When drawing pictures of quivers, we represent gauge nodes by circles and framing nodes by squares. The number inside a circle (respectively, square) representing a node~$i$ is the dimension of the vector space~$V_i$ (respectively,~$U_i$) assigned to this node.

\begin{figure}[ht]
\begin{subfigure}{0.3\textwidth}
\begin{center}
\[
\xymatrix{
*+[F]{2} \save +<3mm,0mm>\ar[rr]+<-3mm,0mm>\restore & & *+[o][F-]{2} & & *+[F]{2} \save +<-3mm,0mm>\ar[ll]+<3mm,0mm>\restore
}
\]
\end{center}
\caption{\label{subfig:typeA}}
\end{subfigure}
\begin{subfigure}{0.3\textwidth}
\begin{center}
\[
\xymatrix{
*+[o][F-]{2} \ar@(ur,ul)
}
\]
\end{center}
\caption{\label{subfig:Jordan}}
\end{subfigure}
\caption{Quivers in Proposition~\ref{prop:generate}.\label{fig:twoquivers}}
\end{figure}
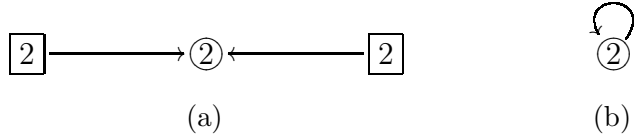

We have the following description of the quantized Coulomb branches defined by these quivers (see~\cite{W19} for a more general homology version of this statement).

\begin{proposition}
\label{prop:generate}
Let $(\widetilde{G},N)$ be the group and representation defined by either of the quivers illustrated in Figure~\ref{fig:twoquivers}. Then the embedding~\eqref{eqn:embedding} provides an isomorphism of algebras 
\[
K^{(G\times F)_{\mathcal{O}}\rtimes\mathbb{C}^*}(\mathcal{R}_{G,N})\cong\mathcal{A}_{q,\mathbf{z}}.
\]
\end{proposition}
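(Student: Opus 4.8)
The plan is to identify $K^{(G\times F)_{\mathcal{O}}\rtimes\mathbb{C}^*}(\mathcal{R}_{G,N})$, sitting inside $\mathcal{D}_{q,\mathbf{z}}$ via the embedding~\eqref{eqn:embedding}, with the subalgebra $\mathcal{A}_{q,\mathbf{z}}$ generated by the dressed minuscule monopole operators and the symmetric Laurent polynomials in the $w_{i,r}$. One inclusion is essentially definitional: the symmetric Laurent polynomials in the $w_{i,r}$ are the image of $K^{(G\times F)_{\mathcal{O}}\rtimes\mathbb{C}^*}(\mathrm{pt})$ under restriction to the fiber over the base point of $\Gr_G$, and the dressed minuscule monopole operators $E_{i,n}[f]$, $F_{i,n}[f]$ are, by the formulas of Braverman--Finkelberg--Nakajima~\cite{BFN19} and Finkelberg--Tsymbaliuk~\cite{FT19a} quoted in Definition~\ref{def:monopole}, precisely the images of natural classes supported on the minuscule $\widetilde{G}_{\mathcal{O}}$-orbit closures $\mathcal{R}_{\leq\varpi_{i,n}}$ dressed by tautological bundle classes. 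Hence $\mathcal{A}_{q,\mathbf{z}}\subseteq K^{(G\times F)_{\mathcal{O}}\rtimes\mathbb{C}^*}(\mathcal{R}_{G,N})$. The content of the proposition is the reverse inclusion: these elements generate.

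For the reverse inclusion I would argue orbit-closure by orbit-closure. Both quivers in Figure~\ref{fig:twoquivers} have gauge group $G = \mathrm{GL}_2$ (a single gauge node with $V_i = \mathbb{C}^2$), so the dominant coweights of $G$ are indexed by pairs of integers $\lambda = (\lambda_1 \geq \lambda_2)$, and $\Gr_G$ is filtered by the closures $\overline{\Gr}_G^\lambda$. The key structural input is that for $\mathrm{GL}_2$ every dominant coweight is a sum of the minuscule and quasi-minuscule ones: writing $\lambda = (\lambda_1, \lambda_2)$ with $\lambda_1 \geq \lambda_2$, one has $\lambda = \lambda_2(1,1) + (\lambda_1 - \lambda_2)(1,0)$, and $(1,1)$ is central (it acts by the determinant cocharacter, giving an invertible element $D_{i,1}D_{i,2}$), while $(1,0)$ is minuscule. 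So by the support filtration on $K$-theory it suffices to show that a class supported on $\mathcal{R}_{\leq\lambda}$ can be written, modulo classes supported on strictly smaller $\mathcal{R}_{\leq\mu}$, as a product of a dressed minuscule monopole operator for the coweight $(1,0)$ (or its dual $(-1,0)$, or the central one), together with a symmetric polynomial in the $w$'s. Concretely: given a class $c$ supported on $\mathcal{R}_{\leq\lambda}$, restrict it to the open orbit $\mathcal{R}_{\lambda}$; by the localization/cellular-fibration structure of $\Gr_G^\lambda \to$ (point) this restriction is a free module over $K^{\widetilde{G}_{\mathcal{O}}}(\mathrm{pt})$ on the tautological classes, so $c$ agrees up to lower-order terms with a suitable product $E_{i,n}[f]^{a} \cdot (D_{i,1}D_{i,2})^{b} \cdot g(w)$ of the generating operators; subtract and induct downward in $\lambda$. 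This is the standard ``PBW/triangularity'' argument for Coulomb branches; the homology analogue is in~\cite{W19}, and one transports it to $K$-theory using the localization-theorem description of $\mathcal{D}_{q,\mathbf{z}}$ from Section~\ref{sec:TheAbelianCase}. For the Jordan quiver (Figure~\ref{subfig:Jordan}) one must also handle the adjoint-loop arrow, which only modifies the rational factors $\mathcal{P}_{i,\mathcal{I}}$, $\mathcal{Q}_{i,\mathcal{I}}$ and does not affect the filtration argument.

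I expect the main obstacle to be the bookkeeping in the triangularity step: showing that the explicit rational expressions $\mathcal{P}_{i,\mathcal{I}}$, $\mathcal{Q}_{i,\mathcal{I}}$ in Definition~\ref{def:monopole}, once one forms products of $E$'s and $F$'s and pushes forward along the inclusions of orbit closures, really do produce, modulo lower terms, an arbitrary element of the localized algebra $\mathcal{D}_{q,\mathbf{z}}$ that lies in the image of $K^{(G\times F)_{\mathcal{O}}\rtimes\mathbb{C}^*}(\mathcal{R}_{G,N})$ — i.e.\ that there are no ``extra'' invariants in the image beyond those manifestly generated. Concretely one needs to know the leading term (in the $D_{i,r}$-degree filtration) of a product $E_{i,1}[f]E_{i,1}[g]$ versus $E_{i,2}[h]$ and so on; these are governed by the Weyl-symmetrization denominators $\prod_{r\in\mathcal{I}, s\notin\mathcal{I}}(1 - w_{i,s}w_{i,r}^{-1})$, and checking that after symmetrization one lands in $\mathcal{A}_{q,\mathbf{z}}$ and not merely its localization is the delicate point. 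For $\mathrm{GL}_2$ this reduces to a rank-one computation (only $\mathcal{I}$ of size $1$ or $2$ occur), so it is entirely explicit, and I would carry it out by direct comparison of leading terms; the reference~\cite{W19} can be cited for the conceptual framework, with the $K$-theoretic refinements supplied by hand in this rank-one setting.
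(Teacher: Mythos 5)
Your proposal is correct in substance and follows essentially the same route as the paper: the paper's proof simply cites Theorem~4.32 of~\cite{FT19b} for the quiver of Figure~\ref{subfig:typeA} and observes that the Jordan quiver case is proved the same way, the key point being exactly the one you identify, namely that every dominant coweight of $\mathrm{GL}_2$ is a sum of fundamental (minuscule and central) coweights, so the orbit-closure filtration/triangularity argument reduces everything to the dressed minuscule monopole operators. The ``delicate bookkeeping'' you flag is precisely the content of the cited theorem, so in the paper it is handled by reference rather than redone by hand.
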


\begin{proof}
For the quiver illustrated in Figure~\ref{subfig:typeA}, the statement was proved in Theorem~4.32 of~\cite{FT19b}. For the quiver illustrated in Figure~\ref{subfig:Jordan}, the statement can be proved in the same way as this theorem. In fact, the proof simplifies considerably in this example because the ``generalized roots'' considered in the proof are the ordinary roots and ``chambers'' are the usual Weyl~chambers for~$\mathrm{GL}_2$. Consequently, the argument comes down to the fact that any dominant coweight is a sum of fundamental coweights.
\end{proof}

\section{Proofs of the main results}
\label{sec:ProofsOfTheMainResults}

In this section, we formulate and prove our main results relating skein algebras and quantized Coulomb branches.

\subsection{Representations from surfaces}
\label{sec:RepresentationsFromSurfaces}

Recall that a \emph{pair of pants} is a compact surface obtained from~$S^2$ by removing three open disks with embedded disjoint closures. If $S$ is a compact surface with negative Euler characteristic, then a \emph{pants decomposition} of~$S$ is a collection of disjoint simple closed curves on~$S$ such that any component of the complement of these curves in~$S\setminus\partial S$ is the interior of a pair of pants. Let~$\mathcal{P}$ be a pants decomposition of~$S$, and let $\mathcal{B}$ be the set of all loops which are boundary components of~$S$. Then there is a collection~$\{P_i\}$ of pairs of pants such that the surface $S$ is obtained by identifying boundary components of $\coprod_iP_i$ and each boundary component of $P_i$ corresponds to a curve in~$\mathcal{P}\cup\mathcal{B}$. Note that it is possible for two boundary components of a single~$P_i$ to be identified in~$S$.

We will now construct a representation associated to the pants decomposition~$\mathcal{P}$, inspired by the related construction in~\cite{G12}. We start with a two-dimensional vector space $M_\gamma=\mathbb{C}^2$ associated to each curve $\gamma\in\mathcal{P}\cup\mathcal{B}$. If $P_i$ is a pair of pants whose boundary components correspond to the curves $\gamma_{i1}$,~$\gamma_{i2}$,~$\gamma_{i3}\in\mathcal{P}\cup\mathcal{B}$, then we form the tensor product 
\[
M_{P_i}=M_{\gamma_{i1}}\otimes M_{\gamma_{i2}}\otimes M_{\gamma_{i3}}
\]
of the associated vector spaces. We let 
\[
M=\bigoplus_iM_{P_i}
\]
be the direct sum over all pairs of pants.

For each $\gamma\in\mathcal{P}$ let $G_\gamma=\mathrm{SL}_2(\mathbb{C})$, and for each $\gamma\in\mathcal{B}$ let $F_\gamma\subset\mathrm{SL}_2(\mathbb{C})$ be the diagonal torus. There is an action of~$G_\gamma$ on~$M_{P_i}$ where an element of~$G_\gamma$ acts by matrix multiplication on each factor equal to~$M_\gamma$ in the tensor product and acts trivially on all other tensor factors. By taking the direct sum over all~$i$, we get an action of~$G_\gamma$ on~$M$. In the same way, if $\gamma\in\mathcal{B}$, then we get an action of~$F_\gamma$ on~$M_{P_i}$, and taking the direct sum over all~$i$ gives an action of~$F_\gamma$ on~$M$. Thus we see that the groups 
\[
G=\prod_{\gamma\in\mathcal{P}}G_\gamma, \qquad F=\prod_{\gamma\in\mathcal{B}}F_\gamma
\]
act naturally on the vector space~$M$. One easily sees that these actions commute, and so we obtain an action of the product $\widetilde{G}=G\times F$ on~$M$.

We will be interested in examples where this representation $M$ is of \emph{cotangent~type}, that~is, $M=N\oplus N^*$ for some representation~$N$ of~$\widetilde{G}$. To understand how such representations can arise, consider a pair of pants $P_i$ whose boundary components correspond to curves $\gamma_{i1}$,~$\gamma_{i2}$,~$\gamma_{i3}\in\mathcal{P}\cup\mathcal{B}$. If one of these curves, say $\gamma_{i1}$, is a boundary component of~$S$, then we can write 
\[
M_{P_i}\cong(M_{\gamma_{i2}}\otimes M_{\gamma_{i3}})\oplus(M_{\gamma_{i2}}\otimes M_{\gamma_{i3}})
\]
where an element $\diag(t,t^{-1})\in F_{\gamma_{i1}}$ acts on~$M_{P_i}$ by multiplying the first direct summand by $t$ and multiplying the second direct summand by~$t^{-1}$. Let us write $N_{P_i}=M_{\gamma_{i2}}\otimes M_{\gamma_{i3}}$. This is a representation of $G_{\gamma_{i2}}\times G_{\gamma_{i3}}$ or $F_{\gamma_{i2}}\times G_{\gamma_{i3}}$ or $G_{\gamma_{i2}}\times F_{\gamma_{i3}}$ or $F_{\gamma_{i2}}\times F_{\gamma_{i3}}$, depending on whether $\gamma_{i2}$ and~$\gamma_{i3}$ belong to $\mathcal{P}$ or~$\mathcal{B}$. Since the matrix representations of $\mathrm{SL}_2(\mathbb{C})$ and its diagonal subgroup are self-dual, we see that in all cases we have $N_{P_i}\cong N_{P_i}^*$ as representations.

It follows that if each pair of pants~$P_i$ shares at least one boundary component with~$S$, then $M=N\oplus N^*$ where $N=\bigoplus_iN_{P_i}$, and hence the representation~$M$ is of cotangent type. In~particular, when the genus of~$S$ is zero or one, we can get a representation~$M$ of cotangent type by choosing the pants decomposition as in Figure~\ref{fig:pantsdecompositions}.

\begin{figure}[ht]
\begin{subfigure}{0.5\textwidth}
\begin{center}
\begin{tikzpicture}[scale=1.5]
\clip(-2.75,-1.25) rectangle (2.75,1.25);
\draw[black, thin] plot [smooth, tension=1] coordinates { (2.5,-0.125) (2.25,0) (2.5,0.125)};
\draw[black, thin] plot [smooth, tension=1] coordinates { (-2.5,-0.125) (-2.25,0) (-2.5,0.125)};
\draw[black, thin] plot [smooth, tension=1] coordinates { (1.5,0.25) (1.8,0.3) (2.2,0.55) (2.5,0.625)};
\draw[black, thin] plot [smooth, tension=1] coordinates { (1.5,-0.25) (1.8,-0.3) (2.2,-0.55) (2.5,-0.625)};
\draw[black, thin] plot [smooth, tension=1] coordinates { (-1.5,0.25) (-1.8,0.3) (-2.2,0.55) (-2.5,0.625)};
\draw[black, thin] plot [smooth, tension=1] coordinates { (-1.5,-0.25) (-1.8,-0.3) (-2.2,-0.55) (-2.5,-0.625)};
\draw[black, thin] plot [smooth, tension=1] coordinates { (0.5,0.25) (0.7,0.3) (0.75,0.5)};
\draw[black, thin] plot [smooth, tension=1] coordinates { (1.5,0.25) (1.3,0.3) (1.25,0.5)};
\draw[black, thin] plot [smooth, tension=1] coordinates { (-0.5,0.25) (-0.7,0.3) (-0.75,0.5)};
\draw[black, thin] plot [smooth, tension=1] coordinates { (-1.5,0.25) (-1.3,0.3) (-1.25,0.5)};
\draw (-0.4,-0.25) -- (-1.5,-0.25);
\draw (0.4,-0.25) -- (1.5,-0.25);
\draw (-0.4,0.25) -- (-0.5,0.25);
\draw (0.4,0.25) -- (0.5,0.25);
\draw[black, thin] plot [smooth, tension=1] coordinates { (-0.75,0.5) (-0.875,0.45) (-1.125,0.45) (-1.25,0.5)};
\draw[black, thin] plot [smooth, tension=1] coordinates { (-0.75,0.5) (-0.875,0.55) (-1.125,0.55) (-1.25,0.5)};
\draw[black, thin] plot [smooth, tension=1] coordinates { (0.75,0.5) (0.875,0.45) (1.125,0.45) (1.25,0.5)};
\draw[black, thin] plot [smooth, tension=1] coordinates { (0.75,0.5) (0.875,0.55) (1.125,0.55) (1.25,0.5)};
\draw[black, thin] plot [smooth, tension=1] coordinates { (-2.5,0.125) (-2.45,0.2) (-2.45,0.5) (-2.5,0.625)};
\draw[black, thin] plot [smooth, tension=1] coordinates { (-2.5,0.125) (-2.55,0.2) (-2.55,0.5) (-2.5,0.625)};
\draw[black, thin] plot [smooth, tension=1] coordinates { (-2.5,-0.125) (-2.45,-0.2) (-2.45,-0.5) (-2.5,-0.625)};
\draw[black, thin] plot [smooth, tension=1] coordinates { (-2.5,-0.125) (-2.55,-0.2) (-2.55,-0.5) (-2.5,-0.625)};
\draw[black, thin, dotted] plot [smooth, tension=1] coordinates { (-1.5,-0.25) (-1.55,-0.125) (-1.55,0.125) (-1.5,0.25)};
\draw[black, thin] plot [smooth, tension=1] coordinates { (-1.5,-0.25) (-1.45,-0.125) (-1.45,0.125) (-1.5,0.25)};
\draw[black, thin, dotted] plot [smooth, tension=1] coordinates { (-0.5,-0.25) (-0.55,-0.125) (-0.55,0.125) (-0.5,0.25)};
\draw[black, thin] plot [smooth, tension=1] coordinates { (-0.5,-0.25) (-0.45,-0.125) (-0.45,0.125) (-0.5,0.25)};
\draw[black, thin, dotted] plot [smooth, tension=1] coordinates { (0.5,-0.25) (0.45,-0.125) (0.45,0.125) (0.5,0.25)};
\draw[black, thin] plot [smooth, tension=1] coordinates { (0.5,-0.25) (0.55,-0.125) (0.55,0.125) (0.5,0.25)};
\draw[black, thin, dotted] plot [smooth, tension=1] coordinates { (1.5,-0.25) (1.45,-0.125) (1.45,0.125) (1.5,0.25)};
\draw[black, thin] plot [smooth, tension=1] coordinates { (1.5,-0.25) (1.55,-0.125) (1.55,0.125) (1.5,0.25)};
\draw[black, thin, dotted] plot [smooth, tension=1] coordinates { (2.5,0.125) (2.45,0.2) (2.45,0.5) (2.5,0.625)};
\draw[black, thin] plot [smooth, tension=1] coordinates { (2.5,0.125) (2.55,0.2) (2.55,0.5) (2.5,0.625)};
\draw[black, thin, dotted] plot [smooth, tension=1] coordinates { (2.5,-0.125) (2.45,-0.2) (2.45,-0.5) (2.5,-0.625)};
\draw[black, thin] plot [smooth, tension=1] coordinates { (2.5,-0.125) (2.55,-0.2) (2.55,-0.5) (2.5,-0.625)};
\node at (0,0) {$\dots$};
\end{tikzpicture}
\end{center}
\caption{\label{subfig:genus0}}
\end{subfigure}
\begin{subfigure}{0.23\textwidth}
\begin{center}
\begin{tikzpicture}[scale=1.5]
\clip(-1.25,-1.25) rectangle (1.25,1.25);
\begin{scope}
\clip[rotate=60](0.5,0) rectangle (1,0.05);
\draw[black, thin, dotted, rotate=60] (0.75,0) ellipse (0.25 and 0.05);
\end{scope}
\begin{scope}
\clip[rotate=60](0.5,-0.05) rectangle (1,0);
\draw[black, thin, rotate=60] (0.75,0) ellipse (0.25 and 0.05);
\end{scope}
\draw[black, thin, rotate=90] (1.125,0) ellipse (0.05 and 0.25);
\begin{scope}
\clip[rotate=120](0.5,0) rectangle (1,0.05);
\draw[black, thin, dotted, rotate=120] (0.75,0) ellipse (0.25 and 0.05);
\end{scope}
\begin{scope}
\clip[rotate=120](0.5,-0.05) rectangle (1,0);
\draw[black, thin, rotate=120] (0.75,0) ellipse (0.25 and 0.05);
\end{scope}
\draw[black, thin, rotate=150] (1.125,0) ellipse (0.05 and 0.25);
\begin{scope}
\clip[rotate=180](0.5,0) rectangle (1,0.05);
\draw[black, thin, rotate=180] (0.75,0) ellipse (0.25 and 0.05);
\end{scope}
\begin{scope}
\clip[rotate=180](0.5,-0.05) rectangle (1,0);
\draw[black, thin, dotted, rotate=180] (0.75,0) ellipse (0.25 and 0.05);
\end{scope}
\draw[black, thin, rotate=-150] (1.125,0) ellipse (0.05 and 0.25);
\begin{scope}
\clip[rotate=-120](0.5,0) rectangle (1,0.05);
\draw[black, thin, rotate=-120] (0.75,0) ellipse (0.25 and 0.05);
\end{scope}
\begin{scope}
\clip[rotate=-120](0.5,-0.05) rectangle (1,0);
\draw[black, thin, dotted, rotate=-120] (0.75,0) ellipse (0.25 and 0.05);
\end{scope}
\begin{scope}
\clip[rotate=-90](1.125,-0.25) rectangle (1.2,0.25);
\draw[black, thin, rotate=-90] (1.125,0) ellipse (0.05 and 0.25);
\end{scope}
\begin{scope}
\clip[rotate=-90](1.05,-0.25) rectangle (1.125,0.25);
\draw[black, thin, dotted, rotate=-90] (1.125,0) ellipse (0.05 and 0.25);
\end{scope}
\begin{scope}
\clip[rotate=-60](0.5,0) rectangle (1,0.05);
\draw[black, thin, rotate=-60] (0.75,0) ellipse (0.25 and 0.05);
\end{scope}
\begin{scope}
\clip[rotate=-60](0.5,-0.05) rectangle (1,0);
\draw[black, thin, dotted, rotate=-60] (0.75,0) ellipse (0.25 and 0.05);
\end{scope}
\draw [thin,domain=30:330] plot ({0.5*cos(\x)}, {0.5*sin(\x)});
\draw[black, thin] plot [smooth, tension=1] coordinates { (-1,0) (-0.97,0.26) (-1.10,0.35)};
\draw[black, thin] plot [smooth, tension=1] coordinates { (-1,0) (-0.97,-0.26) (-1.10,-0.35)};
\draw[black, thin] plot [smooth, tension=1] coordinates { (-0.5,0.87) (-0.71,0.71) (-0.85,0.78)};
\draw[black, thin] plot [smooth, tension=1] coordinates { (-0.5,-0.87) (-0.71,-0.71) (-0.85,-0.78)};
\draw[black, thin] plot [smooth, tension=1] coordinates { (0.5,0.87) (0.71,0.71)};
\draw[black, thin] plot [smooth, tension=1] coordinates { (0.5,-0.87) (0.71,-0.71)};
\draw[black, thin] plot [smooth, tension=1] coordinates { (-0.5,0.87) (-0.26,0.97) (-0.25,1.125)};
\draw[black, thin] plot [smooth, tension=1] coordinates { (0.5,0.87) (0.26,0.97) (0.25,1.125)};
\draw[black, thin] plot [smooth, tension=1] coordinates { (-0.5,-0.87) (-0.26,-0.97) (-0.25,-1.125)};
\draw[black, thin] plot [smooth, tension=1] coordinates { (0.5,-0.87) (0.26,-0.97) (0.25,-1.125)};
\node at (0.75,0) {$\vdots$};
\end{tikzpicture}
\end{center}
\caption{\label{subfig:genus1}}
\end{subfigure}
\caption{Pants decompositions in genus zero and one.\label{fig:pantsdecompositions}}
\end{figure}
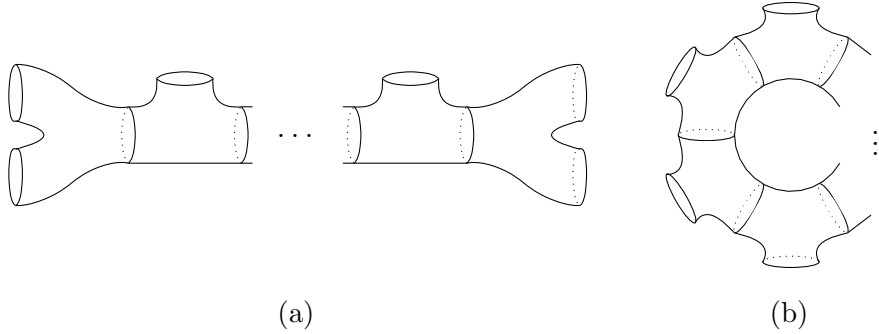

\subsection{The three-holed sphere}

The simplest surface we consider is the three-holed sphere~$S_{0,3}$. This surface has a unique (empty) pants decomposition, which by the above construction yields a group $\widetilde{G}$ and representation~$N$.

\begin{theorem}
\label{thm:S03main}
Let $(\widetilde{G},N)$ be the group and representation associated to the three-holed sphere~$S_{0,3}$. Then there is a $\mathbb{C}$-algebra isomorphism 
\[
\Sk_{A,\boldsymbol{\lambda}}(S_{0,3})\cong K^{\widetilde{G}_{\mathcal{O}}\rtimes\mathbb{C}^*}(\mathcal{R}_{G,N}).
\]
\end{theorem}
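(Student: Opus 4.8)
The plan is to show that each of the two algebras is a Laurent polynomial ring in four commuting variables over $\mathbb{C}$ and then to match up generators.

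First I would compute $\Sk_{A,\boldsymbol{\lambda}}(S_{0,3})$. Every essential simple closed curve on $S_{0,3}$ is isotopic to one of the three peripheral curves $\gamma_1,\gamma_2,\gamma_3$, and these are pairwise disjoint, so $\Sk_A(S_{0,3})$ is the commutative polynomial algebra $\mathbb{C}[A^{\pm1}][\gamma_1,\gamma_2,\gamma_3]$, with the monomials $\gamma_1^a\gamma_2^b\gamma_3^c$ ($a,b,c\geq0$) forming the usual multicurve basis. Passing to the relative skein algebra imposes, for each boundary component, the relation of Figure~\ref{fig:puncturerelation}, i.e.\ $\gamma_i=-(\lambda_i+\lambda_i^{-1})$, which simply eliminates the three generators; hence $\Sk_{A,\boldsymbol{\lambda}}(S_{0,3})\cong\mathbb{C}[A^{\pm1},\lambda_1^{\pm1},\lambda_2^{\pm1},\lambda_3^{\pm1}]$ as a $\mathbb{C}$-algebra.

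Next I would compute the quantized Coulomb branch. The decisive simplification is that the (empty) pants decomposition of $S_{0,3}$ produces a \emph{trivial} gauge group $G=\{1\}$, together with the flavor torus $F=(T_{\mathrm{SL}_2})^3$ and some four-dimensional representation $N$ of $\widetilde G=F$ whose precise form turns out to be irrelevant. Since $G$ is trivial, $\Gr_G$ is a point and $\mathcal{R}_{G,N}=\mathcal{T}_{G,N}=N_{\mathcal{O}}$; there is a single dominant coweight, so the limit in Definition~\ref{def:KtheoryR} collapses to the single term $K^{\widetilde G_i\rtimes\mathbb{C}^*}(N\otimes(\mathcal{O}/z^d\mathcal{O}))$ for $i,d\gg0$. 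As $N\otimes(\mathcal{O}/z^d\mathcal{O})$ is a linear representation of $\widetilde G_i\rtimes\mathbb{C}^*$, homotopy invariance of equivariant $K$-theory identifies this with $K^{\widetilde G_i\rtimes\mathbb{C}^*}(\mathrm{pt})$; and since the $K$-theory of a point depends only on the reductive quotient of the group, which here is $\widetilde G\times\mathbb{C}^*$ (loop rotation acts trivially on constant loops), this equals $\mathbb{C}[X^*(\widetilde G\times\mathbb{C}^*)]\cong\mathbb{C}[q^{\pm\frac{1}{2}},\tilde t_1^{\pm1},\tilde t_2^{\pm1},\tilde t_3^{\pm1}]$, where $\tilde t_i$ generates $X^*(F_{\gamma_i})$. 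Because $\Gr_G$ is a point, the convolution product of Braverman, Finkelberg, and Nakajima reduces to ordinary multiplication in this representation ring, so $K^{\widetilde G_{\mathcal{O}}\rtimes\mathbb{C}^*}(\mathcal{R}_{G,N})$ is precisely the commutative Laurent polynomial ring above.

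Finally I would write down the isomorphism: $A\mapsto q^{-\frac{1}{2}}$ (so that $q$ is identified with $A^{-2}$) and $\lambda_i\mapsto\tilde t_i$. I~would check this is the natural choice by noting that the peripheral curve $\gamma_i$, which equals $-(\lambda_i+\lambda_i^{-1})$ in $\Sk_{A,\boldsymbol{\lambda}}(S_{0,3})$, is sent to $-(\tilde t_i+\tilde t_i^{-1})$, matching the class of $-\Tr_{\gamma_i}$ under the identification of boundary holonomies with flavor symmetries (at worst a rescaling of the $\tilde t_i$ is needed to fix signs). I do not expect any serious obstacle: the only points that require care are on the Coulomb side — verifying that the limit in Definition~\ref{def:KtheoryR} really does collapse for the ind-scheme $N_{\mathcal{O}}$, so that one may pass to the reductive quotient and invoke equivariant homotopy invariance, and confirming that the convolution product degenerates to the naive product when $\Gr_G$ is a point. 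The substance of the theorem is thus just these two elementary computations.
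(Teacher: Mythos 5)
Your proposal is correct and follows essentially the same route as the paper: both sides identify $\Sk_{A,\boldsymbol{\lambda}}(S_{0,3})$ with $\mathbb{C}[A^{\pm1},\lambda_1^{\pm1},\lambda_2^{\pm1},\lambda_3^{\pm1}]$ because every simple closed curve is peripheral, and both compute the Coulomb branch side by observing that $G$ is trivial, so $\mathcal{R}_{G,N}\cong N_{\mathcal{O}}$ and the equivariant $K$-theory reduces to $K^{(\mathbb{C}^*)^3\rtimes\mathbb{C}^*}(\mathrm{pt})\cong\mathbb{C}[q^{\pm\frac{1}{2}},z_1^{\pm1},z_2^{\pm1},z_3^{\pm1}]$ (the paper invokes the Thom isomorphism of Chriss--Ginzburg where you invoke equivariant homotopy invariance, which is the same step).
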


\begin{proof}
By the construction of Section~\ref{sec:RepresentationsFromSurfaces}, we see that the gauge group is the trivial group $G\cong\{1\}$, and the representation is $N=\mathbb{C}^2\otimes\mathbb{C}^2\cong\mathbb{C}^4$. Therefore the variety of triples is $\mathcal{R}_{G,N}\cong N_\mathcal{O}\cong\mathcal{O}^4$. The flavor symmetry group is $F\cong(\mathbb{C}^*)^3$, so by Definition~\ref{def:KtheoryR}, we have 
\[
K^{\widetilde{G}_{\mathcal{O}}\rtimes\mathbb{C}^*}(\mathcal{R}_{G,N})\cong K^{(\mathbb{C}^*)^3\rtimes\mathbb{C}^*}(\mathbb{C}^{4d})
\]
for $d\gg0$. We can view $\mathbb{C}^{4d}$ as a $(\mathbb{C}^*)^3\rtimes\mathbb{C}^*$-equivariant vector bundle over a point~$\mathrm{pt}$. By the Thom~isomorphism theorem (\cite{CG97}, Theorem~5.4.17), we get $\mathbb{C}$-algebra isomorphisms 
\[
K^{\widetilde{G}_{\mathcal{O}}\rtimes\mathbb{C}^*}(\mathcal{R}_{G,N})\cong K^{(\mathbb{C}^*)^3\rtimes\mathbb{C}^*}(\mathrm{pt})\cong\mathbb{C}\left[q^{\pm\frac{1}{2}},z_1^{\pm1},z_2^{\pm1},z_3^{\pm1}\right].
\]
On the other hand, let~$\delta_1$,~$\delta_2$,~$\delta_3$ be curves on~$S_{0,3}$ homotopic to the three boundary components. They define elements of the relative skein algebra~$\Sk_{A,\boldsymbol{\lambda}}(S_{0,3})$. Since any simple closed curve in~$S_{0,3}$ is homotopic to one of the~$\delta_i$, we see that the~$\delta_i$ generate $\Sk_{A,\boldsymbol{\lambda}}(S_{0,3})$ as a $\mathbb{C}[A^{\pm1},\lambda_1^{\pm1},\lambda_2^{\pm1},\lambda_3^{\pm1}]$-algebra. In the relative skein algebra, we have the relations $\delta_i=-(\lambda_i+\lambda_i^{-1})$ for~$i=1,2,3$, so we see that there is an isomorphism of $\mathbb{C}$-algebras 
\[
\Sk_{A,\boldsymbol{\lambda}}(S_{0,3})\cong\mathbb{C}[A^{\pm1},\lambda_1^{\pm1},\lambda_2^{\pm1},\lambda_3^{\pm1}].
\]
The theorem follows.
\end{proof}

\subsection{The four-holed sphere}
\label{sec:TheFourHoledSphere}

To understand the Coulomb branch of the representation associated to a four-holed sphere, we consider the following quiver: 
\begin{equation}
\label{eqn:S04quiver}
\xymatrix{
*+[F]{2} \save +<3mm,0mm>\ar[rr]+<-3mm,0mm>\restore & & *+[o][F-]{2} & & *+[F]{2} \save +<-3mm,0mm>\ar[ll]+<3mm,0mm>\restore
}
\end{equation}
By applying the construction of Section~\ref{sec:MonopoleOperators} to this quiver, we get $\mathbb{C}[q^{\pm\frac{1}{2}},z_{k,l}^{\pm1},z_a^{\pm1},z_b^{\pm1}]$-algebras~$\mathcal{D}_{q,\mathbf{z}}^0$ and~$\mathcal{D}_{q,\mathbf{z}}$ where $k\in\{1,2\}$ indexes the framing nodes of the quiver, $a$ and~$b$ are the two arrows of the quiver, and $l\in\{1,2\}$. These algebras are generated by variables $w_r$, $D_r$, and their inverses where $r\in\{1,2\}$. Here we write only the $r$ index since the quiver has only one gauge node. There are $\mathbb{C}^*$-actions on~$\mathcal{D}_{q,\mathbf{z}}^0$ and~$\mathcal{D}_{q,\mathbf{z}}$ that simultaneously rescale the~$D_r$, and we will write $(\mathcal{D}_{q,\mathbf{z}}^0)^{\mathbb{C}^*}$ and~$\mathcal{D}_{q,\mathbf{z}}^{\mathbb{C}^*}$ for the corresponding invariant subalgebras.

\begin{lemma}
\label{lem:relateambientringsS04}
Define the operator $\varpi\in\End\mathbb{C}_{q,\mathbf{t}}(X)$ by $(\varpi f)(X)=f(qX)$ for $f\in\End\mathbb{C}_{q,\mathbf{t}}(X)$. Then the assignments 
\begin{gather*}
q^{\frac{1}{2}}\mapsto q^{\frac{1}{2}}, \quad z_{1,1}\mapsto t_4, \quad z_{1,2}\mapsto t_4^{-1}, \quad z_{2,1}\mapsto t_1, \quad z_{2,2}\mapsto t_1^{-1}, \quad z_a\mapsto t_3, \quad z_b\mapsto t_2, \\
w_1\mapsto X, \quad w_2\mapsto X^{-1}, \quad D_1D_2^{-1}\mapsto q^{-4}X^{-4}\varpi^2
\end{gather*}
determine a well defined injective $\mathbb{C}$-algebra homomorphism 
\[
\mathcal{D}_{q,\mathbf{z}}^{\mathbb{C}^*}/(z_{k,1}z_{k,2}-1,w_1w_2-1)\hookrightarrow\End\mathbb{C}_{q,\mathbf{t}}(X).
\]
\end{lemma}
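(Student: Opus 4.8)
The plan is to recognize the source as a localized skew Laurent polynomial ring over a commutative domain, to write down the proposed homomorphism on a normal form, and then to deduce injectivity from linear independence of the powers of $\varpi$.

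Write $B=\mathcal{D}_{q,\mathbf{z}}^{\mathbb{C}^*}/(z_{k,1}z_{k,2}-1,w_1w_2-1)$. First I would make $B$ explicit. For the quiver~\eqref{eqn:S04quiver} the algebra $\mathcal{D}_{q,\mathbf{z}}^0$ is free over $R_0=\mathbb{C}[q^{\pm\frac{1}{2}},z_{k,l}^{\pm1},z_a^{\pm1},z_b^{\pm1}]$ on the monomials $w_1^{a_1}w_2^{a_2}D_1^{b_1}D_2^{b_2}$, and the $\mathbb{C}^*$ that simultaneously rescales $D_1$ and $D_2$ acts on such a monomial with weight $b_1+b_2$. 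Hence $(\mathcal{D}_{q,\mathbf{z}}^0)^{\mathbb{C}^*}$ is spanned by the monomials with $b_2=-b_1$; writing $\mathfrak{d}:=D_1D_2^{-1}$, it is the skew Laurent ring over $R_0[w_1^{\pm1},w_2^{\pm1}]$ generated by $\mathfrak{d}^{\pm1}$ with $\mathfrak{d}w_1=q^2w_1\mathfrak{d}$ and $\mathfrak{d}w_2=q^{-2}w_2\mathfrak{d}$. The multiplicative set used to define $\mathcal{D}_{q,\mathbf{z}}$ lies in the commutative part and is normalized by $\mathfrak{d}$ (for instance $\mathfrak{d}(w_1-q^\mu w_2)=q^2(w_1-q^{\mu-4}w_2)\mathfrak{d}$), so localization commutes with taking $\mathbb{C}^*$-invariants; and $z_{k,1}z_{k,2}-1$ and $w_1w_2-1$ are central in $\mathcal{D}_{q,\mathbf{z}}^{\mathbb{C}^*}$ and are fixed by conjugation by $\mathfrak{d}$, so passing to the quotient by the two-sided ideal they generate only quotients the coefficient ring. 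Setting $z_{1,1}=t_4$, $z_{2,1}=t_1$, $z_a=t_3$, $z_b=t_2$ and $w:=w_1=w_2^{-1}$, this identifies $B$ with the skew Laurent ring over a commutative domain $C$ generated by $\mathfrak{d}^{\pm1}$ with $\mathfrak{d}w=q^2w\mathfrak{d}$ ($\mathfrak{d}$ commuting with $q^{\frac{1}{2}}$ and the $t_i$), where $C$ is obtained from $\mathbb{C}[q^{\pm\frac{1}{2}},t_1^{\pm1},\dots,t_4^{\pm1},w^{\pm1}]$ by inverting $w-q^\mu w^{-1}$ ($\mu\in\mathbb{Z}$) and $1-q^\mu$ ($\mu\neq0$). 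Getting this bookkeeping right — the order of taking invariants, localizing, and passing to the quotient, and the fact that these commute — is the fussiest part of the proof.

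Next I would check well-definedness. Writing $X$ also for multiplication by $X$, one has $\varpi X=qX\varpi$ in $\End\mathbb{C}_{q,\mathbf{t}}(X)$, and hence $\varpi^2X=q^2X\varpi^2$, so the multiplications by elements of $\mathbb{C}_{q,\mathbf{t}}(X)$ together with $\varpi^{\pm1}$ generate a skew Laurent subring of $\End\mathbb{C}_{q,\mathbf{t}}(X)$. Put $\mathbf{d}=q^{-4}X^{-4}\varpi^2$. A one-line computation gives $\mathbf{d}X=q^{-2}X^{-3}\varpi^2=q^2X\mathbf{d}$, which is precisely the relation $\mathfrak{d}w=q^2w\mathfrak{d}$; moreover $\mathbf{d}$ commutes with $q^{\frac{1}{2}}$ and the $t_i$, and the localizing elements $w-q^\mu w^{-1}$, $1-q^\mu$ map to multiplication by the nonzero rational function $X-q^\mu X^{-1}$ and to the nonzero scalar $1-q^\mu$, which are invertible in $\End\mathbb{C}_{q,\mathbf{t}}(X)$ since $q$ is not a root of unity. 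By the universal property of a localized skew Laurent ring, the stated assignments therefore extend uniquely to a $\mathbb{C}$-algebra homomorphism $\Phi\colon B\to\End\mathbb{C}_{q,\mathbf{t}}(X)$.

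Finally, injectivity, which is the substantive step. Every element of $B$ is uniquely of the form $\sum_n c_n\mathfrak{d}^n$ with $c_n\in C$ almost all zero, and $\Phi$ sends it to $\sum_n\Phi(c_n)\mathbf{d}^n$. Moving all factors of $\varpi$ to the right shows $\mathbf{d}^n=u_n\varpi^{2n}$ with $u_n$ a unit of $\mathbb{C}_{q,\mathbf{t}}(X)$ (a monomial in $q$ and $X$). The restriction $\Phi|_C$ is injective: the assignment $w\mapsto X$ embeds $\mathbb{C}[q^{\pm\frac{1}{2}},t_i^{\pm1},w^{\pm1}]$ into $\mathbb{C}_{q,\mathbf{t}}(X)$ because $X$ is transcendental over $\mathbb{C}_{q,\mathbf{t}}$, and it carries the localizing elements to nonzero field elements, so it extends to an embedding of $C$. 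Furthermore the operators $\varpi^m$ ($m\in\mathbb{Z}$) are left-linearly independent over $\mathbb{C}_{q,\mathbf{t}}(X)$: evaluating a relation $\sum_m g_m\varpi^m=0$ on $1,X,X^2,\dots$ produces a Vandermonde system in the pairwise distinct values $q^m$, forcing all $g_m=0$. Consequently $\sum_n\Phi(c_n)u_n\varpi^{2n}=0$ implies $\Phi(c_n)u_n=0$, hence $c_n=0$ for all $n$; thus $\Phi$ is injective, which completes the argument.
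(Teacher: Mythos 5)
Your proof is correct and follows essentially the same route as the paper's: define the map on the invariant polynomial subalgebra, check the $q$-commutation relation $\mathbf{d}X=q^2X\mathbf{d}$, observe that the quotient relations map to zero, prove injectivity from a normal form in powers of $D_1D_2^{-1}$, and pass to the localization. The only difference is that where the paper simply asserts that the images of $q^{\frac12}$, the $z$'s, $w_1$, and $D_1D_2^{-1}$ "satisfy no relations," you substantiate this via the skew-Laurent normal form and the Vandermonde argument for linear independence of the $\varpi^m$ — a welcome elaboration of the same idea.
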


\begin{proof}
One can check that these assignments map the commutation relations in~$(\mathcal{D}_{q,\mathbf{z}}^0)^{\mathbb{C}^*}$ to relations in $\End\mathbb{C}_{q,\mathbf{t}}(X)$ and therefore provide a well defined $\mathbb{C}$-algebra homomorphism $(\mathcal{D}_{q,\mathbf{z}}^0)^{\mathbb{C}^*}\rightarrow\End\mathbb{C}_{q,\mathbf{t}}(X)$. Furthermore, the elements $z_{k,1}z_{k,2}-1$ and $w_1w_2-1$ map to zero, so this induces a well defined $\mathbb{C}$-algebra homomorphism 
\begin{equation}
\label{eqn:relateambientringsS04}
(\mathcal{D}_{q,\mathbf{z}}^0)^{\mathbb{C}^*}/(z_{k,1}z_{k,2}-1,w_1w_2-1)\rightarrow\End\mathbb{C}_{q,\mathbf{t}}(X).
\end{equation}
Suppose $\bar{P}$ lies in the kernel of this map. This $\bar{P}$ is the class of some $P\in(\mathcal{D}_{q,\mathbf{z}}^0)^{\mathbb{C}^*}$, which we can take to be a Laurent polynomial in $q^{\frac{1}{2}}$, $z_{1,1}$, $z_{2,1}$, $z_a$, $z_b$, $w_1$, and $D_1D_2^{-1}$. The images of these elements under the map~\eqref{eqn:relateambientringsS04} satisfy no relations, so we must have $P=0$ in~$(\mathcal{D}_{q,\mathbf{z}}^0)^{\mathbb{C}^*}$. Hence the map~\eqref{eqn:relateambientringsS04} is injective. It induces a $\mathbb{C}$-algebra homomorphism on the localization $\mathcal{D}_{q,\mathbf{z}}^{\mathbb{C}^*}/(z_{k,1}z_{k,2}-1,w_1w_2-1)$. Since a fraction maps to zero if and only if its numerator maps to zero, the map on the localization is also injective.
\end{proof}

Recall that in~Section~\ref{sec:GeneratingTheCoulombBranch} we defined $\mathcal{A}_{q,\mathbf{z}}$ to be the algebra generated by all minuscule monopole operators and all symmetric polynomials in the~$w_r$. Since our quiver has only one gauge node, we use only one index in the notation for the minuscule monopole operators. Thus, these operators will be written as $E_n[f]$, $F_n[f]$ for an integer $n\in\{1,2\}$ and a symmetric Laurent polynomial~$f$ in $n$ variables.

\begin{lemma}
\label{lem:explicitembeddingS04}
There exists an injective $\mathbb{C}$-algebra homomorphism 
\[
\Sk_{A,\boldsymbol{\lambda}}(S_{0,4})\hookrightarrow\mathcal{A}_{q,\mathbf{z}}^{\mathbb{C}^*}/(z_{k,1}z_{k,2}-1,w_1w_2-1)
\]
mapping $A\mapsto q^{-\frac{1}{2}}$, $\lambda_1\mapsto-z_{2,1}$, $\lambda_2\mapsto z_b$, $\lambda_3\mapsto z_a$, $\lambda_4\mapsto-z_{1,1}$, and 
\begin{equation}
\label{eqn:assignmentsS04}
\begin{split}
\alpha &\mapsto w_1+w_2, \\
\beta &\mapsto q^4z_a^{-1}z_b^{-1}\cdot F_1[x^2]E_1[1]+C_\beta, \\
\gamma &\mapsto q^3z_a^{-1}z_b^{-1}\cdot F_1[x]E_1[1]+C_\gamma
\end{split}
\end{equation}
for some $C_\beta$,~$C_\gamma\in\mathbb{C}[q^{\pm\frac{1}{2}},z_{k,l}^{\pm1},z_a^{\pm1},z_b^{\pm1},w_r]$. If we write $\gamma_n$ for the element defined in Figure~\ref{fig:S04curves}, then $\gamma_n\mapsto q^{4-n}z_a^{-1}z_b^{-1}\cdot F_1[x^{2-n}]E_1[1]+C_n$ for some $C_n\in\mathbb{C}[q^{\pm\frac{1}{2}},z_{k,l}^{\pm1},z_a^{\pm1},z_b^{\pm1},w_r]$.
\end{lemma}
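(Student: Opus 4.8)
The plan is to realize the map as a composite through the polynomial representation. By Proposition~\ref{prop:polyrepS04} there is an injective $\mathbb{C}$-algebra homomorphism $\phi\colon\Sk_{A,\boldsymbol{\lambda}}(S_{0,4})\hookrightarrow\End\mathbb{C}_{q,\mathbf{t}}[X^{\pm1}]^{\mathbb{Z}_2}$, and by Lemma~\ref{lem:relateambientringsS04} an injective $\mathbb{C}$-algebra homomorphism $\psi\colon\mathcal{D}_{q,\mathbf{z}}^{\mathbb{C}^*}/(z_{k,1}z_{k,2}-1,w_1w_2-1)\hookrightarrow\End\mathbb{C}_{q,\mathbf{t}}(X)$, which restricts to the subalgebra $\mathcal{A}_{q,\mathbf{z}}^{\mathbb{C}^*}/(z_{k,1}z_{k,2}-1,w_1w_2-1)$. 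The operators in the image of $\phi$ are built from multiplication by rational functions and the $q$-shift $\tau=\varpi^2$, so $\End\mathbb{C}_{q,\mathbf{t}}[X^{\pm1}]^{\mathbb{Z}_2}$ may be viewed inside $\End\mathbb{C}_{q,\mathbf{t}}(X)$, placing both images in one ambient ring. The task is then to show $\phi\big(\Sk_{A,\boldsymbol{\lambda}}(S_{0,4})\big)$ lies in $\psi\big(\mathcal{A}_{q,\mathbf{z}}^{\mathbb{C}^*}/(z_{k,1}z_{k,2}-1,w_1w_2-1)\big)$; granting this, $\iota:=\psi^{-1}\circ\phi$ is the desired injective homomorphism and its values on generators give the stated formulas.

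By Proposition~\ref{prop:skeinS04}, $\Sk_{A,\boldsymbol{\lambda}}(S_{0,4})$ is generated over $\mathbb{C}[A^{\pm1},\lambda_1^{\pm1},\dots,\lambda_4^{\pm1}]$ by $\alpha,\beta,\gamma$ and the $\delta_i$; in the relative skein algebra $\delta_i=-(\lambda_i+\lambda_i^{-1})$ and $\phi$ sends $A$ and the $\lambda_i$ to scalars in $\mathbb{C}[q^{\pm1/2},z_{k,l}^{\pm1},z_a^{\pm1},z_b^{\pm1}]$ (matching the asserted values after comparison with Proposition~\ref{prop:polyrepS04}), so it suffices to treat $\alpha$ and the curves $\gamma_n$ of Figure~\ref{fig:S04curves}, with $\beta=\gamma_0$, $\gamma=\gamma_1$. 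One has $\phi(\alpha)=X+X^{-1}=\psi(w_1+w_2)$ immediately. For the $\gamma_n$, I~would carry out the central computation: expand the minuscule monopole operators $E_1[1]$ and $F_1[x^{2-n}]$ for the quiver~\eqref{eqn:S04quiver} via Definition~\ref{def:monopole} --- the quiver has a single gauge node and no arrows between gauge nodes, so the factors $\mathcal{P}_{1,\mathcal{I}}$ and $\mathcal{Q}_{1,\mathcal{I}}$ reduce to single terms $(1-w_s/w_r)^{-1}$ --- then form the degree-zero product $F_1[x^{2-n}]E_1[1]$ and, using $D_rw_s=q^{2\delta_{rs}}w_sD_r$ and $D_1D_2=D_2D_1$, bring it to the form (rational function of $w$)$\,D_1^{-1}D_2\,+\,$(rational function of $w$)$\,D_2^{-1}D_1\,+\,$(rational function of $w$), since $D_r^{-1}D_r=1$. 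Applying $\psi$ --- with $w_1\mapsto X$, $w_2\mapsto X^{-1}$, and $D_1^{-1}D_2=(D_1D_2^{-1})^{-1}$, $D_2^{-1}D_1=D_1D_2^{-1}$ sent to the explicit $q$-shift operators read off from $D_1D_2^{-1}\mapsto q^{-4}X^{-4}\varpi^2$ --- yields an operator of the shape $P_n(X)\tau+\widetilde P_n(X)\tau^{-1}+R_n(X)$ with $P_n,\widetilde P_n,R_n$ rational in $X$.

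The crux is to check that after scaling by $q^{4-n}z_a^{-1}z_b^{-1}$ one has exactly $P_n(X)=q^nX^nU(X)$ and $\widetilde P_n(X)=q^nX^{-n}U(X^{-1})$ for the rational function $U$ of Proposition~\ref{prop:polyrepS04}. This amounts to identifying the four factors $(1-qz_{s(c),l}z_cw_r^{-1})$ coming from $F_1$, after the substitution $w_1\mapsto X$, $w_2\mapsto X^{-1}$, with the numerator $(1-qt_3t_4X)(1-q\tfrac{t_3}{t_4}X)(1-qt_1t_2X)(1-q\tfrac{t_2}{t_1}X)$ of $U$ --- using the identifications $z_{1,1}=t_4$, $z_{2,1}=t_1$, $z_a=t_3$, $z_b=t_2$ and the fact that arrow $a$ meets the framing node carrying $t_4^{\pm1}$ --- together with identifying the $(1-w_s/w_r)^{-1}$-denominators, after the $q$-shift induced by $D_1^{-1}$, with $(1-X^2)(1-q^2X^2)$. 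The careful bookkeeping of powers of $q$ and $X$ here is where the real work lies, and I expect this to be the main obstacle.

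Granting this, Lemma~\ref{lem:imageS04curves} shows that $\phi(\gamma_n)$ has the same $\tau$- and $\tau^{-1}$-components, so $D_n:=\psi\big(q^{4-n}z_a^{-1}z_b^{-1}F_1[x^{2-n}]E_1[1]\big)-\phi(\gamma_n)$ is multiplication by a rational function $g_n(X)$. Writing both sides as (the difference operator $q^nX^nU(X)(\tau-1)+q^nX^{-n}U(X^{-1})(\tau^{-1}-1)$, which preserves $\mathbb{C}_{q,\mathbf{t}}[X^{\pm1}]^{\mathbb{Z}_2}$ since it equals $\phi(\gamma_n)-f_n(X)$) plus a multiplication operator, one sees that $g_n(X)=R_n(X)+q^nX^nU(X)+q^nX^{-n}U(X^{-1})-f_n(X)$; a direct residue check shows the poles at $X^2\in\{1,q^2,q^{-2}\}$ cancel in pairs, so $g_n(X)$ is a Laurent polynomial, invariant under $X\mapsto X^{-1}$, with coefficients in $\mathbb{C}[q^{\pm1/2},z_{k,l}^{\pm1},z_a^{\pm1},z_b^{\pm1}]$ (using the corresponding property of $f_n(X)$ from Lemma~\ref{lem:imageS04curves}). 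Finally $\psi$ restricts to an isomorphism $\mathbb{C}[q^{\pm1/2},z_{k,l}^{\pm1},z_a^{\pm1},z_b^{\pm1}][w_1,w_2]^{S_2}/(w_1w_2-1)\xrightarrow{\ \sim\ }\mathbb{C}[q^{\pm1/2},z_{k,l}^{\pm1},z_a^{\pm1},z_b^{\pm1}][X^{\pm1}]^{\mathbb{Z}_2}$ via $w_1+w_2\mapsto X+X^{-1}$, so there is a symmetric polynomial $C_n$ in the $w_r$ over that ring with $\psi(C_n)=-g_n(X)$; then $\phi(\gamma_n)=\psi\big(q^{4-n}z_a^{-1}z_b^{-1}F_1[x^{2-n}]E_1[1]+C_n\big)$, and setting $C_\beta=C_0$, $C_\gamma=C_1$ gives all the stated formulas, injectivity of $\iota$ being inherited from that of $\phi$ and $\psi$. (Alternatively, the general-$n$ identity follows from the cases $n\in\{0,1\}$ via the recursion $\alpha\gamma_m=A^2\gamma_{m+1}+A^{-2}\gamma_{m-1}+C_m$ of Lemma~\ref{lem:imageS04curves} once the parallel recursion for the operators $F_1[x^{2-n}]E_1[1]$ is verified in $\mathcal{A}_{q,\mathbf{z}}$.)
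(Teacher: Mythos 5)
Your proposal follows essentially the same route as the paper's proof: it realizes the map as $\psi^{-1}\circ\phi$ where $\phi$ is the polynomial representation of Proposition~\ref{prop:polyrepS04} and $\psi$ is the embedding of Lemma~\ref{lem:relateambientringsS04}, verifies that the monopole-operator expressions map to the $\tau^{\pm1}$-difference parts of the operators in~\eqref{eqn:polyrepS04} and Lemma~\ref{lem:imageS04curves}, and absorbs the remaining multiplication operator into the constants $C_\beta$, $C_\gamma$, $C_n$. The paper compresses the monopole-operator computation into ``a straightforward calculation,'' whereas you spell out the bookkeeping and the integrality of the remainder term; the logic is the same.
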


\begin{proof}
Let $C_\beta$ and $C_\gamma$ be the unique elements of $\mathbb{C}[q^{\pm\frac{1}{2}},z_{k,l}^{\pm1},z_a^{\pm1},z_b^{\pm1},w_r]$ whose images under the embedding of Lemma~\ref{lem:relateambientringsS04} are the expressions $f_y(X)$ and $f_z(X)$, respectively, from Proposition~\ref{prop:polyrepS04}. We write $\mathcal{O}_1$,~$\mathcal{O}_2$,~$\mathcal{O}_3\in\mathcal{D}_{q,\mathbf{z}}^{\mathbb{C}^*}/(z_{k,1}z_{k,2}-1,w_1w_2-1)$ for the operators defined by the formulas on the right hand side of~\eqref{eqn:assignmentsS04} in order, and write $\mathcal{L}\subset\mathcal{D}_{q,\mathbf{z}}^{\mathbb{C}^*}/(z_{k,1}z_{k,2}-1,w_1w_2-1)$ for the $\mathbb{C}[q^{\pm\frac{1}{2}},z_{k,l}^{\pm1},z_a^{\pm1},z_b^{\pm1}]$-subalgebra generated by these operators. Similarly, we write $\mathcal{O}_\alpha$,~$\mathcal{O}_\beta$,~$\mathcal{O}_\gamma\in\End\mathbb{C}_{q,\mathbf{t}}(X)$ for the operators defined by the formulas on the right hand side of~\eqref{eqn:polyrepS04} in order, and write $\mathcal{M}\subset\End\mathbb{C}_{q,\mathbf{t}}(X)$ for the $\mathbb{C}_{q,\mathbf{t}}$-subalgebra generated by these operators. Now, if 
\[
\iota:\mathcal{D}_{q,\mathbf{z}}^{\mathbb{C}^*}/(z_{k,1}z_{k,2}-1,w_1w_2-1)\rightarrow\End\mathbb{C}_{q,\mathbf{t}}(X)
\]
is the embedding of Lemma~\ref{lem:relateambientringsS04}, then a straightforward calculation using Lemma~\ref{lem:relateambientringsS04} and the definition of the minuscule monopole operators shows that 
\[
\iota(\mathcal{O}_1)=\mathcal{O}_\alpha, \quad \iota(\mathcal{O}_2)=\mathcal{O}_\beta, \quad \iota(\mathcal{O}_3)=\mathcal{O}_\gamma,
\]
so we can view $\iota$ as an embedding $\iota:\mathcal{L}\hookrightarrow\mathcal{M}$. Since $\mathcal{M}$ preserves $\mathbb{C}_{q,\mathbf{t}}[X^{\pm1}]^{\mathbb{Z}_2}\subset\mathbb{C}_{q,\mathbf{t}}(X)$, we can view $\mathcal{M}$ as a subset of $\End\mathbb{C}_{q,\mathbf{t}}[X^{\pm1}]^{\mathbb{Z}_2}$ and thus view $\mathcal{L}$ as a subset of $\End\mathbb{C}_{q,\mathbf{t}}[X^{\pm1}]^{\mathbb{Z}_2}$. On~the other hand, by Proposition~\ref{prop:polyrepS04}, there is an embedding $\Sk_{A,\boldsymbol{\lambda}}(S_{0,4})\rightarrow\End\mathbb{C}_{q,\mathbf{t}}[X^{\pm1}]^{\mathbb{Z}_2}$ that maps a generator $x\in\{\alpha,\beta,\gamma\}$ to $\mathcal{O}_x$. Since the elements $\alpha$, $\beta$, and $\gamma$ generate $\Sk_{A,\boldsymbol{\lambda}}(S_{0,4})$ as a $\mathbb{C}[A^{\pm1},\lambda_1^{\pm1},\dots,\lambda_4^{\pm1}]$-algebra, we get an embedding 
\[
\Sk_{A,\boldsymbol{\lambda}}(S_{0,4})\hookrightarrow\mathcal{L}\hookrightarrow\mathcal{A}_{q,\mathbf{z}}^{\mathbb{C}^*}/(z_{k,1}z_{k,2}-1,w_1w_2-1)
\]
with the required values on generators. Let $C_n$ be the unique element of $\mathbb{C}[q^{\pm\frac{1}{2}},z_{k,l}^{\pm1},z_a^{\pm1},z_b^{\pm1},w_r]$ whose image under $\iota$ is the expression $f_n(X)$ from Lemma~\ref{lem:imageS04curves}. Then, in the same way as before, one checks that $\iota\left(q^{4-n}z_a^{-1}z_b^{-1}\cdot F_1[x^{2-n}]E_1[1]+C_n\right)$ is the operator appearing in Lemma~\ref{lem:imageS04curves}. This proves the second statement.
\end{proof}

\begin{lemma}
\label{lem:S04surjective}
The embedding from Lemma~\ref{lem:explicitembeddingS04} is a $\mathbb{C}$-algebra isomorphism 
\[
\Sk_{A,\boldsymbol{\lambda}}(S_{0,4})\cong\mathcal{A}_{q,\mathbf{z}}^{\mathbb{C}^*}/(z_{k,1}z_{k,2}-1,w_1w_2-1).
\]
\end{lemma}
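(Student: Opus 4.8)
The plan is to use that the homomorphism is already known to be injective by Lemma~\ref{lem:explicitembeddingS04}, so what remains is surjectivity, i.e. showing that a generating set of $\mathcal{A}_{q,\mathbf{z}}^{\mathbb{C}^*}/(z_{k,1}z_{k,2}-1,w_1w_2-1)$ lies in the image. Recall from Section~\ref{sec:GeneratingTheCoulombBranch} that $\mathcal{A}_{q,\mathbf{z}}$ is generated over $\mathbb{C}[q^{\pm\frac12},z_{k,l}^{\pm1},z_a^{\pm1},z_b^{\pm1}]$ by the dressed minuscule monopole operators $E_n[f]$, $F_n[f]$ (with $n\in\{1,2\}$, since the gauge rank is two) together with all symmetric Laurent polynomials in $w_1,w_2$; passing to $\mathbb{C}^*$-invariants retains only words of total $D$-degree zero, after which we impose the relations $z_{k,1}z_{k,2}=1$ and $w_1w_2=1$. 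The scalars are in the image, since $A\mapsto q^{-\frac12}$ and the $\lambda_i$ map to $\pm z_{1,1},\pm z_{2,1},z_a,z_b$, and modulo $w_1w_2-1$ the symmetric Laurent polynomials in $w_1,w_2$ are exactly the polynomials in $w_1+w_2$, which is the image of $\alpha$. So the only real content is the monopole part.

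For that, the key input is the second assertion of Lemma~\ref{lem:explicitembeddingS04}: the element $q^{4-n}z_a^{-1}z_b^{-1}\cdot F_1[x^{2-n}]E_1[1]+C_n$ lies in the image for every $n\in\mathbb{Z}$, where each $C_n$ is a symmetric Laurent polynomial in $w_1,w_2$ (it is the unique preimage of $f_n(X)\in\mathbb{C}_{q,\mathbf{t}}[X^{\pm1}]^{\mathbb{Z}_2}$ under the embedding of Lemma~\ref{lem:relateambientringsS04}). Subtracting these symmetric polynomials, which we have already placed in the image, and taking $\mathbb{C}$-linear combinations, we get $F_1[f]E_1[1]$ in the image for every one-variable Laurent polynomial $f$. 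It then remains to see that $\mathcal{A}_{q,\mathbf{z}}^{\mathbb{C}^*}/(z_{k,1}z_{k,2}-1,w_1w_2-1)$ is generated as a $\mathbb{C}$-algebra by the scalars, the symmetric polynomials in $w_1,w_2$, and the $F_1[f]E_1[1]$. Here I would first identify the target, via Lemma~\ref{lem:KSL2GL2} together with Proposition~\ref{prop:generate} (applied to the quiver~\eqref{eqn:S04quiver}, for which $m=1$), with the quantized Coulomb branch of $(\mathrm{SL}_2\times L,N)$. By the $\mathrm{SL}_2$-analogue of Proposition~\ref{prop:generate} — proved the same way, since every dominant coweight of $\mathrm{SL}_2$ is a multiple of the fundamental one — this algebra is generated over the scalars by $w_1+w_2$ and by the coweight-$\omega^\vee$ (``minuscule'') dressed monopole operators with arbitrary dressing. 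Finally one checks that each of the latter, which in $\mathcal{D}_{q,\mathbf{z}}^{\mathbb{C}^*}/(z_{k,1}z_{k,2}-1,w_1w_2-1)$ is the $D_1D_2^{-1}$- plus $D_2D_1^{-1}$-part of a balanced product $E_1[\cdot]F_1[\cdot]$, is obtained modulo functions of $w$ (i.e. modulo symmetric polynomials in $w_1,w_2$, already in the image) from the family $F_1[x^m]E_1[1]$, $m\in\mathbb{Z}$. This is a finite computation using the explicit commutation relations among the $E_n[f],F_n[f]$ from~\cite{FT19a,FT19b} together with $D_rw_s=q^{2\delta_{rs}}w_sD_r$; alternatively one can argue directly that $w_1+w_2$ and the $F_1[f]E_1[1]$ generate $\mathcal{A}_{q,\mathbf{z}}^{\mathbb{C}^*}/(z_{k,1}z_{k,2}-1,w_1w_2-1)$, expressing each invariant monopole word of type $E_1F_1$, $E_2F_2$, or their higher products through these generators.

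The hard part will be precisely this last matching: pinning down a sufficiently small generating set of the $\mathbb{C}^*$-invariant Coulomb branch and verifying that each generator is a polynomial in $w_1+w_2$ and the $F_1[f]E_1[1]$. The delicate points are controlling the operator-product terms produced by multiplying two coweight-$1$ monopole operators — the passage from coweight $1$ to coweight $2$, where the non-minuscule $D_r^{\pm 2}$ contributions cancel only after summing — and tracking how the symmetric-polynomial dressing factors transform when commuted past the $D_r$. Once this is done, the image of $\Sk_{A,\boldsymbol{\lambda}}(S_{0,4})$ contains all generators of the target, so the injection of Lemma~\ref{lem:explicitembeddingS04} is an isomorphism. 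As a consistency check one may compare the presentation of $\Sk_{A,\boldsymbol{\lambda}}(S_{0,4})$ from Proposition~\ref{prop:skeinS04} with an analogous presentation of $\mathcal{A}_{q,\mathbf{z}}^{\mathbb{C}^*}/(z_{k,1}z_{k,2}-1,w_1w_2-1)$ read off from the monopole-operator relations, but the route above is the one dictated by the setup of Lemma~\ref{lem:explicitembeddingS04}.
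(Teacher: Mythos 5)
Your overall strategy is the same as the paper's: injectivity is already known, the scalars and the symmetric Laurent polynomials in $w_1,w_2$ (which modulo $w_1w_2-1$ are polynomials in the image of $\alpha$) are easy, and the substance is to show that the monopole part of $\mathcal{A}_{q,\mathbf{z}}^{\mathbb{C}^*}/(z_{k,1}z_{k,2}-1,w_1w_2-1)$ is generated by the family $F_1[x^m]E_1[1]$ supplied by the curves $\gamma_n$ via Lemma~\ref{lem:explicitembeddingS04}. However, the step you defer to ``a finite computation'' is exactly where the difficulty sits, and as stated your plan does not go through.

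When one actually performs the reduction (the paper does it with the commutators $\bigl[E_1[x^m],w_1^{\pm1}+w_2^{\pm1}\bigr]=(q^{\pm2}-1)E_1[x^{m\pm1}]$, the relation $[E_1[x^m],F_1[x^n]]=(q-q^{-1})h$ from~\cite{FT19b}, and the dressing-change identities~\eqref{eqn:changedressing}), every element of $\mathcal{A}_{q,\mathbf{z}}'$ is expressed through products of \emph{two} families, $h_1\cdot F_1[x^n]E_1[1]\cdot h_2$ and $h_1\cdot F_1[x^n]E_1[x^{-1}]\cdot h_2$. The second family is not visibly in the subalgebra generated by the first: the natural identity relating them,
\begin{align*}
(q^{-2}-1)\,F_1[x^n]E_1[x^{-1}]=F_1[x^n]E_1[1](w_1^{-1}+w_2^{-1})-(w_1^{-1}+w_2^{-1})F_1[x^n]E_1[1]-(1-q^{-2})F_1[x^{n-1}]E_1[1],
\end{align*}
carries an overall factor of $q^{-2}-1$, which is not invertible over $\mathbb{C}[q^{\pm\frac12}]$. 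So this argument only gives membership after tensoring with $\mathbb{C}(q)$, and one needs a separate reason why $F_1[x^n]E_1[x^{-1}]$ is hit by the integral (non-localized) skein algebra. The paper supplies this by exhibiting an explicit preimage in $\Sk_{A,\boldsymbol{\lambda}}(S_{0,4})\otimes\mathbb{C}(A)$ and observing that it actually lies in $\Sk_{A,\boldsymbol{\lambda}}(S_{0,4})$, because the skein relations force $\gamma_m\alpha-\alpha\gamma_m=(A^2-A^{-2})\gamma_{m-1}+(A^{-2}-A^2)\gamma_{m+1}$ to be divisible by $A^4-1$. Nothing in your proposal anticipates this divisibility argument, and without it (or a genuine proof that $F_1[x^n]E_1[x^{-1}]$ equals a combination of the $F_1[x^m]E_1[1]$ plus a symmetric \emph{polynomial} over the unlocalized base ring) the surjectivity claim is unproved. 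A secondary, smaller issue: your reduction of the target's generators rests on an unproved ``$\mathrm{SL}_2$-analogue of Proposition~\ref{prop:generate}''; the paper avoids this by staying with the $\mathrm{GL}_2$ quiver generators (where Theorem~4.32 of~\cite{FT19b} applies), passing to $\mathbb{C}^*$-invariants and the quotient via Lemma~\ref{lem:KSL2GL2}, and carrying out the rewriting by hand.
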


\begin{proof}
Let us abbreviate $\mathcal{A}_{q,\mathbf{z}}'\coloneqq\mathcal{A}_{q,\mathbf{z}}^{\mathbb{C}^*}/(z_{k,1}z_{k,2}-1,w_1w_2-1)$. This algebra is generated by dressed minuscule monopole operators together with all symmetric polynomials in the~$w_i$. From the definition of the dressed minuscule monopole operators, we see that $E_2[f]$ and $F_2[f]$ are simply the operators $D_1D_2$ and $D_1^{-1}D_2^{-1}$ rescaled by symmetric polynomials. This implies that the algebra $\mathcal{A}_{q,\mathbf{z}}'$ is in fact generated by dressed minuscule monopole operators of the form $E_1[f]$ and~$F_1[f]$, together with all symmetric polynomials in the~$w_i$.

In the algebra $\mathcal{A}_{q,\mathbf{z}}$, it is easy to show that we have the relations 
\begin{align}
\label{eqn:commutepolynomial}
\begin{split}
\left[E_1[x^m],w_1^{\pm1}+w_2^{\pm1}\right] &= (q^{\pm2}-1)E_1[x^{m\pm1}], \\
\left[F_1[x^m],w_1^{\pm1}+w_2^{\pm1}\right] &= (1-q^{\pm2})F_1[x^{m\pm1}].
\end{split}
\end{align}
By the results of~\cite{FT19b}, in particular their equations~(3.34) and~(3.39), we have a relation 
\begin{equation}
\label{eqn:FT}
\left[E_1[x^m],F_1[x^n]\right]=(q-q^{-1})\cdot h
\end{equation}
for some symmetric polynomial $h$ in the~$w_i$. (Our operators $E_1[x^m]$ and $F_1[x^n]$ correspond to the operators $F_{2,1}^{(m)}$ and $E_{1,2}^{(n-2)}$, respectively, in~\cite{FT19b}). Finally, in $\mathcal{A}_{q,\mathbf{z}}'$ we have the relations 
\begin{align}
\label{eqn:changedressing}
\begin{split}
E_1[x^m]F_1[x^n] &= (w_1^m+w_2^m)E_1[1]F_1[x^n]-E_1[x^{-m}]F_1[x^n], \\
F_1[x^n]E_1[x^m] &= F_1[x^{n+1}]E_1[x^{m-1}]+q^{-2}F_1[x^{n-1}]E_1[x^{m-1}]-q^{-2}F_1[x^n]E_1[x^{m-2}],
\end{split}
\end{align}
which can be checked directly. 

Let us consider an element $p\in\mathcal{A}_{q,\mathbf{z}}'$ which is a product of dressed minuscule monopole operators of the form $E_1[f]$ and~$F_1[f]$ and symmetric polynomials in the~$w_i$. Note that any symmetric polynomial in the~$w_i$ is a linear combination of polynomials of the form $w_1^n+w_2^n$, and the latter can be written as $P_n(w_1+w_2)$ where $P_n$ is a polynomial of degree~$n$. Using~\eqref{eqn:commutepolynomial} and~\eqref{eqn:FT}, we can write~$p$ as a linear combination of products of expressions of the form~$h_1\cdot E_1[x^m]F_1[x^n]\cdot h_2$ where~$h_1$ and~$h_2$ are symmetric polynomials in the~$w_i$. We can then use the first relation of~\eqref{eqn:changedressing} to write~$p$ as a linear combination of products of expressions of the form~$h_1\cdot E_1[x^m]F_1[x^n]\cdot h_2$ where~$m\geq0$. Finally, we can use~\eqref{eqn:FT} and the second relation of~\eqref{eqn:changedressing} to write~$p$ as a linear combination of products of expressions of the form $h_1\cdot F_1[x^n]E_1[1]\cdot h_2$ and $h_1\cdot F_1[x^n]E_1[x^{-1}]\cdot h_2$.

We claim that $F_1[x^n]E_1[x^{-1}]$ is in the image of the embedding $\Sk_{A,\boldsymbol{\lambda}}(S_{0,4})\hookrightarrow\mathcal{A}_{q,\mathbf{z}}'$ of Lemma~\ref{lem:explicitembeddingS04}. Indeed, by applying the two relations~\eqref{eqn:commutepolynomial} in order one obtains 
\begin{align*}
F_1[x^n]E_1[x^{-1}]=\frac{1}{q^{-2}-1}\bigl(F_1[x^n]E_1[1](w_1^{-1}+w_2^{-1}) &- (w_1^{-1}+w_2^{-1})F_1[x^n]E_1[1] \\
&- (1-q^{-2})F_1[x^{n-1}]E_1[1]\bigr)
\end{align*}
in the localized algebra $\mathcal{A}_{q,\mathbf{z}}'\otimes\mathbb{C}(q)$. The embedding of Lemma~\ref{lem:explicitembeddingS04} induces an embedding of localized algebras $\Sk_{A,\boldsymbol{\lambda}}(S_{0,4})\otimes\mathbb{C}(A)\hookrightarrow\mathcal{A}_{q,\mathbf{z}}'\otimes\mathbb{C}(q)$. It maps $\gamma_n\mapsto B_n\cdot F_1[x^{2-n}]E_1[1]+C_n$ where $B_n=q^{4-n}z_a^{-1}z_b^{-1}$ and $C_n\in\mathbb{C}[q^{\pm\frac{1}{2}},z_{k,l}^{\pm1},z_a^{\pm1},z_b^{\pm1},w_r]$. Let us write $Q_n$ and~$R_n$ for the unique elements of $\Sk_{A,\boldsymbol{\lambda}}(S_{0,4})$ whose images under this embedding are $B_n$ and $C_n$, respectively. Then this embedding maps the element 
\[
\frac{Q_{-n+2}^{-1}}{A^4-1}\bigl(\gamma_{-n+2}\alpha-\alpha\gamma_{-n+2} - (1-A^4)A^{-2}\gamma_{-n+3}\bigr) - A^{-2}R_{-n+3}Q_{-n+2}^{-1}
\]
to $F_1[x^n]E_1[x^{-1}]$. For any index~$m$, the skein relation implies $\alpha\gamma_m=A^2\gamma_{m+1}+A^{-2}\gamma_{m-1}+T_m$ for some $T_m\in\mathbb{C}[A^{\pm1},\lambda_1^{\pm1},\dots,\lambda_4^{\pm1}]$. Similarly, we have $\gamma_m\alpha=A^2\gamma_{m-1}+A^{-2}\gamma_{m+1}+T_m$. Thus 
\[
\gamma_m\alpha-\alpha\gamma_m=(A^2-A^{-2})\gamma_{m-1}+(A^{-2}-A^2)\gamma_{m+1}
\]
is divisible by $A^4-1$. This implies the claim.

We thus see that $F_1[x^n]E_1[x^{-1}]$ is in the image of the embedding $\Sk_{A,\boldsymbol{\lambda}}(S_{0,4})\hookrightarrow\mathcal{A}_{q,\mathbf{z}}'$. Lemma~\ref{lem:explicitembeddingS04} says that $F_1[x^n]E_1[1]$ is also in the image of this embedding. Hence the element~$p$ considered above is in the image of this embedding. This proves that the embedding is surjective and hence an isomorphism.
\end{proof}

\begin{theorem}
\label{thm:S04main}
Let $(\widetilde{G},N)$ be the group and representation associated to a pants decomposition of the four-holed sphere~$S_{0,4}$. Then there is a $\mathbb{C}$-algebra isomorphism 
\[
\Sk_{A,\boldsymbol{\lambda}}(S_{0,4})\cong K^{\widetilde{G}_{\mathcal{O}}\rtimes\mathbb{C}^*}(\mathcal{R}_{G,N}).
\]
\end{theorem}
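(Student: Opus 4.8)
The plan is to assemble the statement from the three ingredients already in place: the presentation of the skein algebra in Lemma~\ref{lem:S04surjective}, the description of the $\mathrm{GL}_2$-Coulomb branch in Proposition~\ref{prop:generate}, and the passage from $\mathrm{GL}_2$ to $\mathrm{SL}_2$ in Lemma~\ref{lem:KSL2GL2}. First I would unpack the surface data. A pants decomposition of $S_{0,4}$ consists of a single separating curve $\gamma$ cutting $S_{0,4}$ into two pairs of pants $P_1,P_2$, each of which has two boundary components lying on $\partial S_{0,4}$. By the construction of Section~\ref{sec:RepresentationsFromSurfaces} the associated gauge group is $G=G_\gamma\cong\mathrm{SL}_2(\mathbb{C})$, the flavor group is $F\cong(\mathbb{C}^*)^4$ (one diagonal torus $F_{\partial_i}$ per hole), and the representation is of cotangent type, $M=N\oplus N^*$ with $N=N_{P_1}\oplus N_{P_2}$ and $N_{P_i}\cong\mathbb{C}^2\otimes\mathbb{C}^2$.

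Next I would identify this data with the quiver~\eqref{eqn:S04quiver} (which is the quiver of Figure~\ref{subfig:typeA}): the gauge node carries $V=\mathbb{C}^2$ coming from $M_\gamma$, each of the two framing nodes carries $U_i=\mathbb{C}^2$ coming from the factor $M_{\partial}$ of one of the two holes not used to split $M_{P_i}$ into $N_{P_i}\oplus N_{P_i}^*$, and the two arrows record $N_{P_1}$ and $N_{P_2}$. Under this identification the torus $F$ matches the group $L=L_1\times L_2$ of Lemma~\ref{lem:KSL2GL2}, where $L_1=T_{\mathrm{SL}_2}^2$ acts on the framing nodes (via the diagonal tori of, say, $\partial_2$ and $\partial_4$) and $L_2=(\mathbb{C}^*)^2$ is the arrow-rescaling torus (identified with the diagonal tori of $\partial_1$ and $\partial_3$, which provide the cotangent-type grading). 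Thus the pair $(\widetilde{G},N)$ attached to the surface is precisely the pair $(H\times L,N)$ of Lemma~\ref{lem:KSL2GL2} for the quiver~\eqref{eqn:S04quiver}, with $m=1$, $H=\mathrm{SL}_2$, $G=\mathrm{GL}_2$, and $F=T_{\mathrm{GL}_2}^2\times(\mathbb{C}^*)^2$.

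Then I would chain the identifications. Lemma~\ref{lem:KSL2GL2} gives
\[
K^{\widetilde{G}_{\mathcal{O}}\rtimes\mathbb{C}^*}(\mathcal{R}_{G,N})\cong K^{(G\times F)_{\mathcal{O}}\rtimes\mathbb{C}^*}(\mathcal{R}_{G,N})^{\mathbb{C}^*}/(z_{k,1}z_{k,2}-1,\,w_1w_2-1),
\]
and Proposition~\ref{prop:generate} identifies $K^{(G\times F)_{\mathcal{O}}\rtimes\mathbb{C}^*}(\mathcal{R}_{G,N})$ with $\mathcal{A}_{q,\mathbf{z}}$ via the embedding~\eqref{eqn:embedding}. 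Since that embedding respects the $\pi_1(\mathrm{GL}_2)$-grading of the $K$-theory, equivalently the $D$-degree grading of $\mathcal{A}_{q,\mathbf{z}}$, it is equivariant for the $\mathbb{C}^*$-action of Lemma~\ref{lem:KSL2GL2}, so it restricts to $K^{(G\times F)_{\mathcal{O}}\rtimes\mathbb{C}^*}(\mathcal{R}_{G,N})^{\mathbb{C}^*}\cong\mathcal{A}_{q,\mathbf{z}}^{\mathbb{C}^*}$ and descends to the quotients by $(z_{k,1}z_{k,2}-1,\,w_1w_2-1)$. Combining these yields $K^{\widetilde{G}_{\mathcal{O}}\rtimes\mathbb{C}^*}(\mathcal{R}_{G,N})\cong\mathcal{A}_{q,\mathbf{z}}^{\mathbb{C}^*}/(z_{k,1}z_{k,2}-1,\,w_1w_2-1)$, and Lemma~\ref{lem:S04surjective} identifies the right-hand side with $\Sk_{A,\boldsymbol{\lambda}}(S_{0,4})$; composing proves the theorem. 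No substantial obstacle remains here, since all of the real work is in the earlier lemmas; the one point that requires care is the bookkeeping in the identification of the surface data with the quiver, namely matching the four hole tori of $S_{0,4}$ with $L_1\times L_2$ (which two holes play the role of the framing tori and which the cotangent-type/arrow-rescaling tori) and checking that the $\mathbb{C}^*$-action of Lemma~\ref{lem:KSL2GL2} corresponds under Proposition~\ref{prop:generate} to the $D$-degree grading, so that passing to invariants and then to the quotient is legitimate. The parameter dictionary ($A\mapsto q^{-\frac12}$ and the $\lambda_i\mapsto\pm z_\bullet$) is already recorded in Lemma~\ref{lem:explicitembeddingS04}.
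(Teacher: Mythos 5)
Your proposal is correct and follows essentially the same route as the paper: identify the surface data $(\widetilde{G},N)$ with the $\mathrm{SL}_2$-form of the quiver~\eqref{eqn:S04quiver}, apply Proposition~\ref{prop:generate} to the $\mathrm{GL}_2$-version and Lemma~\ref{lem:KSL2GL2} to descend to $\mathrm{SL}_2$ with the restricted flavor torus, and conclude with Lemma~\ref{lem:S04surjective}. The bookkeeping you flag (which boundary tori play the role of the framing tori versus the arrow-rescaling tori, and the compatibility of the $\mathbb{C}^*$-action with the $D$-degree grading) is exactly what the paper's proof records, the latter point being handled inside the proof of Lemma~\ref{lem:KSL2GL2} itself.
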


\begin{proof}
Any pants decomposition of~$S_{0,4}$ is obtained from a disjoint union $P_1\sqcup P_2$ of pairs of pants $P_1$ and~$P_2$ by identifying a boundary component of~$P_1$ with a boundary component of~$P_2$. Thus, from the construction of Section~\ref{sec:RepresentationsFromSurfaces}, we see that the corresponding gauge group is $G=\mathrm{SL}_2(\mathbb{C})$. The representation is 
\[
N=(\mathbb{C}^2\otimes\mathbb{C}^2)\oplus(\mathbb{C}^2\otimes\mathbb{C}^2)\cong\Hom(\mathbb{C}^2,\mathbb{C}^2)\oplus\Hom(\mathbb{C}^2,\mathbb{C}^2)
\]
with the $G$-action defined by $g\cdot(f_1,f_2)=(g\circ f_1,g\circ f_2)$ for any element $g\in G$ and linear maps $f_1$,~$f_2:\mathbb{C}^2\rightarrow\mathbb{C}^2$. Let $F_1$ and $F_2$ be two copies of the diagonal subgroup of $\mathrm{SL}_2(\mathbb{C})$. Then there is an action of the product $F_1\times F_2$ on~$N$ defined by $(h_1,h_2)\cdot(f_1,f_2)=(f_1\circ h_1^{-1},f_2\circ h_2^{-1})$ for $h_i\in F_i$. If we let $F_3$ and $F_4$ be copies of $\mathbb{C}^*$, then there is a further action of $F_3\times F_4$ on~$N$ by coordinatewise rescaling. The flavor symmetry group is the product $F=F_1\times F_2\times F_3\times F_4\cong(\mathbb{C}^*)^4$.

We also have an action of $G'=\mathrm{GL}_2(\mathbb{C})$ on~$N$ given by $g\cdot(f_1,f_2)=(g\circ f_1,g\circ f_2)$ for~$g\in G'$. If~$F_1'$ and $F_2'$ are two copies of the diagonal subgroup of~$\mathrm{GL}_2(\mathbb{C})$, then there is an action of $F_1'\times F_2'$ on~$N$ by $(h_1,h_2)\cdot(f_1,f_2)=(f_1\circ h_1^{-1},f_2\circ h_2^{-1})$ for $h_i\in F_i'$. If we set $\widetilde{G}'=G'\times F'$ where $F'=F_1'\times F_2'\times F_3\times F_4$, then $(\widetilde{G}',N)$ is precisely the pair arising from the quiver~\eqref{eqn:S04quiver}. According to Proposition~\ref{prop:generate}, the embedding constructed in Section~\ref{sec:TheAbelianCase} identifies the Coulomb branch of $(\widetilde{G}',N)$ with the algebra~$\mathcal{A}_{q,\mathbf{z}}$. It therefore follows from Lemma~\ref{lem:KSL2GL2} that 
\[
\mathcal{A}_{q,\mathbf{z}}^{\mathbb{C}^*}/(z_{k,1}z_{k,2}-1,w_1w_2-1)\cong K^{\widetilde{G}_{\mathcal{O}}\rtimes\mathbb{C}^*}(\mathcal{R}_{G,N}).
\]
Hence the desired statement follows from Lemma~\ref{lem:S04surjective}.
\end{proof}

\subsection{The one-holed torus}

To understand the Coulomb branch of the representation associated to a one-holed torus, we consider the Jordan quiver: 
\begin{equation}
\label{eqn:Jordanquiver}
\xymatrix{
*+[o][F-]{2} \ar@(ur,ul)
}
\end{equation}
By applying the construction of Section~\ref{sec:MonopoleOperators} to this quiver, we get $\mathbb{C}[q^{\pm\frac{1}{2}},z_a^{\pm1}]$-algebras~$\mathcal{D}_{q,\mathbf{z}}^0$ and~$\mathcal{D}_{q,\mathbf{z}}$ where $a$ is the arrow of the quiver. Since there is only one arrow, we will abbreviate $z=z_a$ and denote these algebras as $\mathcal{D}_{q,z}^0$ and~$\mathcal{D}_{q,z}$, respectively. These algebras are generated by variables~$w_r$, $D_r$, and their inverses where $r\in\{1,2\}$. Here again we write only the $r$ index since the quiver has only one gauge node. There are $\mathbb{C}^*$-actions on~$\mathcal{D}_{q,z}^0$ and~$\mathcal{D}_{q,z}$ that simultaneously rescale the~$D_r$, and we will write $(\mathcal{D}_{q,z}^0)^{\mathbb{C}^*}$ and~$\mathcal{D}_{q,z}^{\mathbb{C}^*}$ for the corresponding invariant subalgebras.

\begin{lemma}
\label{lem:relateambientringsS11}
Define the operator $\varpi\in\End\mathbb{C}_{q,t}(X)$ by $(\varpi f)(X)=f(qX)$ for $f\in\End\mathbb{C}_{q,t}(X)$. Then the assignments 
\begin{gather*}
q^{\frac{1}{2}}\mapsto q^{\frac{1}{2}}, \quad z\mapsto q^{-1}t, \quad w_1\mapsto X, \quad w_2\mapsto X^{-1}, \quad D_1D_2^{-1}\mapsto q^{-4}X^{-4}\varpi^2
\end{gather*}
determine a well defined injective $\mathbb{C}$-algebra homomorphism 
\[
\mathcal{D}_{q,z}^{\mathbb{C}^*}/(w_1w_2-1)\hookrightarrow\End\mathbb{C}_{q,t}(X).
\]
\end{lemma}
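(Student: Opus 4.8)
The plan is to follow the proof of Lemma~\ref{lem:relateambientringsS04} almost verbatim, the only simplification being that the Jordan quiver has fewer flavor variables. First I would observe that the $\mathbb{C}^*$-action rescaling $D_1,D_2$ simultaneously has invariant subalgebra $(\mathcal{D}_{q,z}^0)^{\mathbb{C}^*}$ generated over $\mathbb{C}[q^{\pm\frac{1}{2}},z^{\pm1}]$ by $w_1^{\pm1}$, $w_2^{\pm1}$ and $(D_1D_2^{-1})^{\pm1}$, since these are exactly the monomials of total $D$-degree zero. Then I would check that the proposed assignments send the relations of $\mathcal{D}_{q,z}^0$ among these generators — namely $[w_1,w_2]=0$, $(D_1D_2^{-1})w_1=q^2w_1(D_1D_2^{-1})$, and $(D_1D_2^{-1})w_2=q^{-2}w_2(D_1D_2^{-1})$ — to valid identities in $\End\mathbb{C}_{q,t}(X)$; this uses only the elementary relation $\varpi X=qX\varpi$, whence $\varpi^2X^{\pm1}=q^{\pm2}X^{\pm1}\varpi^2$. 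This produces a well-defined $\mathbb{C}$-algebra homomorphism $(\mathcal{D}_{q,z}^0)^{\mathbb{C}^*}\to\End\mathbb{C}_{q,t}(X)$, and since $w_1w_2-1$ maps to $X\cdot X^{-1}-1=0$ it descends to a homomorphism on $(\mathcal{D}_{q,z}^0)^{\mathbb{C}^*}/(w_1w_2-1)$.

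For injectivity I would argue exactly as in Lemma~\ref{lem:relateambientringsS04}. A class in the kernel of the descended map is represented by some $P\in(\mathcal{D}_{q,z}^0)^{\mathbb{C}^*}$, which, using $w_2=w_1^{-1}$, may be taken to be a (noncommutative) Laurent polynomial in $q^{\frac{1}{2}}$, $z$, $w_1$, and $D_1D_2^{-1}$. It then suffices to check that the images $q^{\frac{1}{2}}$, $q^{-1}t$, $X$, $q^{-4}X^{-4}\varpi^2$ of these satisfy no relations beyond the commutation relations already present in the source — equivalently, that the operators $X^a\varpi^b$ with $a,b\in\mathbb{Z}$ are linearly independent over $\mathbb{C}_{q,t}$, which holds because a nonzero $\mathbb{C}_{q,t}(X)$-linear combination of distinct powers of $\varpi$ is a nonzero operator. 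Hence $P=0$ in the quotient, and the descended map is injective.

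Finally I would pass to the localization. The localizing multiplicative set for the Jordan quiver is generated by $w_1-q^\mu w_2$ and $w_2-q^\mu w_1$ for $\mu\in\mathbb{Z}$ and by $1-q^\mu$ for $\mu\neq0$; under the map these go respectively to $X-q^\mu X^{-1}$, $X^{-1}-q^\mu X$, and $1-q^\mu$, all invertible in $\End\mathbb{C}_{q,t}(X)$ (the first two as nonzero elements of the field $\mathbb{C}_{q,t}(X)$ acting by multiplication, the last because $1-q^\mu\in\mathbb{C}_{q,t}^\times$ for $\mu\neq0$). So the homomorphism extends to $\mathcal{D}_{q,z}^{\mathbb{C}^*}/(w_1w_2-1)$, and since a fraction lies in the kernel iff its numerator does, injectivity is preserved.

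The main obstacle is the injectivity step, specifically ensuring that the quotient by $(w_1w_2-1)$ introduces no unexpected collapsing: one must know that $(\mathcal{D}_{q,z}^0)^{\mathbb{C}^*}/(w_1w_2-1)$ genuinely has the monomials $z^jw_1^c(D_1D_2^{-1})^k$ (with the $q^{\pm\frac{1}{2}}$ scalars) as a basis and that these map to a linearly independent family of operators. Both facts are elementary — the first is a PBW-type normal form for this quantum-torus-like algebra, the second is the independence of the $X^a\varpi^b$ noted above — but they are exactly what makes the argument go through, just as in the proof of Lemma~\ref{lem:relateambientringsS04}.
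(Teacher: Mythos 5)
Your proposal is correct and follows essentially the same route as the paper's own proof: verify the commutation relations land on identities in $\End\mathbb{C}_{q,t}(X)$, note that $w_1w_2-1$ maps to zero, prove injectivity by reducing a kernel element to a Laurent polynomial in $q^{\frac{1}{2}}$, $z$, $w_1$, $D_1D_2^{-1}$ whose images satisfy no relations, and then pass to the localization using that a fraction vanishes iff its numerator does. You in fact supply a few details the paper leaves implicit (the linear independence of the operators $X^a\varpi^b$ and the invertibility of the images of the localizing set), which only strengthens the argument.
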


\begin{proof}
One can check that these assignments map the commutation relations in~$(\mathcal{D}_{q,z}^0)^{\mathbb{C}^*}$ to relations in $\End\mathbb{C}_{q,t}(X)$ and therefore provide a well defined $\mathbb{C}$-algebra homomorphism $(\mathcal{D}_{q,z}^0)^{\mathbb{C}^*}\rightarrow\End\mathbb{C}_{q,t}(X)$. The element $w_1w_2-1$ maps to zero, so this induces a well defined $\mathbb{C}$-algebra homomorphism 
\begin{equation}
\label{eqn:relateambientringsS11}
(\mathcal{D}_{q,z}^0)^{\mathbb{C}^*}/(w_1w_2-1)\rightarrow\End\mathbb{C}_{q,t}(X).
\end{equation}
Suppose $\bar{P}$ lies in the kernel of this map. This $\bar{P}$ is the class of some $P\in(\mathcal{D}_{q,z}^0)^{\mathbb{C}^*}$, which we can take to be a Laurent polynomial in $q^{\frac{1}{2}}$, $z$, $w_1$, and $D_1D_2^{-1}$. The images of these elements under the map~\eqref{eqn:relateambientringsS11} satisfy no relations, so we must have $P=0$ in~$(\mathcal{D}_{q,z}^0)^{\mathbb{C}^*}$. Hence the map~\eqref{eqn:relateambientringsS11} is injective. It induces a $\mathbb{C}$-algebra homomorphism on the localization $\mathcal{D}_{q,z}^{\mathbb{C}^*}/(w_1w_2-1)$. Since a fraction maps to zero if and only if its numerator maps to zero, the map on the localization is also injective.
\end{proof}

Let us now write $\mathcal{A}_{q,z}=\mathcal{A}_{q,\mathbf{z}}$ as in~Section~\ref{sec:GeneratingTheCoulombBranch} for the algebra generated by all minuscule monopole operators and all symmetric polynomials in the~$w_r$. Since our quiver has only one vertex, we again use only one index in the notation for the minuscule monopole operators. We also consider the relative skein algebra $\Sk_{A,\lambda}(S_{1,1})$ of the one-holed torus. It has a $\mathbb{Z}_2$-action where the nontrivial element acts on the generators of Proposition~\ref{prop:skeinS11} by $\alpha\mapsto\alpha$, $\beta\mapsto-\beta$, $\gamma\mapsto-\gamma$.

\begin{lemma}
\label{lem:explicitembeddingS11}
There exists an injective $\mathbb{C}$-algebra homomorphism $\Sk_{A,\lambda}(S_{1,1})^{\mathbb{Z}_2}\hookrightarrow\mathcal{A}_{q,z}^{\mathbb{C}^*}/(w_1w_2-1)$ mapping $A\mapsto q^{-\frac{1}{2}}$, $\lambda\mapsto z$, and
\begin{equation}
\label{eqn:assignmentsS11}
\begin{split}
\alpha &\mapsto w_1+w_2, \\
\beta^2 &\mapsto qz^{-1}\cdot F_1[1]E_1[1], \\
\gamma\beta &\mapsto q^{\frac{5}{2}}z^{-1}\cdot F_1[x]E_1[1], \\
\gamma^2 &\mapsto z^{-1}\cdot F_1[x]E_1[x^{-1}].
\end{split}
\end{equation}
If we write $\gamma_n$ for the element in Figure~\ref{fig:S11curves}, then $\gamma_n\beta\mapsto q^{(3n+2)/2}z^{-1}\cdot F_1[x^n]E_1[1]$ and $\gamma_n\gamma\mapsto q^{(3n-3)/2}z^{-1}\cdot F_1[x^n]E_1[x^{-1}]$.
\end{lemma}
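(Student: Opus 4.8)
The plan is to mirror the proof of Lemma~\ref{lem:explicitembeddingS04}, replacing the four-holed sphere data with the corresponding one-holed torus data: the polynomial representation of Proposition~\ref{prop:polyrepS11} plays the role of Proposition~\ref{prop:polyrepS04}, and the embedding $\iota\colon\mathcal{D}_{q,z}^{\mathbb{C}^*}/(w_1w_2-1)\hookrightarrow\End\mathbb{C}_{q,t}(X)$ of Lemma~\ref{lem:relateambientringsS11} plays the role of Lemma~\ref{lem:relateambientringsS11}'s analogue for $S_{0,4}$. The genuinely new feature is that we pass to a $\mathbb{Z}_2$-invariant subalgebra, so I would first record a generating set for it. Since $\Sk_{A,\lambda}(S_{1,1})$ is spanned over $\mathbb{C}[A^{\pm1},\lambda^{\pm1}]$ by words in $\alpha,\beta,\gamma$ and the $\mathbb{Z}_2$-action negates $\beta$ and $\gamma$ and fixes $\alpha$, the invariant subalgebra is spanned by words with an even total number of $\beta$'s and $\gamma$'s; using the commutator relations of Proposition~\ref{prop:skeinS11} one may move an $\alpha$ past an adjacent $\beta$ or $\gamma$ at the cost of swapping the two letters and producing a term of strictly lower degree, and rewrite $\beta\gamma = A^2\gamma\beta+(A^{-1}-A^3)\alpha$. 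An induction on word length then shows that $\Sk_{A,\lambda}(S_{1,1})^{\mathbb{Z}_2}$ is generated by $\alpha$, $\beta^2$, $\gamma\beta$, and $\gamma^2$. Restricting the embedding of Proposition~\ref{prop:polyrepS11} to the invariant subalgebra yields an injective homomorphism $\Sk_{A,\lambda}(S_{1,1})^{\mathbb{Z}_2}\hookrightarrow\End\mathbb{C}_{q,t}[X^{\pm1}]^{\mathbb{Z}_2}$ sending these generators to $X+X^{-1}$, $\mathcal{O}_\beta^2$, $\mathcal{O}_\gamma\mathcal{O}_\beta$, and $\mathcal{O}_\gamma^2$, where $\mathcal{O}_\beta$ and $\mathcal{O}_\gamma$ are the operators on the right-hand sides of~\eqref{eqn:polyrepS11}.

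Next I would compute, using Definition~\ref{def:monopole} specialized to the Jordan quiver (where the framing-node product in the formula for $F_{i,n}[f]$ is empty and the relevant roots and chambers are the ordinary ones for $\mathrm{GL}_2$) together with $\iota$, the images $\iota(w_1+w_2)$, $\iota(qz^{-1}F_1[1]E_1[1])$, $\iota(q^{5/2}z^{-1}F_1[x]E_1[1])$, and $\iota(z^{-1}F_1[x]E_1[x^{-1}])$. A direct calculation should show these equal $X+X^{-1}$, $\mathcal{O}_\beta^2$, $\mathcal{O}_\gamma\mathcal{O}_\beta$, and $\mathcal{O}_\gamma^2$; the prefactors $qz^{-1}$, $q^{5/2}z^{-1}$, $z^{-1}$ are exactly what is needed to match coefficients, since $\iota$ sends $z\mapsto q^{-1}t$ and fixes $q^{1/2}$, matching $\lambda\mapsto q^{-1}t$ and $A\mapsto q^{-1/2}$ in Proposition~\ref{prop:polyrepS11}. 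These four elements lie in $\mathcal{A}_{q,z}^{\mathbb{C}^*}/(w_1w_2-1)$, being products of dressed minuscule monopole operators and symmetric polynomials of total degree zero in the $D_r$. Hence $\iota$ carries the $\mathbb{C}[q^{\pm1/2},z^{\pm1}]$-subalgebra of $\mathcal{A}_{q,z}^{\mathbb{C}^*}/(w_1w_2-1)$ generated by them isomorphically onto the subalgebra of $\End\mathbb{C}_{q,t}[X^{\pm1}]^{\mathbb{Z}_2}$ that contains the image of $\Sk_{A,\lambda}(S_{1,1})^{\mathbb{Z}_2}$; composing the polynomial representation with $\iota^{-1}$ then gives the desired injective homomorphism with the stated values on generators.

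For the last statement I would invoke Lemma~\ref{lem:imageS11curves}, which computes the image $\mathcal{O}_{\gamma_n}$ of $\gamma_n$ under the polynomial representation. Both $\gamma_n\beta$ and $\gamma_n\gamma$ are $\mathbb{Z}_2$-invariant, so by injectivity of the polynomial representation it suffices to check in $\End\mathbb{C}_{q,t}[X^{\pm1}]^{\mathbb{Z}_2}$ that $\iota(q^{(3n+2)/2}z^{-1}F_1[x^n]E_1[1])=\mathcal{O}_{\gamma_n}\mathcal{O}_\beta$ and $\iota(q^{(3n-3)/2}z^{-1}F_1[x^n]E_1[x^{-1}])=\mathcal{O}_{\gamma_n}\mathcal{O}_\gamma$. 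This can be carried out either by the same direct computation as above, uniformly in $n$, or inductively: the skein side obeys $\alpha\gamma_m=A\gamma_{m-1}+A^{-1}\gamma_{m+1}$ as in the proof of Lemma~\ref{lem:imageS11curves}, while the monopole side obeys a matching recursion coming from a commutator relation of the same shape as~\eqref{eqn:commutepolynomial}, with base cases $n\in\{0,1\}$ supplied by~\eqref{eqn:assignmentsS11}; the cases $n\leq0$ are handled symmetrically.

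I expect the generation step to be the main obstacle: unlike in the $S_{0,4}$ case one cannot simply quote a presentation, and some care is needed to make the word-reduction argument clean and to rule out that the invariant subalgebra is larger. A secondary point is the bookkeeping of half-integer powers of $q$ in the coefficient matching, which originate from the $q^{-1/2}$ appearing in $\mathcal{O}_\gamma$ and in the prescribed image of $\gamma\beta$. The monopole-operator calculations themselves are routine and entirely parallel to those in Lemma~\ref{lem:explicitembeddingS04}.
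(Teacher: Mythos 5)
Your proposal is correct and follows essentially the same route as the paper: transport the polynomial representation of Proposition~\ref{prop:polyrepS11} through the embedding $\iota$ of Lemma~\ref{lem:relateambientringsS11}, match the monopole operators with $\mathcal{O}_\beta^2$, $\mathcal{O}_\gamma\mathcal{O}_\beta$, $\mathcal{O}_\gamma^2$ by direct computation, and handle the $\gamma_n$ via Lemma~\ref{lem:imageS11curves}. The only real difference is that you supply a word-reduction argument for the claim that $\alpha$, $\beta^2$, $\gamma\beta$, $\gamma^2$ generate $\Sk_{A,\lambda}(S_{1,1})^{\mathbb{Z}_2}$, which the paper simply asserts; your sketch of that step is sound.
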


\begin{proof}
Let $\mathcal{O}_1,\dots,\mathcal{O}_4\in\mathcal{D}_{q,z}^{\mathbb{C}^*}/(w_1w_2-1)$ be the operators defined by the formulas on the right hand side of~\eqref{eqn:assignmentsS11} in order, and let $\mathcal{L}\subset\mathcal{D}_{q,z}^{\mathbb{C}^*}/(w_1w_2-1)$ be the $\mathbb{C}[q^{\pm\frac{1}{2}},z^{\pm1}]$-subalgebra generated by these operators. Similarly, let $\mathcal{O}_\alpha$,~$\mathcal{O}_\beta$,~$\mathcal{O}_\gamma\in\End\mathbb{C}_{q,t}(X)$ be the operators defined by the formulas on the right hand side of~\eqref{eqn:polyrepS11} in order, and let $\mathcal{M}\subset\End\mathbb{C}_{q,t}(X)$ be the $\mathbb{C}_{q,t}$-subalgebra generated by these operators. Let us write $\iota:\mathcal{D}_{q,z}^{\mathbb{C}^*}/(w_1w_2-1)\rightarrow\End\mathbb{C}_{q,t}(X)$ for the embedding of Lemma~\ref{lem:relateambientringsS11}. Then a straightforward calculation using Lemma~\ref{lem:relateambientringsS11} and the definition of the minuscule monopole operators shows that 
\[
\iota(\mathcal{O}_1)=\mathcal{O}_\alpha, \quad \iota(\mathcal{O}_2)=\mathcal{O}_\beta^2, \quad \iota(\mathcal{O}_3)=\mathcal{O}_\gamma\mathcal{O}_\beta, \quad \iota(\mathcal{O}_4)=\mathcal{O}_\gamma^2,
\]
so we can view $\iota$ as an embedding $\iota:\mathcal{L}\hookrightarrow\mathcal{M}$. Since $\mathcal{M}$ preserves $\mathbb{C}_{q,t}[X^{\pm1}]^{\mathbb{Z}_2}\subset\mathbb{C}_{q,t}(X)$, we can view $\mathcal{M}$ as a subset of $\End\mathbb{C}_{q,t}[X^{\pm1}]^{\mathbb{Z}_2}$ and thus view $\mathcal{L}$ as a subset of $\End\mathbb{C}_{q,t}[X^{\pm1}]^{\mathbb{Z}_2}$. On~the other hand, by Proposition~\ref{prop:polyrepS11}, there is an embedding $\Sk_{A,\lambda}(S_{1,1})\rightarrow\End\mathbb{C}_{q,t}[X^{\pm1}]^{\mathbb{Z}_2}$ that maps a generator $x\in\{\alpha,\beta,\gamma\}$ to $\mathcal{O}_x$. Since the elements $\alpha$, $\beta^2$, $\gamma\beta$, and $\gamma^2$ generate $\Sk_{A,\lambda}(S_{1,1})^{\mathbb{Z}_2}$ as a $\mathbb{C}[A^{\pm1},\lambda^{\pm1}]$-algebra, we get an embedding 
\[
\Sk_{A,\lambda}(S_{1,1})^{\mathbb{Z}_2}\hookrightarrow\mathcal{L}\hookrightarrow\mathcal{A}_{q,z}^{\mathbb{C}^*}/(w_1w_2-1)
\]
with the required values on generators. In the same way, one can check that $\iota(q^{(3n+2)/2}z^{-1}\cdot F_1[x^n]E_1[1])$ equals the product of the operator in Lemma~\ref{lem:imageS11curves} and the second operator in~\eqref{eqn:polyrepS11} and that $\iota(q^{(3n-3)/2}z^{-1}\cdot F_1[x^n]E_1[x^{-1}])$ equals the product of the operator in Lemma~\ref{lem:imageS11curves} and the third operator in~\eqref{eqn:polyrepS11}. This proves the second statement.
\end{proof}

\begin{lemma}
\label{lem:S11surjective}
The embedding from Lemma~\ref{lem:explicitembeddingS11} is a $\mathbb{C}$-algebra isomorphism 
\[
\Sk_{A,\lambda}(S_{1,1})^{\mathbb{Z}_2}\cong\mathcal{A}_{q,z}^{\mathbb{C}^*}/(w_1w_2-1).
\]
\end{lemma}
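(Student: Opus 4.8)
The plan is to follow the proof of Lemma~\ref{lem:S04surjective} closely. Injectivity of the embedding is already contained in Lemma~\ref{lem:explicitembeddingS11}, so only surjectivity needs to be checked. Write $\mathcal{A}'_{q,z}\coloneqq\mathcal{A}_{q,z}^{\mathbb{C}^*}/(w_1w_2-1)$. As in the four-holed sphere case, $E_2[f]$ and $F_2[f]$ are the operators $D_1D_2$ and $D_1^{-1}D_2^{-1}$ rescaled by symmetric polynomials and can be recovered from products of the operators $E_1[f]$ and $F_1[f]$, so $\mathcal{A}'_{q,z}$ is generated over $\mathbb{C}[q^{\pm\frac{1}{2}},z^{\pm1}]$ by the $E_1[f]$, the $F_1[f]$, and all symmetric Laurent polynomials in $w_1,w_2$. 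Since $\mathcal{A}_{q,z}$ is graded by total $D$-degree and $\mathbb{C}^*$-invariance picks out the degree-zero part, a spanning set for $\mathcal{A}'_{q,z}$ is given by words in the $E_1[f]$ and $F_1[f]$ with equally many of each, interspersed with symmetric Laurent polynomials in $w_1,w_2$.

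Next I would reduce such a word to normal form exactly as in Lemma~\ref{lem:S04surjective}, which uses only the three families of relations \eqref{eqn:commutepolynomial}, \eqref{eqn:FT}, and \eqref{eqn:changedressing}. The relations \eqref{eqn:commutepolynomial} and the first relation of \eqref{eqn:changedressing} hold verbatim for the Jordan quiver, since their verification uses only the commutation relation $D_{i,r}w_{j,s}=q^{2\delta_{ij}\delta_{rs}}w_{j,s}D_{i,r}$, the shape of $E_1[f]$ and $F_1[f]$ from Definition~\ref{def:monopole}, and the identity $w_1w_2=1$. The commutator relation $[E_1[x^m],F_1[x^n]]=(q-q^{-1})h$ with $h$ a symmetric Laurent polynomial in $w_1,w_2$, and the second relation of \eqref{eqn:changedressing}, must be re-derived for the Jordan quiver, where the single loop replaces the two framing arrows of \eqref{eqn:S04quiver} and alters the factors $\mathcal{P}_{i,\mathcal{I}}$, $\mathcal{Q}_{i,\mathcal{I}}$; this can be done by a direct computation from Definition~\ref{def:monopole} along the lines of \cite{FT19b}. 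Granting these, the same bookkeeping as in Lemma~\ref{lem:S04surjective} expresses any invariant word as a $\mathbb{C}[q^{\pm\frac{1}{2}},z^{\pm1}]$-linear combination of products of blocks $h_1\cdot F_1[x^n]E_1[1]\cdot h_2$ and $h_1\cdot F_1[x^n]E_1[x^{-1}]\cdot h_2$ with $h_1,h_2$ symmetric Laurent polynomials in $w_1,w_2$.

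It then remains to see that every such block lies in the image. Modulo $w_1w_2-1$ the symmetric Laurent polynomials in $w_1,w_2$ over $\mathbb{C}[q^{\pm\frac{1}{2}},z^{\pm1}]$ form the ring $\mathbb{C}[q^{\pm\frac{1}{2}},z^{\pm1}][w_1+w_2]$, because $w_1^{-k}+w_2^{-k}=w_1^k+w_2^k$ and each $w_1^k+w_2^k$ is a polynomial in $w_1+w_2$; since $\alpha$ maps to $w_1+w_2$, every $h_i$ is in the image. By the second part of Lemma~\ref{lem:explicitembeddingS11}, $\gamma_n\beta$ maps to $q^{(3n+2)/2}z^{-1}F_1[x^n]E_1[1]$ and $\gamma_n\gamma$ maps to $q^{(3n-3)/2}z^{-1}F_1[x^n]E_1[x^{-1}]$ for every $n\in\mathbb{Z}$. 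Each $\gamma_n$ is $\mathbb{Z}_2$-odd, by induction on the recursion $\gamma_{m+1}=A\alpha\gamma_m-A^2\gamma_{m-1}$ from the proof of Lemma~\ref{lem:imageS11curves} starting from the odd elements $\gamma_0=\beta$ and $\gamma_1=\gamma$, so $\gamma_n\beta$ and $\gamma_n\gamma$ lie in $\Sk_{A,\lambda}(S_{1,1})^{\mathbb{Z}_2}$; as the prefactors $q^{(3n+2)/2}z^{-1}$ and $q^{(3n-3)/2}z^{-1}$ are images of units of $\mathbb{C}[A^{\pm1},\lambda^{\pm1}]$, we conclude that $F_1[x^n]E_1[1]$ and $F_1[x^n]E_1[x^{-1}]$ are in the image for all $n$. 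Therefore every block is in the image, the embedding is surjective, and the lemma follows.

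The main obstacle is the second step: establishing the Jordan-quiver analogues of the commutator relation $[E_1[x^m],F_1[x^n]]=(q-q^{-1})h$ and of the second relation of \eqref{eqn:changedressing}. The reference \cite{FT19b} works with the linear quiver \eqref{eqn:S04quiver}, whereas here the gauge node carries a loop and no incoming framing arrow, so one must check that the commutator of an $E_1$ with an $F_1$ is still a symmetric Laurent polynomial and that the normal-form recursion is unchanged; this should follow from the same residue computation, but requires care with the modified factors $\mathcal{P}_{i,\mathcal{I}}$ and $\mathcal{Q}_{i,\mathcal{I}}$.
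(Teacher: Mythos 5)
Your overall strategy coincides with the paper's: reduce every $\mathbb{C}^*$-invariant element of $\mathcal{A}_{q,z}'$ to a linear combination of products of blocks $h_1\cdot F_1[x^n]E_1[1]\cdot h_2$ and $h_1\cdot F_1[x^n]E_1[x^{-1}]\cdot h_2$, then observe that Lemma~\ref{lem:explicitembeddingS11} already exhibits both kinds of blocks (and the symmetric polynomials, via $\alpha\mapsto w_1+w_2$) in the image. Your treatment of the final step --- including the remark that each $\gamma_n$ is $\mathbb{Z}_2$-odd so that $\gamma_n\beta$ and $\gamma_n\gamma$ lie in the invariant subalgebra, and that no analogue of the localization/divisibility argument from Lemma~\ref{lem:S04surjective} is needed because $F_1[x^n]E_1[x^{-1}]$ is hit directly by $\gamma_n\gamma$ --- is exactly right.

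The gap is in the relation you propose to use for the reduction. You hope to show that for the Jordan quiver the commutator $[E_1[x^m],F_1[x^n]]$ is still $(q-q^{-1})$ times a symmetric Laurent polynomial in the $w_i$, as in~\eqref{eqn:FT}. This is false in general: for the Jordan quiver the correct identity, which the paper checks directly from Definition~\ref{def:monopole}, is the exchange relation
\[
E_1[x^m]F_1[x^n]=q^{-2(m+n)}F_1[x^{-m}]E_1[x^{-n}],
\]
so that $[E_1[x^m],F_1[x^n]]=q^{-2(m+n)}F_1[x^{-m}]E_1[x^{-n}]-F_1[x^n]E_1[x^m]$ is a difference of two normal-form products with different dressings; its components in nonzero $D$-degree --- equivalently, the $\varpi^{\pm2}$ parts under the embedding of Lemma~\ref{lem:relateambientringsS11} --- do not cancel unless $m=-n$ up to the shift, so it is not a symmetric Laurent polynomial. (The structural reason is that $F_1[f]$ for the Jordan quiver carries no framing factor $\prod(1-qz_{s(a),l}z_aw_{r}^{-1})$, and the loop contributes $z$-dependent factors to both $\mathcal{P}$ and $\mathcal{Q}$, so the residue cancellation from \cite{FT19b} does not go through.) Fortunately this does not sink the argument: the exchange relation reorders $E_1F_1$ into $F_1E_1$ with no correction term at all, so it serves as a drop-in replacement for~\eqref{eqn:FT} in the bookkeeping of Lemma~\ref{lem:S04surjective} and in fact simplifies it. You should therefore replace the claimed commutator relation by this exchange relation and verify it directly; with that substitution your proof is the paper's proof.
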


\begin{proof}
Let us abbreviate $\mathcal{A}_{q,z}'\coloneqq\mathcal{A}_{q,z}^{\mathbb{C}^*}/(w_1w_2-1)$. Then any element of~$\mathcal{A}_{q,z}'$ can be written as a linear combination of products of expressions of the form $h_1\cdot F_1[x^n]E_1[1]\cdot h_2$ and $h_1\cdot F_1[x^n]E_1[x^{-1}]\cdot h_2$ where~$h_1$ and~$h_2$ are symmetric polynomials in the variables~$w_i$. This follows exactly as in the proof of Lemma~\ref{lem:S04surjective}, except that instead of equation~\eqref{eqn:FT}, one uses the more explicit commutation relation $E_1[x^m]F_1[x^n]=q^{-2(m+n)}F_1[x^{-m}]E_1[x^{-n}]$, which can be checked directly using the formulas for the dressed minuscule monopole operators. It follows from Lemma~\ref{lem:explicitembeddingS11} that every expression of the form $h_1\cdot F_1[x^n]E_1[1]\cdot h_2$ or $h_1\cdot F_1[x^n]E_1[x^{-1}]\cdot h_2$ is in the image of the embedding $\Sk_{A,\lambda}(S_{1,1})\hookrightarrow\mathcal{A}_{q,z}'$. Hence this embedding is an isomorphism as claimed.
\end{proof}

\begin{theorem}
\label{thm:S11main}
Let $(\widetilde{G},N)$ be the group and representation associated to a pants decomposition of the one-holed torus~$S_{1,1}$. Then there is a $\mathbb{Z}_2$-action on $\Sk_{A,\lambda}(S_{1,1})$ and a $\mathbb{C}$-algebra isomorphism 
\[
\Sk_{A,\lambda}(S_{1,1})^{\mathbb{Z}_2}\cong K^{\widetilde{G}_{\mathcal{O}}\rtimes\mathbb{C}^*}(\mathcal{R}_{G,N}).
\]
\end{theorem}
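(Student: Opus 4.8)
The plan is to mirror the proof of Theorem~\ref{thm:S04main}, with the Jordan quiver~\eqref{eqn:Jordanquiver} playing the role of the quiver~\eqref{eqn:S04quiver}. First I would unwind the construction of Section~\ref{sec:RepresentationsFromSurfaces} for $S_{1,1}$: its pants decomposition consists of a single pair of pants $P$ two of whose boundary components are glued to one another along a curve $\gamma\in\mathcal{P}$, the third being the boundary component of~$S$. Hence $G=G_\gamma\cong\mathrm{SL}_2(\mathbb{C})$ and $F=F_\partial\cong\mathbb{C}^*$. Since the third boundary component of $P$ lies on $\partial S$, the splitting $M_P=N_P\oplus N_P$ of Section~\ref{sec:RepresentationsFromSurfaces} applies with $N_P=M_\gamma\otimes M_\gamma\cong\mathbb{C}^2\otimes\mathbb{C}^2$, on which $G$ acts by matrix multiplication on each tensor factor and $F$ acts by overall scaling. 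Using the $\mathrm{SL}_2(\mathbb{C})$-equivariant isomorphism $\mathbb{C}^2\cong(\mathbb{C}^2)^*$, I would identify $N=N_P$ with $\Hom(\mathbb{C}^2,\mathbb{C}^2)$ equipped with the conjugation action of $G$ and the scaling action of $F$.

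The key point is then that this is exactly the restriction to $H\times L$ — in the notation of Lemma~\ref{lem:KSL2GL2}, with $m=1$ gauge node, no framing nodes, and one arrow — of the pair $(\widetilde{G}',N)$ coming from the Jordan quiver~\eqref{eqn:Jordanquiver}, where $\widetilde{G}'=\mathrm{GL}_2(\mathbb{C})\times\mathbb{C}^*$; here the arrow-rescaling $\mathbb{C}^*$ on the quiver side is matched with the flavor torus $F=F_\partial$ on the surface side. By Proposition~\ref{prop:generate}, the embedding of Section~\ref{sec:TheAbelianCase} identifies $K^{\widetilde{G}'_{\mathcal{O}}\rtimes\mathbb{C}^*}(\mathcal{R}_{\mathrm{GL}_2,N})$ with $\mathcal{A}_{q,z}$, and since the Jordan quiver has no framing nodes, Lemma~\ref{lem:KSL2GL2} with $m=1$ then gives
\[
K^{\widetilde{G}_{\mathcal{O}}\rtimes\mathbb{C}^*}(\mathcal{R}_{G,N})\cong\mathcal{A}_{q,z}^{\mathbb{C}^*}/(w_1w_2-1),
\]
where the $\mathbb{C}^*$-action is the one simultaneously rescaling $D_1$ and $D_2$ and there are no $z_{k,l}$-relations to impose.

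It remains to invoke Lemma~\ref{lem:S11surjective}, which provides an isomorphism $\Sk_{A,\lambda}(S_{1,1})^{\mathbb{Z}_2}\cong\mathcal{A}_{q,z}^{\mathbb{C}^*}/(w_1w_2-1)$, where the $\mathbb{Z}_2$-action on $\Sk_{A,\lambda}(S_{1,1})$ is the one introduced just before Lemma~\ref{lem:explicitembeddingS11}, sending $\alpha\mapsto\alpha$, $\beta\mapsto-\beta$, $\gamma\mapsto-\gamma$. I would include the short verification that this assignment preserves each of the relations in Proposition~\ref{prop:skeinS11}, so that it does define an algebra involution. Composing the two displayed isomorphisms yields the theorem, with the quantization parameter $q$ identified with $A^{-2}$ and $\lambda$ with $z$, as recorded in Lemmas~\ref{lem:relateambientringsS11} and~\ref{lem:explicitembeddingS11}.

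The substantive content of the argument lies entirely in the lemmas being cited: Lemma~\ref{lem:S11surjective} (surjectivity of the skein-to-Coulomb homomorphism, which ultimately rests on the polynomial representation of the spherical DAHA of type $A_1$) and Lemma~\ref{lem:KSL2GL2} (the localization-theoretic passage from $\mathrm{GL}_2$ to $\mathrm{SL}_2$). Within the proof of the theorem itself, the only step requiring genuine care is the identification of the representation $N$ produced by the surface with the one produced by the Jordan quiver, including the matching of the two copies of $\mathbb{C}^*$; everything else is bookkeeping, and in fact somewhat simpler than in the four-holed sphere case precisely because there are no framing nodes.
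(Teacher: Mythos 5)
Your proposal is correct and follows essentially the same route as the paper's proof: identify the surface data with the Jordan quiver pair $(\widetilde{G}',N)$, apply Proposition~\ref{prop:generate} and Lemma~\ref{lem:KSL2GL2} to get $K^{\widetilde{G}_{\mathcal{O}}\rtimes\mathbb{C}^*}(\mathcal{R}_{G,N})\cong\mathcal{A}_{q,z}^{\mathbb{C}^*}/(w_1w_2-1)$, and conclude by Lemma~\ref{lem:S11surjective}. The extra verification that $\alpha\mapsto\alpha$, $\beta\mapsto-\beta$, $\gamma\mapsto-\gamma$ preserves the relations of Proposition~\ref{prop:skeinS11} is a harmless addition that the paper leaves implicit.
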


\begin{proof}
Any pants decomposition of~$S_{1,1}$ is obtained by gluing together two boundary components of a single pair of pants. From the construction of Section~\ref{sec:RepresentationsFromSurfaces}, we see that the corresponding gauge group is $G=\mathrm{SL}_2(\mathbb{C})$ and that $N=\mathbb{C}^2\otimes\mathbb{C}^2$ is the bifundamental representation of~$G$. The flavor symmetry group $F=\mathbb{C}^*$ acts by rescaling vectors in~$N$. We have $N\cong\Hom(\mathbb{C}^2,\mathbb{C}^2)$ as representations of $\widetilde{G}=G\times F$ where the factor $G$ acts on~$\Hom(\mathbb{C}^2,\mathbb{C}^2)$ by conjugation and $F$ acts by rescaling. We also have an action of $G'=\mathrm{GL}_2(\mathbb{C})$ on~$N$ by conjugation, which extends to an action of $\widetilde{G}'=G'\times F$. Then $(\widetilde{G}',N)$ is precisely the pair associated to the quiver~\eqref{eqn:Jordanquiver}. According to Proposition~\ref{prop:generate}, the embedding constructed in Section~\ref{sec:TheAbelianCase} identifies the Coulomb branch of~$(\widetilde{G}',N)$ with~$\mathcal{A}_{q,\mathbf{z}}$. It follows from Lemma~\ref{lem:KSL2GL2} that 
\[
\mathcal{A}_{q,z}^{\mathbb{C}^*}/(w_1w_2-1)\cong K^{\widetilde{G}_{\mathcal{O}}\rtimes\mathbb{C}^*}(\mathcal{R}_{G,N}),
\]
and hence the desired statement follows from Lemma~\ref{lem:S11surjective}.
\end{proof}

\subsection{Evidence for the main conjecture}
\label{sec:EvidenceFortheMainConjecture}

In view of Theorems~\ref{thm:S03main}, \ref{thm:S04main}, and~\ref{thm:S11main}, we propose that skein algebras and quantized Coulomb branches should be related as in Conjecture~\ref{conj}. As further evidence for this conjecture, we note that skein algebras and quantized $K$-theoretic Coulomb~branches admit similar-looking canonical bases. For the skein algebra of a surface~$S$, one has three closely related canonical bases, each of which is parametrized by multicurves on~$S$; see~\cite{T14} and also~\cite{B23} for some examples considered in the present paper. Given a fixed pants~decomposition of~$S$, one can describe multicurves on~$S$ up to isotopy by integer coordinates known as Dehn--Thurston coordinates~\cite{PH92}. The Dehn--Thurston coordinates of a multicurve consist of a twist coordinate and a nonnegative intersection number for each curve in the pants decomposition.

On the other hand, Cautis and Williams~\cite{CW23} have introduced canonical bases for quantized $K$-theoretic Coulomb~branches. For a theory with gauge group~$G$, the canonical basis for its quantized Coulomb~branch is parametrized by the set $(\Lambda\times\Lambda^\vee)/W$ where $\Lambda$ and $\Lambda^\vee$ are the weight and coweight lattices of~$G$ and $W$ is the Weyl~group acting diagonally. An element of this set determines a vector bundle over the $G_{\mathcal{O}}$-orbit $\Gr_G^\lambda$ in the affine Grassmannian $\Gr_G$, which in turn gives rise to an element of the quantized Coulomb branch.

When $G$ is the gauge group associated to a pants decomposition of the surface $S_{g,n}$, we have $(\Lambda\times\Lambda^\vee)/W=\left(\mathbb{Z}^m\times(2\mathbb{Z})^m\right)/\{\pm1\}$ where $\{\pm1\}$ acts diagonally and $m=3g-3+n$ is the number of curves in the pants decomposition. A beautiful fact, first observed in~\cite{DMO09}, is that an element of the latter set naturally determines a collection of Dehn--Thurston coordinates for a multicurve. Thus it should be possible to explicitly match the canonical bases under the isomorphisms appearing in Conjecture~\ref{conj}.

We note that the $\mathbb{Z}_2$-action in Theorem~\ref{thm:S11main} is the one where the nontrivial element acts on generators of the skein algebra by $\alpha\mapsto\alpha$, $\beta\mapsto-\beta$, $\gamma\mapsto-\gamma$. More generally, in part~\eqref{conj:genus1} of Conjecture~\ref{conj}, we expect that the nontrivial element of~$\mathbb{Z}_2$ maps $\beta\mapsto-\beta$ where $\beta$ is the element of~$\Sk_{A,\boldsymbol{\lambda}}(S)$ corresponding to a simple closed curve that intersects each of the curves in Figure~\ref{subfig:genus1} exactly once. This ensures that the Dehn--Thurston coordinates of the projection of any framed link in $\Sk_{A,\boldsymbol{\lambda}}(S)^{\mathbb{Z}_2}$ lie in the set $(\mathbb{Z}^m\times(2\mathbb{Z}))/\{\pm1\}$.

\bibliographystyle{amsplain}

\end{document}